\let\mathcal\mathscr
\numberwithin{equation}{section}
\newtheorem{theorem}{Theorem}[section]
\newtheorem{lemma}[theorem]{Lemma}
\newtheorem{proposition}[theorem]{Proposition}
\newtheorem{corollary}[theorem]{Corollary}
\newtheorem{conjecture}[theorem]{Conjecture}
\theoremstyle{definition}
\newtheorem*{ack}{Acknowledgements}
\newtheorem{remark}[theorem]{Remark}
\newtheorem{definition}[theorem]{Definition}
\newtheorem{hypothesis}{Hypothesis}
\renewcommand{\d}{\mathrm{d}}
\renewcommand{\rho}{\varrho}
\newcommand{\PP}{\mathbf{P}}
\renewcommand{\AA}{\mathbf{A}}
\newcommand{\FF}{\mathbf{F}}
\newcommand{\ZZ}{\mathbf{Z}}
\newcommand{\QQ}{\mathbf{Q}}
\newcommand{\RR}{\mathbf{R}}
\renewcommand{\leq}{\leqslant}
\renewcommand{\geq}{\geqslant}
\newcommand{\bs}{\boldsymbol}
\newcommand{\x}{\mathbf{x}}
\newcommand{\y}{\mathbf{y}}
\renewcommand{\v}{\mathbf{v}}
\newcommand{\uu}{\mathbf{u}}
\newcommand{\cc}{\mathbf{c}}
\newcommand{\z}{\mathbf{z}}
\newcommand{\w}{\mathbf{w}}
\renewcommand{\b}{\mathbf{b}}
\renewcommand{\a}{\mathbf{a}}
\renewcommand{\r}{\mathbf{r}}
\newcommand{\ve}{\varepsilon}
\newcommand{\e}{\mathbf{e}}
\DeclareMathOperator{\adj}{Adj}
\DeclareMathOperator{\rank}{rank}
\DeclareMathOperator{\vol}{vol}
\DeclareMathOperator{\sing}{Sing}
\DeclareMathOperator{\mat}{Mat}
\DeclareMathOperator{\diag}{Diag}
\DeclareMathOperator{\prim}{prim}
\DeclareMathOperator{\spec}{Spec}
\DeclareMathOperator{\spn}{Span}
\newcommand{\sumstar}{\sideset{}{^*}\sum}
\newcommand{\jacobi}[2]{\left(\frac{#1}{#2}\right)}
\renewcommand{\mod}[1]{\hspace{-2.9mm}\pmod{#1}}
\newcommand{\hf}[1]{h\left(r,#1\right)}
\begin{document}

\title[Counting rational points on cubic hypersurfaces]{On a conjecture of Wooley and lower bounds for cubic hypersurfaces}

\author{V.\ Vinay Kumaraswamy}
\author{Nick Rome}

\address{V.V.K. Department of Mathematics, KTH, 10044 Stockholm \\}
\email{vinay.visw@gmail.com} 

\address{N.R. TU Graz, Institute of Analysis and Number Theory, Kopernikusgasse 24/II, 8010 Graz, Austria.}
\email{rome@tugraz.at}


\begin{abstract}
Let $X \subset \PP_{\QQ}^{n-1}$ be a cubic hypersurface cut out by the vanishing of a non-degenerate rational cubic form in $n$ variables. Let $N(X,B)$ denote the number of rational points on $X$ of height at most $B$. In this article we obtain lower bounds for $N(X,B)$ for cubic hypersufaces, provided only that $n$ is large enough. In particular, we show that $N(X,B) \gg B^{n-9}$ if $n \geq 39$, thereby proving a conjecture of T. D. Wooley for non-conical cubic hypersurfaces with large enough dimension.
\end{abstract}

\subjclass[2010]{11D45, 11P55, 14G05, 11D25, 11N36}
\keywords{Cubic hypersurfaces, Circle method, Quadratic forms, Geometry of numbers, Sieves, Fibrations}

\date{\today}

\maketitle

\setcounter{tocdepth}{1}
\tableofcontents

\section{Introduction}

Let  $C(x_1,\ldots,x_n) \in \ZZ[x_1,\ldots,x_n]$ be a cubic form with integer coefficients and let $X \subset \PP^{n-1}_{\QQ}$ denote the hypersurface cut 
out by the vanishing of $C$. Determining whether $X$ has a $\QQ$-rational point is a fundamental problem in number theory. This article
studies the distribution of the set of rational points $X(\QQ)$ for cubic hypersurfaces that have a $\QQ$-rational point. 
Since $C$ is a cubic form, it is clear that 
$X(\RR)\neq \emptyset.$ If $n \geq 10$, Demyanov~\cite{D50} and Lewis~\cite{L52} have 
shown that $X(\QQ_p) \neq \emptyset$ for each prime $p$. While these are clearly necessary conditions to ensure that $X(\QQ) \neq \emptyset$, a
folklore conjecture states that these conditions are also sufficient. 
\begin{conjecture}\label{conjfolklore}
Let $X\subset \PP_{\QQ}^{n-1}$ be a cubic hypersurface with $n \geq 10$. Then $X(\QQ)\neq \emptyset$.
\end{conjecture}

Although Conjecture~\ref{conjfolklore} is still unresolved, the circle method has been used to make
partial progress. Heath-Brown~\cite{HB07} has shown that 
$X(\QQ)\neq \emptyset$ provided that $n \geq 14$. 
Further, if $X$ is non-singular, a landmark result of Heath-Brown~\cite{HB83} shows that $X$ has a $\QQ$-rational point if $n \geq 10$. For non-singular 
hypersurfaces, another result by Hooley~\cite{Hooley88} has shown that if $n \geq 9$, then $X$ has a $\QQ$-rational point provided that $X$ has a 
rational point over $\QQ_p$ for each
prime $p$.

Turning to the distribution of rational points, Koll\'ar~\cite{k02} has shown that if $X$ is geometrically integral and not a cone, then $X/\QQ$ is unirational provided
that $X(\QQ) \neq \emptyset$. In particular, this implies that $X(\QQ)$ is Zariski dense in $X$. Thus it makes sense to define the counting function
$$
N(X,B) = \#\left\{\x \in X(\QQ) : H(\x) \leq B\right\},
$$
where $H(\x)$ denotes the usual naive height function on $\PP_{\QQ}^{n-1}(\QQ)$. 

In this article, we obtain lower bounds for $N(X,B)$ for general cubic hypersurfaces $X \subset \PP_{\QQ}^{n-1}$. 
Past work has primarily obtained lower bounds 
for $N(X,B)$ under additional hypotheses on $X$. Hooley~\cite{Hooley88} has shown
that if $n - \sigma \geq 10$,
then 
\begin{equation*}
N(X,B) \geq c_XB^{n-3},
\end{equation*}
where $c_X$ is a constant that depends only on $X$ and $\sigma$ is the dimension of the singular locus of $X$. Moreover, $c_X > 0$ if $X$ 
has a non-singular point over every completion of $\QQ$. 

Asymptotic formulae for $N(X,B)$ are known to hold for cubic hypersurfaces $X$ cut out 
by the vanishing of cubic forms $C$ with large enough $h$-invariant, 
a quantity which we will define below. Davenport
and Lewis~\cite{DL64} have shown that there exists $\delta > 0$ such that
\begin{equation}\label{eq:nxbasymptotic}
N(X,B) = cB^{n-3}+O(B^{n-3-\delta})
\end{equation}
holds if the $h$-invariant of $C$ is at least $17$. By combining the approach 
in~\cite{DL64} with~\cite{HB07}, one can deduce the following result, a proof of which can be found in~\cite{B23}.

\begin{theorem}\label{proph14intro}
Let $X \subset \PP_{\QQ}^{n-1}$ be a cubic hypersurface cut out by the vanishing of $C(\x)$ such that its $h$-invariant is at least $14$. Then there exists $\delta > 0$ such that the 
asymptotic formula~\eqref{eq:nxbasymptotic} holds.
\end{theorem}

In addition to the results mentioned above, generalising classical work of Birch, Davenport and Lewis~\cite{BDL62}, Schindler and Skorobogatov~\cite{SS14} have 
also obtained asymptotic 
formulae for $N(X,B)$ for the family
of hypersurfaces cut
out by the vanishing of cubic forms of the shape
$$
b_1N_{K_1/\QQ}(x_{1},x_{2},x_{3}) + b_2N_{K_2/\QQ}(x_{4},x_{5},x_{6}) + b_3N_{K_3/\QQ}(x_{7},x_{8},x_{9})
$$
where $b_i \in \ZZ$ and $N_{K_i/\QQ}$ are norm forms that arise from cubic extensions $K_i$ of $\QQ$. For $X$ associated to cubic forms 
of the shape $\sum_{i=1}^4 \Phi_i(x_i,y_i)$, where $\Phi_i$ are non-singular integral binary cubic forms, Br\"udern and Wooley~\cite{BW98} have obtained
lower bounds for $N(X,B)$.
In more recent work, Liu, Wu and Zhao~\cite{LWZ19} have 
obtained an asymptotic formula for $N(X_n,B)$ for the family of hypersurfaces  
$$
X_n: z^3 - y(x_1^2+\ldots+x_n^2) = 0,
$$
for $n \equiv 0 \bmod{4}$. If $X$ is cut out by the vanishing of a diagonal cubic form, then subject to local solubility conditions,  
an asymptotic formula for $N(X,B)$ follows from work of Vaughan~\cite{V86} if $n \geq 8$. In addition, 
lower bounds can be deduced if $n \geq 7$ by adapting the methods in~\cite{V89}. 

As a result,  
existing estimates for $N(X,B)$ are conditional on hypotheses on the singularities of $X$, on the dimension of linear subspaces in $X$, or
for hypersurfaces cut out by cubic forms that have a specific shape. 

Although there has been extensive work in understanding the counting function $N(X,B)$ for lower dimensional cubic hypersurfaces,
this will not be the focus of our present work. We refer the interested reader to
work of Blomer, Br\"{u}dern and Salberger~\cite{BBS14} on cubic fourfolds, work of de la Bret\`eche~\cite{dlB07} on Fano threefolds 
and the book~\cite[\S 2.3]{Br09} for an overview of results on cubic surfaces.

We will now define the $h$-invariant, denoted $h = h(C) = h(X)$, which will play an important role in our work. The $h$-invariant
is defined to be the smallest integer $k$ such that there exist linear forms $l_1,\ldots,l_k$  and quadratic forms $f_1,\ldots,f_k$ with integer coefficients
satisfying
$$
C(\x) = l_1(\x)f_1(\x) + \ldots + l_k(\x)f_k(\x).
$$
Equivalently, $h$ is the codimension of the largest $\QQ$-linear subspace contained in the affine cone over $X$. Then $0 \leq h \leq n$.
It is clear that $h = 0$ if and only if $C$ is identically $0$ and that 
$X(\QQ)\neq \emptyset$ if and only if $h \neq n$. Therefore, if $B$ is large enough, we see that 
\begin{equation}\label{eq:htrivial}
N(X,B) \geq 
\begin{cases}
1 &\mbox{ if $n = h+1,$} \\
c_{n-h}B^{n-h} &\mbox{if $n \geq h+2,$}
\end{cases}
\end{equation}
for some $c_{n-h} > 0$ that depends only on $X$.

Note that~\eqref{eq:htrivial} along with~\cite[Theorem 1]{HB07} and Theorem~\ref{proph14intro} imply the following `trivial' estimate.
\begin{theorem}\label{cortrivial}
Suppose that $X \subset \PP_{\QQ}^{n-1}$ is a cubic hypersurface with $n \geq 14$. Then if $B$ is large enough, we have
$$
N(X,B) \gg 
\begin{cases}
1 &\mbox{ if $n = 14,$} \\
B^{n-13} &\mbox{ if $n \geq 15$.}
\end{cases}
$$
\end{theorem}

In Section~\ref{sectionsketch} we will state our main result, which will improve on Theorem~\ref{cortrivial}. 
A simple probabilistic heuristic leads us to expect that $N(X,B) \approx B^{n-3}$. However,  
this can fail for general cubic hypersurfaces. 
For example, if $C(\x) = x_1x_2^2+x_3(x_4^2+\ldots+x_n^2)$,  
we clearly have $N(X,B) \gg B^{n-2}$. 
It is also possible to construct badly degenerate hypersurfaces $X$ that have far fewer points than predicted by the probabilistic heuristic. Consider the following example of Wooley~\cite{Wooley99}. 

Let $p$ be a fixed prime number and let $\FF_p$ denote the field with cardinality $p$. Let $K/\FF_p$ be a cubic extension with basis $\left\{\omega_1,\omega_2,\omega_3\right\}$ and consider the cubic polynomial $$
\overline{N}(x_1,x_2,x_3) = N_{K/\FF_p}(\omega_1x_1+\omega_2x_2+\omega_3x_3),
$$
where $N_{K/\FF_p}:K \to \FF_p$ denotes the norm map. Let
$N(x_1,x_2,x_3)$ be any lift of $\overline{N}$ to the integers. Define 
$$
F(x_1,\ldots,x_9) = N(x_1,x_2,x_3) + pN(x_4,x_5,x_6) + p^2N(x_7,x_8,x_9).
$$
Then it is easy to verify that if $F(x_1,\ldots,x_9) = 0$, we must have that $x_i = 0$. Let $L_1(\x),\ldots,L_9(\x)$ be $\QQ$-linearly independent 
linear forms with integer coefficients in $n \geq 9$ variables. Define
$$
C(\x) = F(L_1(\x),\ldots,L_9(\x)).
$$
If $X$ is the hypersurface cut out by the vanishing of $C$, then it follows that $N(X,B) \leq c B^{n-9}$, for some constant $c$ that depends only on $X$. Based on this example, Wooley~\cite{Wooley99} has made the following conjecture on the distribution of rational points on $X$, which may
be seen as a quantitative strengthening of Conjecture~\ref{conjfolklore}.

\begin{conjecture}[T. D. Wooley]\label{trevor}
Let $n \geq 10$ be an integer. Let $X\subset \PP_{\QQ}^{n-1}$ be a cubic hypersurface. Then there exists a constant $c > 0$ depending only on $X$ such that $N(X,B) \geq  cB^{n-9}$ as $B \to \infty$. 
\end{conjecture}

In this article, we prove Wooley's conjecture if $n$ is large enough and if $X$ is not a cone.
\begin{theorem}\label{thmw}
Suppose that $X \subset \PP_{\QQ}^{n-1}$ is a non-conical cubic hypersurface with $n \geq 39$. Then $N(X,B) \gg B^{n-9}$, where the implied
constant depends only on $X$.
\end{theorem}
In fact, one can do better for most non-conical cubic hypersurfaces $X$ with large enough dimension. 
Except for a specific family of hypersurfaces, we will show (see Theorem~\ref{cor1}
in the next section) for any $\delta > 0$ that $N(X,B) \gg B^{n-7-\delta}$ if $n \gg \delta^{-1}$. 

It would be desirous to remove the assumption in Theorem~\ref{thmw} that $X$ is non-conical, but this appears to be a 
challenging problem. However, if $X$ is cut out by the vanishing of a non-degenerate rational cubic form in
$n-l$ variables, then Wooley's conjecture holds trivially if $n-l \leq 9$ and from Theorem~\ref{thmw} if $n-l \geq 39$.

\subsection*{Notation}
By $\AA^h$ we will denote affine $h$-space. All implicit constants will be allowed to depend on the cubic form $C$ and the weight functions that will appear in the work. Any further dependence will be indicated by an appropriate subscript.

\begin{ack}
We would like to thank Adelina M\^{a}nz\u{a}ţeanu for her helpful inputs in the early stages of writing this paper. 
We are grateful to Tim Browning for bringing this problem to our attention, for his encouragement and for providing detailed comments on 
previous drafts
of this article. We would also like to thank Anish Ghosh, Jakob
Glas and Shuntaro Yamagishi for helpful conversations. 

Much of this work was done while V.V.K was a Visiting Fellow at the School of Mathematics in TIFR Mumbai, where he was supported by
a DST Swarnajayanti fellowship. He is currently supported by the grant KAW 2021.0282 from the Knut and Alice Wallenberg Foundation. N.R
is supported by Austrian Science Fund (FWF) project ESP 441-NBL. Part of this work was supported by the Swedish Research Council under grant
no. 2021-06594 while V.V.K participated in the programme in Analytic Number Theory at the Institute Mittag-Leffler in 2024.
\end{ack}

\section{Overview of the proof}\label{sectionsketch}

In this section, we will give an outline of the proof of Theorem~\ref{thmw}. We begin by stating the following theorem, which improves on~\eqref{eq:htrivial} if $h \geq 8$ and
if $n$ is large enough in terms of $h$, and also show that it is sufficient to prove Theorem~\ref{thmw}.
\begin{theorem}\label{thmlb}
Let $X \subset \PP_{\QQ}^{n-1}$ be a non-conical cubic hypersurface cut out by the vanishing of a rational cubic form $C$ with $h$-invariant 
$h(C) \geq 8$. Assume that $n \geq h+17$. Let $\ve > 0$ be a small fixed real number. Set
\begin{align*}
\beta(n,h,\ve) &= \frac{2(h-1)(n-h-7)}{3(n-h) - 3} - 2 - \ve.
\end{align*}
and
$$
\alpha(n,h,\ve) = \min\left\{\frac{h-5}{2},\frac{2h-12}{3}, \beta(n,h,\ve)\right\}.
$$
Then we have
$$
N(X,B)  \gg_{\ve} B^{n-h+\alpha(n,h,\ve)}.
$$
\end{theorem}
To see that Theorem~\ref{thmw} can be deduced from Theorem~\ref{thmlb}, observe that Theorem~\ref{thmw} stems from~\eqref{eq:htrivial} if 
$h \leq 9$ and from Theorem~\ref{proph14intro} if $h \geq 14$. As a result, it suffices to restrict to the case where $h \in \left\{10,11,12,13\right\}$. For $h$ in this range, the estimate $N(X,B) \gg B^{n-9}$ is an immediate
consequence of Theorem~\ref{thmlb}, provided that $n \geq 39$. 

Theorem~\ref{thmlb} is the main technical result of our article, and it appears
to be the first result that obtains non-trivial lower bounds for general cubic hypersurfaces. We will give an overview of its proof below. We start by introducing the counting function 
we utilise in the proof.

Define 
\begin{equation*}
N(B) = \#\left\{\x \in \ZZ_{\prim}^n \cap B[-1,1]^n : C(\x) = 0\right\}.
\end{equation*}
As $N(B) = 2N(X,B)$, in order to obtain a lower bound for $N(X,B)$, it will suffice to study the counting function $N(B)$.

\subsection{The fibration method}
The fibration argument, which is at the heart of the proof of Theorem~\ref{thmlb}, is similar to an idea that was used
by Watson~\cite{W67} to study 
the solubility of cubic equations in at least $19$ variables. A similar approach was also deployed by Swarbrick Jones~\cite{Mike'} in his
work on weak approximation.

Recall that $h$ is the smallest integer such that $C(\x)$ can be written in the following form:
$$C(\z) = l_1(\z)f_1(\z) + \ldots + l_h(\z)f_h(\z),$$
where $l_i$ and $f_i$ are non-zero linear and quadratic forms respectively with rational coefficients. 
After making a change of variables, we may represent $C(\z)$ by 
$$
C(\z) = z_1g_1(\z) + \ldots + z_hg_h(\z),
$$ where $g_i$ are non-zero quadratic forms. Note that this (or any other) linear change of variables will only affect the implied constant 
in the lower bound we will obtain for $N(B)$, which is allowed to depend on $C$. 

Set $\z = (\x,\y)$ where $\x = (x_1,\ldots,x_{n-h})$ and $\y = (y_1,\ldots,y_h)$ so that 
\begin{equation}\label{eq:cubicxy}
\begin{split}
C(\z) = C(\x,\y) &= y_1g_1(\x,\y) + \ldots + y_h g_h(\x,\y) \\ 
&= \sum_{i=1}^hy_iF_i(\x) + \sum_{j=1}^{n-h}x_jq_j(\y)+R(\y),
\end{split}
\end{equation}
for quadratic forms $F_i(\x)$ and $q_j(\y)$ and a cubic form $R(\y)$.

Let $C$ be as in~\eqref{eq:cubicxy}. Let $Z \subset \AA^n$ denote the (affine) hypersurface cut out by the equation $C = 0$. Let
\begin{equation}\label{eq:map}
\begin{split}
\pi: Z& \to \AA^h \\
(\x,\y) &\mapsto \y
\end{split}
\end{equation}
be the projection map. Then the fibre over a point $\y \in \AA^h$ is given by
$$
Z_{\y}:  \sum_{i=1}^hy_iF_i(\x) + \sum_{j=1}^{n-h}x_jq_j(\y)+R(\y) = 0.
$$

Associated to each cubic hypersurface is the following invariant, which will play a key role in our fibration argument. 
\begin{definition}\label{genericrank}
Let $Z_{\eta}$ denote the generic fibre of the morphism $\pi$, where $\eta$ is the generic point in $\AA^h$. 
The rank of the fibration $\pi$, denoted $r = r(C)$, is defined to be the rank of the quadratic part of $Z_\y$.
\end{definition} 
In other words, $r$ is the rank of the quadratic form
$\sum_{i=1}^h y_iF_i(\x)$ over the algebraic function field $\QQ(y_1,\ldots,y_h)$. More concretely, it is the order of the largest minor of $M[\y]$, the matrix 
associated with $\sum_{i=1}^h y_iF_i(\x)$, which does not vanish identically in $\y$. Note that $0 \leq r \leq n-h$. 

If $r$ is the rank of $\pi$, then for `typical' $\y \in \ZZ^h$, we will have that $\rank_{\QQ} \sum_{i=1}^h y_iF_i(\x) = r$. Let 
$$
F_\y(\x) = C(\x,\y) \text{ and } Q_\y(\x) = \sum_{i=1}^h y_iF_i(\x),
$$
so that the fibre of $\pi$ over 
$\y$ is nothing but
$Z_\y : F_\y(\x) = 0.$ When $r$ is `large', 
we will use use the circle method to count points on the affine quadric $Z_\y$ and sum over $\y$ to get a lower bound for $N(B)$, i.e.,
\begin{equation}\label{eq:74430}
N(B) \geq \sum_{|\y| \leq B}\sum_{\substack{|\x| \leq B  \\ (\x,\y)=1 \\ F_\y(\x) = 0}}1.
\end{equation}
For this strategy to work, we must ensure that the equation $F_\y(\x)=0$ is soluble for sufficiently many $|\y| \leq B$, and also
obtain estimates for counting solutions to $F_\y = 0$ that are uniform in $\y$. To tackle the 
latter problem, we will use the smooth $\delta$-function form of the circle method~\cite[Theorem 1]{HB96}. 

We will encounter contrasting behaviour based on the real solubility of $Q_\y(\x)$, which we will now describe. 
If for each $\y$, the quadratic
form $Q_\y(\x)$ is semidefinite, then
we will show (see Lemma~\ref{lemmawt}) that $Q_\y(\x) = l(\y)F(\x)$, where $l$ is a linear form and $F$ is a semidefinite quadratic form. 
Our problem then reduces to counting solutions to a given definite quadratic form by `completing the square' to remove terms that are linear in $\x$ in
~\eqref{eq:cubicxy}, while ensuring that the resulting equation is soluble over $\ZZ$
for sufficiently many $\y \in B[-1,1]^n \cap \ZZ^n$. Using this approach, we will prove the following result in Section~\ref{proofthma1}.
\begin{theorem}\label{thma1}
Let $C(x_1,\ldots,x_n)$ be a rational cubic form as in~\eqref{eq:cubicxy} with $h=h(C)$ and let
$Q_\y(\x) = \sum_{i=1}^hy_iF_i(\x)$. 
Let $X \subset \PP_{\QQ}^{n-1}$ denote the cubic hypersurface $C = 0$.
Suppose that 
$Q_\y(\x) = l(\y)F(\x)$, with $l$ a linear form and $F$ a semidefinite quadratic form of rank equal to $r \geq 5$, where $r$ is the rank of the fibration $\pi$. 
Set 
$$
\gamma(n,h,r) = \min\left\{\frac{h-5}{2},\frac{r-n+3h-8}{3}\right\}.
$$ 
Then we have
$$
N(X,B) \gg B^{n-h+\gamma(n,h,r)}.
$$
\end{theorem}

On the other hand, if $Q_\y$ is not as in Theorem~\ref{thma1}, we proceed as follows. 
Let $1 \leq Y \leq B$ be a parameter. We will construct a non-empty compact set $\Omega_{\infty}$ 
such that whenever $\y \in \ZZ^h \cap Y\Omega_{\infty}$, the quadratic form $Q_\y(\x)$
is indefinite and
the product of its non-zero eigenvalues has absolute value $\gg Y^r$, where $r = r(C)$ is the rank of $\pi$. In particular, this product is, up to a constant, 
as large as it can be. This is because the eigenvalues of $Q_\y$ are $O(Y)$, as the matrix associated to $Q_\y$ has coefficients
that are $O(Y)$. Furthermore, if the set of minors of order $3$ in $M[\y]$ have
no common factor, we will show in Section~\ref{ekedahlapplication} using an 
application of the Ekedahl
sieve~\cite{E91} that for a positive proportion of integer vectors $\y \in Y\Omega_{\infty}$,  
the equation $F_{\y}(\x) = 0$ has non-singular solutions modulo $p$ for each prime $p \gg_C 1$. Combining this with the fact that
the smooth points of $X(\QQ_p)$ are Zariski dense in $X$, we deduce that a positive proportion of fibres of $\pi$ are 
everywhere locally soluble. Hence by the Hasse principle for affine quadrics, we get that for a positive proportion of $\y \in \ZZ^h \cap Y\Omega_{\infty}$, 
the equation $F_\y = 0$ is soluble over $\ZZ$. 

Next, we deploy the circle method, as in work of the first author~\cite{VVK24}, to show for a smooth function $w$ that
\begin{equation}\label{eq:introspiel}
\sum_{F_\y(\x) = 0}w(B^{-1}\x) = \sigma_{\infty}(w,\y)\mathfrak{S}(\y)B^{n-h-2} + O(\mathcal{E}(B,Y)),
\end{equation}
where $\sigma_{\infty}(w,\y)$ and $\mathfrak{S}(\y)$ are the `singular integral', which measures the density of real solutions
to the equation $F_\y = 0$ and the `singular series', which measures the density of $p$-adic solutions for each prime $p$ and $\mathcal{E}(B,Y)$ 
is the error term. It is important here that the error term is uniform in the coefficients of $F_\y$ and 
the parameter $Y$ is chosen to ensure that the main term exceeds the error term. We also remark that the error term in~\eqref{eq:introspiel}
grows larger as $r$ becomes smaller.  

In order to sum over $\y$ in~\eqref{eq:74430} using~\eqref{eq:introspiel}, we will need good lower bounds for both $\sigma_{\infty}(w,\y)$ as well 
as for $\mathfrak{S}(\y)$. However, having
ensured that the absolute value of the eigenvalues of $Q_\y$ is $\gg Y^r$, and that $F_\y$ has a smooth solution modulo $p$ for each $p \gg_C 1$,
we can deduce that $\sigma_{\infty}(w,\y)\mathfrak{S}(\y) \gg Y^{-1-\ve}$.  
This allows us to conclude that
$$
N(B) \gg B^{n-h-2}Y^{h-1-\ve},
$$
for $Y=B^{\delta}$, and $\delta = \delta(n,h,r) > 0$. 

It still remains to treat the case where the minors of order $3$
in $M[\y]$ have a common factor, in which case we can no longer apply the Ekedahl sieve. Nevertheless, we are
able to obtain optimal bounds for $\mathfrak{S}(F_\y)$ on average. To do this, we begin by characterising those cubic forms that have this property and
then we use exponential sum techniques over finite fields to handle the resulting counting problems.
These are some of the key ideas that go into the proof of the following result, which is the subject of Section~\ref{proofthma2}. 
\begin{theorem}\label{thma2}
Let $C(x_1,\ldots,x_n)$ be a rational cubic form as in~\eqref{eq:cubicxy} with $h = h(C) \geq 6$ and let
$Q_\y(\x) = \sum_{i=1}^hy_iF_i(\x)$. 
Let $X \subset \PP_{\QQ}^{n-1}$ 
denote the cubic hypersurface $C = 0$. Let $r$ denote the rank of the fibration $\pi$ and assume that $r \geq \min\left\{5,n-h-4\right\}$. 
Suppose that $Q_\y(\x) \neq l(\y)F(\x)$, for any linear form $l$ and any semidefinite
quadratic form $F$ of rank $r$.
Let $\ve > 0$ be a small fixed real number. Set
\begin{align*}
\delta(n,h,r,\ve) = \begin{cases} \frac{2(r-4)(h-1)}{r+2(n-h)}-2-\ve &\mbox{ if $r$ is even and $2r-m < 8$,} \\
\frac{2(r-3)(h-1)}{r+2(n-h)+1}-2-\ve &\mbox{ otherwise.}
\end{cases}
\end{align*}
Then we have
$$
N(X,B) \gg_{\ve} B^{n-h+\delta(n,h,r,\ve)}.
$$
\end{theorem}

This leaves us with those hypersurfaces 
for which $r \leq n- h- 5$. The following result allows us to extract additional information regarding such $C$.
\begin{lemma}\label{lemma4w}
Let $C(\x,\y)$ be as in~\eqref{eq:cubicxy}. If $r=r(C) < n-h$ then after
a linear change of variables, $C$ is linear in $n-h-r$ of the $x_i$. 
\end{lemma}
\begin{proof}
We will argue as in the proof of~\cite[Lemma 3]{W67}.
 Let $M_i$ denote the matrix associated with the quadratic
form $F_i$. Since $r$ is the rank of the fibration $\pi$, at least one of the matrices $M_i$ must have rank equal to $r$. We may assume without loss of generality that $\rank(M_1) = r$. Furthermore, we can make a change of variables and ensure that 
$M_1 =\diag (a_1,\ldots,a_r,0,\ldots,0)$ is a diagonal matrix. Then we may write
\begin{equation*}
M[\y] = \begin{pmatrix}
\sum_{i = 1}^{h}y_iN_{11}^{(i)} & \sum_{i = 2}^{h}y_iN_{12}^{(i)} \\
\sum_{i = 2}^{h}y_iN_{21}^{(i)} & \sum_{i = 2}^{h}y_iN_{22}^{(i)}
\end{pmatrix}, 
\end{equation*} 
with $N_{11}^{(j)}$ matrices of order $r$, such that $N_{11}^{(1)} = M_1$ and $N_{22}^{(j)}$ matrices of order $n-h-r$.

To show that $C(\x,\y)$ is linear in $x_i$ with $r+1 \leq i \leq n-h$ it suffices to show that $\frac{\partial^2 C}{\partial x_i x_j} = 0$ if $r+1 \leq  i,j \leq n-h$, or in other words that each matrix $N_{22}^{(j)}$ has all its elements equal to $0$.

By hypothesis, any minor of order $r+1$ in $M[\y]$ must vanish identically in $\y$. Consider in particular the $(r+1)\times (r+1)$ submatrices of 
$M[\y]$ of the form
$$
\begin{pmatrix}
\sum_{i=1}^h y_iN_{11}^{(i)} & \uu \\
\v & w
\end{pmatrix},
$$
for any column $\uu$ of $\sum_{i = 2}^{h}y_iN_{12}^{(i)}$, any row $\v$ of $\sum_{i = 2}^{h}y_iN_{21}^{(i)}$ and any matrix entry 
$w \in \sum_{i = 2}^{h}y_iN_{22}^{(i)}$, obtained by bordering $\sum_{i=1}^h y_i N_{11}^{(i)}.$ In the determinant of each of these matrices, consider the coefficients of $y_1^{r}y_j$, where $j \geq 2$. These terms constitute the determinant of the matrix 
$$
\begin{pmatrix}
M_1 & 0 \\
0 & \sum_{i=2}^hy_iN_{22}^{(i)}
\end{pmatrix}.
$$
As a result, there are no linear terms (in the $y_j$, with $j \geq 2$) in any minor of order $r+1$ of the above matrix. Since $\det M_1 \neq 0$, we have $N_{22}^{(i)} = 0$ and the lemma follows.
\end{proof}

Observe that if $r = n-h-k$, then $C$ is linear in $k$ variables. As a result, after relabelling the variables, we can write
\begin{equation}\label{eq:cxylinearink}
C(\x,\y) = x_1Q_1(\y)+\ldots+x_kQ_k(\y) + S(\y),
\end{equation}
where $\y = (y_1,\ldots,y_{n-k})$, $Q_i$ are quadratic forms and $S$ is a cubic form. Let $Z \subset \AA^{n}$ denote the hypersurface $C = 0$. 
Consider the morphism 
\begin{equation}\label{eq:fibrationpi'}
\begin{split}
\pi': Z &\to \AA^{n-k} \\
(\x,\y) &\mapsto \y.
\end{split}
\end{equation}
Then the fibres of $\pi'$ are the affine hyperplanes
$$
Z_\y : x_1Q_1(\y)+\ldots+x_kQ_k(\y) + S(\y) = 0.
$$
For $C$ as in~\eqref{eq:cxylinearink}, we obtain near optimal lower bounds for $N(X,B)$.
\begin{theorem}\label{thmb}
Let $C(x_1,\ldots,x_n)$ be a rational cubic form in $n$ variables such that $h(C) \geq 8$. 
Suppose that $C(\x)$ is of the shape~\eqref{eq:cxylinearink} with 
$k \geq 5$. 
Assume that $C(\x)$ is non-degenerate in at least $5$ of the variables $x_1,\ldots,x_k$. 
Let $X \subset \PP_{\QQ}^{n-1}$ denote the cubic hypersurface $C = 0$. Then for any $\ve > 0$, we have
$$
N(B) \gg_{\ve} B^{n-3-\ve}.
$$
\end{theorem}

To prove Theorem~\ref{thmb}, we use the morphism $\pi'$ to count points on $X$. Let 
$$
l_\y(\x) = C(\x,\y).
$$
The fibres of $\pi'$ are given by $
Z_\y : l_\y(\x) = 0.
$
Our task then will be to count solutions to $l_\y(\x) = 0$ with $|\x| \leq B$ for each fixed 
$\y = (y_1,\ldots,y_k)$ with $|\y| \leq Y$ for some parameter $1 \leq Y \leq B$. 

If the quadratic forms $Q_i$ in~\eqref{eq:cxylinearink} have no common linear factor, 
we will once again show using the Ekedahl
sieve
in Section~\ref{ekedahlapplication} that a positive 
proportion of fibres of $\pi'$ have
an integer solution. If this is not the case, we will use a Lang--Weil type estimate for cubic forms
due to Davenport and Lewis~\cite{DL62} to reach a similar conclusion. 

To count solutions to $l_\y(\x) = 0$, we adapt work of Thunder~\cite{thunder} to obtain (see Section~\ref{secthunder})
an asymptotic formula
whose error term depends on the successive minima of the lattice
$$
\Lambda_\y = \left\{\x \in \ZZ^k : \sum_{i=1}^kx_iQ_i(\y) = 0\right\}.
$$
It will be important to show that the first successive minimum of $\Lambda_\y$ is `large' on average. 
In this endeavour, we are led to calculating the ranks of certain quadric bundles
over algebraic function fields (see Section~\ref{secranksffields}). Using this and bounds towards the Dimension Growth Conjecture, 
we complete the proof of Theorem~\ref{thmb}, which is the carried out in Section~\ref{secproofb}.

With Theorems~\ref{thma1},~\ref{thma2} and~\ref{thmb} at hand, Theorem~\ref{thmlb} follows from Lemma~\ref{lemma4w} by the following argument.
Let $r$ denote the rank of the fibration $\pi$ in~\eqref{eq:map}. Suppose that $n-h-4 \leq r \leq n-h$. Then Theorem~\ref{thmlb} will follow from 
Theorem~\ref{thma1} if $Q_{\y}(\x) = l(\y)F(\x)$, for some linear form $l$ and semidefinite quadratic form $F$. If this is not the case, Theorem~\ref{thmlb}
follows from the lower bound in Theorem~\ref{thma2}, as $h \geq 8$ by assumption. 
Finally, if $r \leq n-h-5$, then $C(\x)$ is linear in five or more variables by Lemma~\ref{lemma4w},
whence $C(\x)$ is of the shape in~\eqref{eq:cxylinearink}. By our assumption that $C$ is non-degenerate, we may appeal to Theorem~\ref{thmb}
to complete the proof of Theorem~\ref{thmlb}.

We end this section by highlighting the following result, which is a consequence of Theorems~\ref{thma2} and~\ref{thmb}. 
\begin{theorem}\label{cor1}
Let $C(x_1,\ldots,x_n)$ be a non-degenerate cubic form as in~\eqref{eq:cubicxy} and let
$Q_\y(\x) = \sum_{i=1}^hy_iF_i(\x)$. Assume that $Q_\y(\x) \neq l(\y)F(\x)$, for any linear form $l$ and any semidefinite
quadratic form $F$ of rank $r$. Let $X \subset \PP_{\QQ}^{n-1}$ denote the hypersurface defined by $C = 0$. Let $\delta > 0$. Then if $n \gg \delta^{-1}$, 
then 
$$
N(X,B) \gg_{\delta} B^{n-7-\delta}.
$$
\end{theorem}
\begin{proof}
By Theorem~\eqref{eq:htrivial} and~\ref{proph14intro} it suffices to prove this for cubic hypersurfaces with $8 \leq h \leq 13$. 
As $Q_\y \neq l(\y)F(\x)$, 
we may apply Theorem~\ref{thma2} if the rank of $\pi$ is at least $n- h - 4$, and Theorem~\ref{thmb} otherwise. 
In the former case, observe that for each fixed $h$, $\gamma(n,h,\ve) \to 2(h-1)/3-2-\ve$ as $n \to \infty$. 
As a result, if $n \gg \delta^{-1}$, we get that $N(X,B) \gg B^{n-h-2+\frac{2(h-1)}{3}-\delta} \gg B^{n-7-\delta}$ and the corollary follows.
\end{proof}
Theorem~\ref{cor1} is the limit of our method, and it would be interesting to prove that a similar lower bound also holds for cubic hypersurfaces
as in the statement of Theorem~\ref{thma1}. We end by remarking that with additional work, it is possible to remove the hypothesis $h \geq 8$ in Theorem~\ref{thmb} by 
appealing to~\cite{Browning13} instead of~\cite[Theorem 3]{DL62}. However, we have decided not to do this here, in light of the estimate in Theorem~\ref{cor1}.

\section{Preliminaries}

\subsection{Cubic forms over $\QQ_p$}

In the proof of Theorems~\ref{thmsolubility2} and~\ref{thmsolubility}, we will need the following technical lemma, which asserts the existence
of non-singular $\ZZ_p$ points on the fibres $Z_\y$ we will encounter. The lemma is identical to~\cite[Lemma 4]{W67}. However, we 
record a proof below (for $C$ that is absolutely irreducible) that was suggested to us by Tim Browning.
\begin{lemma}\label{lemma0}
Let $C(x_1,\ldots,x_l,y_{l+1},\ldots,y_n)$ be a cubic polynomial with integer coefficients in $n$ variables such that
the partial derivatives $\frac{\partial C}{\partial x_i}$ do not all vanish identically. 
Assume that $C$ is irreducible. Let $p$ be a prime and assume that $C=0$ has a 
non-singular solution over $\QQ_p$. 
Then 
there exists $v = v_p\geq 1$ and $(\x,\r) \in \left(\ZZ/p^{2v-1}\ZZ\right)^n$ such that
$$
C(\x,\r) \equiv 0 \bmod{p^{2v-1}} \text{ and } \left(\frac{\partial C(\x,\r)}{\partial x_1},\ldots, \frac{\partial C(\x,\r)}{\partial x_l}\right) \not\equiv \mathbf{0} \bmod{p^v}.
$$
\end{lemma}
\begin{proof}
Given the existence of a non-singular solution the equation $C = 0$ over $\QQ_p$, it follows from the $p$-adic implicit function theorem
that
the $\QQ_p$ solutions to $C=0$ are Zariski dense (see, for example,~\cite[Lemma 3.4]{BDHB15}. The lemma
follows from our assumption that the partial derivatives $\frac{\partial C}{\partial x_i}$ do not all
vanish identically. 
\end{proof}

\subsection{Non-singular solutions to polynomial equations}
In this section, we will study the existence of non-singular solutions to quadratic polynomials. We begin
with the following lemma, which is a generalisation of~\cite[Lemma 1]{BD08}.
\begin{lemma}\label{lemmaevalgauss}
Let $p \neq 2$ be a prime. Let 
$$
F(x_1,\ldots,x_r) = Q(x_1,\ldots,x_r) + \sum_{i=1}^r B_ix_i + N
$$ 
be a quadratic polynomial with 
integer coefficients. Suppose that $Q$ is of rank $r$ and that $p$ does not divide the discriminant of $Q$. Let
$M$ denote the matrix associated with $Q$ and let $\bs{B} = (B_1,\ldots,B_r)$. Let $\ve_p = 1$ if $p \equiv 1 \bmod{4}$ and equal to $i$ if 
$p \equiv 3 \bmod{4}$. Then we have
\begin{align*}
\#\left\{\x \in \FF_p^m: F(\x) = 0 \mbox{ and } \nabla F(\x) \neq \mathbf{0}\right\} &= p^{m-1} +\ve_p^r \jacobi{\det M}{p} \times \\
&\quad p^{\frac{r}{2}-1}K_r(4N-\bs{B}^tM^{-1}\bs{B},p) \\
& - \kappa_p,
\end{align*}
where
\begin{equation}\label{eq:evalexpsums}
\begin{split}
K_r(w;p) =\begin{cases} \ve_p\jacobi{w}{p}p^{1/2}&\mbox{ if $r$ is odd,}\\
p-1 &\mbox{ if $p \mid w$ and $r$ is even,}\\
-1&\mbox{ if $p \nmid w$ and $r$ is even}
\end{cases}
\end{split}
\end{equation}
and
$$
\kappa_p = 
\begin{cases} 1 &\mbox{ if $p \mid 4N-\bs{B}^tM^{-1}\bs{B},$} \\
0 &\mbox{ otherwise.}
\end{cases}
$$
\end{lemma}

\begin{proof}
Let $\mathcal{N}^*$ denote the cardinality of non-singular solutions to the equation $F(\x) = 0$ in $\FF_p$. Then we have
\begin{align*}
\mathcal{N}^* = \frac{1}{p}\sum_{\substack{\x \bmod{p} \\ \nabla F(\x) \neq \mathbf{0}}}\sum_{a \mod{p}}e_p(aF(\x)).
\end{align*}
The condition $\nabla F(\x) \neq \mathbf{0}$ implies that $2M\x \not\equiv -\bs{B} \bmod{p}$, i.e., 
$\x \not\equiv -\overline{2}M^{-1}\bs{B} \bmod{p}$. Here, and in the rest of the paper, for $c \in \FF_p^*$, $\overline{c}$
will denote the multiplicative inverse of $c$. As 
$$
F(-\overline{2}M^{-1}\bs{B}) \equiv 
-\overline{4}\bs{B}^tM^{-1}\bs{B} + N \bmod{p},
$$
we have that
\begin{align*}
\mathcal{N}^* &= \frac{1}{p}\sum_{a \mod{p}}\sum_{\substack{\x \bmod{p}}}e_p(aF(\x)) \\
&\quad \quad - \frac{1}{p}\sum_{a \mod{p}}e_p(-a(4N - \bs{B}^tM^{-1}\bs{B}) \\
&= \frac{1}{p}\sum_{a \mod{p}}\sum_{\substack{\x \bmod{p}}}e_p(aF(\x)) - \kappa_p.
\end{align*}

To analyse the first sum, we recall that there exists a matrix 
$R$ such that $R^tMR = \diag (A_1,\ldots,A_r)$ with with the property that $p \nmid \det R$ and
$p \nmid A_i$. Put $\bs{D}= R^t\bs{B}.$ Replacing $\x$ with $R^t\x$, we get that
\begin{align*}
\mathcal{N}^* &= p^{m-1} + \frac{1}{p}\sumstar_{a \mod{p}}e_p(aN)\prod_{i=1}^r\sum_{x_i \mod{p}}e_p(a(A_ix_i+D_ix_i)) - \kappa_p \\
&= p^{m-1} + \ve_p^r \jacobi{\det M}{p} p^{\frac{r}{2}-1} \sumstar_{a \mod{p}}\jacobi{a}{p}^re_p(a(N - \overline{4}\sum_{i=1}^r \overline{A_i}D_i^2) \\
&\quad \quad -\kappa_p,
\end{align*}
where $\sum^*$ denotes restriction to coprime residue classes modulo $p$. 
The lemma follows from observing that 
\begin{align*}
\sum_{i=1}^r \overline{A_i}D_i^2 &= \bs{D}^t\diag(\overline{A_1},\ldots,\overline{A_r})D \\
&= \bs{B}^tRR^{-1}M^{-1}(R^t)^{-1}R^t\bs{B} = \bs{B}^tM^{-1}\bs{B}.
\end{align*}
The evaluation of $K_r(w;p)$ in~\eqref{eq:evalexpsums} is well-known and this completes the proof of the lemma.
\end{proof}
As an application of the preceding lemma, we have 
\begin{lemma}\label{lemmasolvequadricsmodp}
Let $p \neq 2$ be a prime. Let $F(x_1,\ldots,x_m) = Q(x_1,\ldots,x_m) + L(x_1,\ldots,x_m) + N$ be a quadratic polynomial with integer coefficients,
 where $Q(\x)$ is a quadratic form of rank at least $3$ over $\FF_p$, $L(\x)$ is a linear form and $N$ is an integer. Then 
$$
\#\left\{\x \in \FF_p^m: F(\x) = 0 \mbox{ and } \nabla F(\x) \neq \mathbf{0}\right\} = p^{m-1} + O(p^{m-2}).
$$
\end{lemma}
\begin{proof}
Since $p \neq 2$, we can diagonalise $Q$ over $\FF_p$. As a result, we may assume without loss of generality that 
$$
F(\x) = \sum_{i=1}^rA_ix_i^2+\sum_{j=1}^m B_ix_i + N,
$$ 
with $r \geq 3$, $p \nmid A_1,\ldots,A_r$ and $p \mid A_{r+1},\ldots,A_m$.  Let $\mathcal{N}^*$ denote the number of solutions $\x \in \FF_p$ to the equation $F(\x) = 0$ such that
$\nabla F(\x) = (2A_1x_1+B_1,\ldots,2A_nx_m+B_m) \not\equiv \mathbf{0} \bmod{p}$. Since $p \mid A_i$ for $i > r$, we see that $\nabla F(\x) \not\equiv \mathbf{0} \bmod{p}$ if and only if 
$$
(2A_1x_1+B_1,\ldots,2A_rx_r+B_r,B_{r+1},\ldots,B_m) \not\equiv \mathbf{0} \bmod{p}.
$$
Observe that if at least one of $B_{r+1},\ldots,B_m$ is coprime to $p$, $B_{r+1}$, say, then fixing all the other $x_i$ fixes $x_{r+1}$, whence it follows 
that $\mathcal{N}^* = p^{m-1}$ in this case. As a result, we may assume for the rest of the proof that $p \mid B_{r+1},\ldots,B_n$, in which case, we have
$
F(\x) = \sum_{i=1}^r (A_ix_i^2+B_ix_i)+ N
$
over $\FF_p$.

Consequently, we get from Lemma~\ref{lemmaevalgauss} that
\begin{align*}
\mathcal{N}^* &= p^{m-1} \\
&\quad + \ve_p^r\jacobi{\prod_{i=1}^rA_i}{p}p^{m-\frac{r}{2}-1}K_r(4N-\sum_{i=1}^r\overline{A_i}B_i^2;p) - \kappa_p. \\
\end{align*}
The lemma follows from~\eqref{eq:evalexpsums}, since $r \geq 3$.
\end{proof}

Next we will now record a well-known result due to Davenport on lifting non-singular solutions from 
prime moduli to congruences modulo prime powers.
\begin{lemma}\label{lemmadavhensel}
Let $G(x_1,\ldots,x_m)$ be a polynomial equation with integer coefficients. Suppose that
\begin{equation*}\label{eq:hyplemmalbforsfy1}
\#\left\{\x \in \FF_p^m : G(\x) = 0 \text{ and } \nabla G(\x) \neq \mathbf{0}\right\}  \geq p^{m-1} + O(p^{m-2})
\end{equation*}
Then for all $t \geq 1$ we have 
$$
\#\left\{\x \bmod{p^t}: G(\x) \equiv 0 \bmod{p^t}\right\} \geq p^{t(m-1)} + O(p^{t(m-1)-1}).
$$
Suppose that the set
\begin{equation*}\label{eq:hyp2lemmalbforsfy1}
\begin{split}
\left\{ \x \bmod{p^{2v_p-1}} \right.: 
&\left.G(\x) \equiv 0 \bmod{p^{2v_p-1}} \text{ and } \nabla G(\x) \not\equiv \mathbf{0} \bmod{p^{v_p}}\right\}
\end{split}
\end{equation*} 
is non-empty for some $v_p \geq 1$.
Then for all $t \geq 2v_p-1$ we have
$$
\#\left\{\x \bmod{p^t}: G(\x) \equiv 0 \bmod{p^t}\right\} \geq p^{-(2v_p-1)(m-1)}p^{t(m-1)} .
$$
\end{lemma}
\begin{proof}
This follows from~\cite[Lemma 17.1]{Davenport05}.
\end{proof}

\subsection{Calculating the rank of certain quadrics over function fields}\label{secranksffields}
Let $v \geq 4$ be an integer. Let $K$ denote the function field  $\QQ(x_1,\ldots,x_v)$ in $m$ variables. Let $\psi_1(\y),\ldots,\psi_v(\y)$ be 
integral quadratic forms. Set 
$$
\Psi(\x,\y) = x_1\psi_1(\y) + \ldots + x_v\psi_v(\y).
$$
Over $K$, the polynomial $\Psi$ is a quadratic form in $\y$. Note that we may diagonalise $\Psi$ over $K$. As a result, we have $\Psi(\x,\y) = \sum_{i=1}^r u_i(\x)y_i^2$, where $u_i(\x) \in K$ and $r = \rank_K \Psi$ is an invariant of $\Psi$. In this section, we will show that $\Psi$ is geometrically irreducible over $K$ provided that $\Psi(\x,\y)$ is irreducible over $\QQ$ and that $\Psi$ is non-degenerate in the $x_i$ variables. The following result is well-known. 
\begin{lemma}\label{quadred}
Suppose that $Q(\z)$ is a quadratic form over an algebraically closed field $k$. Then $Q(\z)$ is reducible if and only if $\rank Q \leq 2$.
\end{lemma}

Write $\Psi(\x,\y) = \sum_{1 \leq i,j \leq m}a_{i,j}(\x)y_iy_j$ with $a_{i,j}(\x)$ linear forms in $\x$ with integer coefficients. Suppose that $r = 2$. 
This implies that $\rank \psi_i \leq 2$ for each $1 \leq i \leq v$ and $\rank \psi = 2$ for some $i$. 
Assume without loss of generality that $\rank \psi_1 = 2$. Then
by making a linear change of variables, we may assume that $\psi_1(\y) = a_1y_1^2+a_2y_2^2$, for some non-zero
integers $a_1$ and $a_2$. 
Furthermore, by Lemma~\ref{lemma4w} we see that $a_{i,j}(\x)$ is identically zero if $i,j \geq 3$. Also observe that
$a_{k,l}(\x)$ is independent of $x_1$ whenever $k \neq l$. 

We clearly have $a_{1,1}(\x) \in K^*$. Completing the square, we get
\begin{equation}\label{eq:temp1125}
\begin{split}
\Psi(\x,\y) &= a_{1,1}(\x)\left(y_1 + \frac{a_{1,2}(\x)}{2a_{1,1}(\x)}y_2 + \ldots + \frac{a_{1,n}(\x)}{2a_{1,1}(\x)}y_m\right)^2 + \\
&\quad \quad G(\x,y_2,\ldots,y_m),
\end{split}
\end{equation}
where
\begin{align*}
G(\x,y_2,\ldots,y_m) &= \sum_{2\leq i,j \leq m}\left(a_{i,j}(\x) - \frac{a_{1,i}(\x)a_{1,j}(\x)}{4a_{1,1}(\x)}\right)y_iy_j.
\end{align*}
Set 
\begin{align*}
\Delta_{i,j}(\x) = 4a_{1,1}(\x)a_{i,j}(\x) - a_{1,i}(\x)a_{1,j}(\x).
\end{align*}
Note that $a_{1,i}(\x)/2a_{1,1}(\x), \Delta_{i,j}(\x) \in K$ and $\Delta_{i,j}(\x) = -a_{1,i}(\x)a_{1,j}(\x)$ if $i,j \geq 3$, as 
$a_{i,j}(\x) = 0$ whenever $i,j \geq 3$. Observe that $\rank_K \Psi = 2$ if and only if $\rank_K G = 1$. Suppose that this is the case. As
$a_{1,2}(\x)$ is independent of $x_1$, we find that $\Delta_{2,2}(\x) \in K^*$. As a result, we may write
\begin{align}\label{eq:temp1126}
G(\x,y_2,\ldots,y_n) = \frac{\Delta_{2,2}(\x)}{4a_{1,1}(\x)}\left(y_2+\frac{\Delta_{2,3}(\x)}{2\Delta_{2,2}(\x)}y_3+\ldots+\frac{\Delta_{2,n}(\x)}{2\Delta_{2,2}(\x)}y_m\right)^2.
\end{align}
This implies that
\begin{align}\label{eq:relationwithdeltas}
4\Delta_{2,2}(\x)\Delta_{i,j}(\x) = \Delta_{2,i}(\x)\Delta_{2,j}(\x)
\end{align}
for each $i,j \geq 3$. The above equation constrains the possibilities for $a_{i,j}(\x)$, as we show below.
\begin{lemma}\label{lemmaslow0}
Suppose that ~\eqref{eq:relationwithdeltas} holds for each $3 \leq i, j \leq m$. If $\Delta_{2,i}(\x) = 0$ for some $i \geq 3$, then 
$a_{1,i}(\x) = a_{2,i}(\x) = 0.$
\end{lemma}

\begin{proof}
Observe that if $\Delta_{2,i}(\x) = 0$ for some $i \geq 3$, then appealing to~\eqref{eq:relationwithdeltas} with $j = i$, we get that
$
4\Delta_{2,2}(\x)\Delta_{i,i}(\x) = 0$. As $\Delta_{2,2}(\x) \in K^*$, we find that $\Delta_{i,i}(\x) = 0$, which in turn implies that
$a_{1,i}(\x) = 0$. As a consequence, we get that $\Delta_{2,i}(\x) = 4a_{1,1}(\x)a_{2,i}(\x) = 0$, whence we also get that $a_{2,i}(\x) = 0$. 
The lemma follows.
\end{proof}

\begin{lemma}\label{lemmaslow}
Suppose that~\eqref{eq:relationwithdeltas} holds for each $3 \leq i, j \leq m$. Suppose that
$\Delta_{2,i}(\x) \neq 0$ for some $3 \leq i \leq m$. Then $a_{1,2}(\x) = 0$ and there exists $\kappa_i \in \QQ$ such that $a_{2,i}(\x) = \kappa_i a_{1,i}(\x)$
and $a_{2,2}(\x) = -\kappa_i^2a_{1,1}(\x)$.
\end{lemma}
\begin{proof}
As $i \geq 3$, we have that $\Delta_{i,i}(\x) = - a_{1,i}^2(\x)$. 
Therefore, applying~\eqref{eq:relationwithdeltas} with $j = i$ we get that $\Delta_{2,i}^2(\x) = -4\Delta_{2,2}(\x)a_{1,i}^2(\x).$ This implies that
there exists a linear form $l_i(\x)$ such that $\Delta_{2,i}(\x) = a_{1,i}(\x)l_i(\x)$ and $-4\Delta_{2,2}(\x) = l_i^2(\x)$. Note that by our hypothesis 
that $\Delta_{2,i} \in K^*$, we must have that $a_{1,i},l_i \in K^*$.

Since
$a_{1,i}(\x)$ divides $\Delta_{2,i}(\x) = 4a_{1,1}(\x)a_{2,i}(\x) - a_{1,2}(\x)a_{1,i}(\x)$ and since $a_{1,i}(\x)$ is 
independent of $x_1$, we see that $a_{1,i}(\x) \mid a_{2,i}(\x)$ over $\QQ$. 
This implies that
$a_{2,i}(\x) = \kappa_i a_{1,i}(\x)$, for some $\kappa_i \in \QQ$, which implies that $l_i(\x) = 4\kappa_i a_{1,1}(\x) - a_{1,2}(\x)$. 

Inserting this into
the relation $l_i^2(\x) = -4\Delta_{2,2}(\x)$, we get that
$$
(4\kappa_ia_{1,1}(\x)-a_{1,2}(\x))^2 = -4(4a_{1,1}(\x)a_{2,2}(\x) - a_{1,2}(\x)^2).
$$
Reducing modulo the linear polynomial $a_{1,1}(\x)$ in $\ZZ[\x]$, we see that $a_{1,2}(\x)$ is divisible by $a_{1,1}(\x)$, whence we get that
$a_{1,2}(\x) = 0$, as $a_{1,2}(\x)$ is independent of $x_1$. This shows that
$l_i(\x) = 4\kappa_ia_{1,1}(\x)$, which implies that $\Delta_{2,2}(\x) = -4\kappa_i^2a_{1,1}^2(\x)$. From this we obtain the
relation $a_{2,2}(\x) = -\kappa_i^2a_{1,1}(\x)$, which completes the proof of the lemma.
\end{proof}

We are now ready to characterise all $\Psi(\x,\y)$ such that $r = 2$.
\begin{lemma}\label{lemmarank2overk}
Let $\psi_i(\y)$ be integral quadratic forms and set $\Psi(\x,\y) = \sum_{i=1}^v x_i\psi_i(\y)$. Suppose that $\rank_K \Psi(\x,\y) = 2$, 
where $K = \QQ(\x)$. Then after a $\QQ$-linear change of variables, $\Psi$ is equal to 
\begin{equation}\label{eq:option1}
\begin{split}
a_{1,1}(\x)y_1^2 + a_{1,2}(\x)y_1y_2 + a_{2,2}(\x) y_2^2,  
\end{split}
\end{equation}
or
\begin{equation}\label{eq:exps2psi}
\begin{split}
(y_1+ \kappa y_2)\left\{a_{1,1}(\x)(y_1-\kappa y_2) + \sum_{i=3}^l a_{1,i}(\x)y_i\right\},
\end{split}
\end{equation}
where $\kappa \in \QQ$, $3 \leq l \leq m$ is an integer and $a_{i,j}(\x)$ are linear forms in $\ZZ[\x]$. 
\end{lemma}
\begin{proof}
Write $\Psi(\x,\y) = \sum_{1 \leq i,j \leq m}a_{i,j}(\x)y_iy_j$ with $a_{i,j}(\x)$ linear forms in $\x$ with integer coefficients. 
As $\rank_K \Psi = 2$, equations~\eqref{eq:temp1125},
~\eqref{eq:temp1126} and~\eqref{eq:relationwithdeltas} hold. Also recall that we may assume that
$a_{i,j}(\x) = 0$ whenever $i,j \geq 3$. 

We may assume for the rest of the proof that $\Psi(\x,\y)$, considered as an integral quadratic form (in $\y$),
has at least $3$ variables. For otherwise, one can write $\Psi(\x,\y) = a_{1,1}(\x)y_1^2 + a_{1,2}(\x)y_1y_2 + a_{2,2}(\x) y_2^2$, where $a_{i,j}(\x)$ 
are linear forms, which is of the shape in~\eqref{eq:option1}. Our task then will be to show that $\Psi$ is of the shape~\eqref{eq:exps2psi}.

Suppose first that $\Delta_{2,i}(\x) = 0$ for each $i \geq 3$. Then we get 
from Lemma~\ref{lemmaslow0} and~\eqref{eq:temp1125} and~\eqref{eq:temp1126} that $\Psi(\x,\y)$ depends only on $y_1, y_2$. 
However, by the argument in the previous
paragraph, this contradicts the non-degeneracy assumption on $\Psi$. Thus there exists at least one $i \geq 3$ such that $\Delta_{2,i}(\x) \neq 0$.

Let $S \subset \left\{3, \ldots, m\right\}$ denote the set of indices $s$ such that $\Delta_{2,s}(\x) \neq 0$. Then
$S \neq \emptyset$. For ease of notation, we will assume that
$S = \left\{3, \ldots, l\right\}$ for some $3 \leq l \leq m$. Then by 
Lemmas~\ref{lemmaslow0} we get that $a_{1,j}(\x) = a_{2,j}(\x) = 0$ for each $j > l$. As $S$ is non-empty, we get 
from Lemma~\ref{lemmaslow} that $a_{1,2}(\x) = 0$.

Let $i \in S$. By Lemma~\ref{lemmaslow} we get that
 and that there exists $\kappa_i \in \QQ$ such that
$a_{2,i}(\x) = \kappa_i a_{1,i}(\x)$ and $a_{2,2}(\x) = -\kappa_i^2 a_{1,1}(\x)$. If $|S| \geq 2$, then by taking $i,j \in S$ with $i \neq j$, 
we see that $\kappa_i = \kappa_j = \kappa$, say. 

As a result, we see get that there exists $\kappa \in \QQ$ such that
\begin{equation*}
\begin{split}
\Psi(\x,\y) &= a_{1,1}(\x)y_1^2 + a_{1,2}(\x)y_2^2 + \sum_{i=3}^l a_{1,i}(\x) y_1y_i+\sum_{i=3}^l a_{2,i}(\x) y_2y_i \\
&= a_{1,1}(\x)\left(y_1^2-\kappa^2 y_2^2\right) + (y_1+\kappa y_2) \sum_{i=3}^l a_{1,i}(\x)y_i \\
&= (y_1+\kappa y_2)\left\{a_{1,1}(\x)(y_1-\kappa y_2) + \sum_{i=3}^l a_{1,i}(\x)y_i\right\},
\end{split}
\end{equation*}
which completes the proof of the lemma.
\end{proof}
\begin{proposition}\label{lemmairrfnfd}
Let $v \geq 4$. Suppose that $\Psi(\x,\y) = x_1\psi_1(\y)+\ldots+x_v\psi_v(\y)$ is irreducible over $\QQ$ and non-degenerate in the variables $x_i$. Then $\Psi$ is geometrically integral over $K = \QQ(x_1,\ldots,x_v)$. 
\end{proposition}
\begin{proof}
Let $r=\rank_K \Psi$, as before. By Lemma~\ref{quadred}, it will suffice to show that $r \geq 3$. Since $\Psi$ is irreducible over $\QQ[\x,\y]$, we may 
invoke Gauss’s lemma to conclude that $\Psi$ is irreducible over $K$, whence $r \geq 2$ and $\Psi$ is geometrically reduced. If
$r = 2$, Lemma~\ref{lemmarank2overk} shows that $\Psi$ is equal to~\eqref{eq:option1} or equal to~\eqref{eq:exps2psi}. However,
the former case is not possible as $v \geq 4$ and $\Psi$ is non-degenerate in the $x_i$. The latter case is also not possible as $\Psi$ is assumed
to be irreducible over $\QQ$. This shows that $r \geq 3$ and the proposition follows.
\end{proof}

\section{Solubility of fibres of $\pi$ and $\pi'$}\label{ekedahlapplication}

\subsection{Introduction}
Let $C(\x,\y)$ be as in~\eqref{eq:cubicxy} or in~\eqref{eq:cxylinearink} and let $\pi$ and $\pi'$ be the morphisms in~\eqref{eq:map} 
and~\eqref{eq:fibrationpi'} respectively with fibres $Z_\y$. In this section, we will show using the Ekedahl sieve~\cite{E91}
 that a positive proportion of fibres $Z_\y$ (a notion we will make precise below), of
$\pi$ and $\pi'$ have non-singular $\ZZ_p$ points for each prime $p$. Since the fibres of $\pi$ and $\pi'$ are cut out by the vanishing 
of equations of degree $1$ or $2$, 
the Hasse principle holds. If we assume 
that these fibres are also soluble over $\RR$, we obtain solubility over $\ZZ$. The main results are as follows. 
 
\begin{theorem}\label{thmsolubility2}
Set $\x = (x_1,\ldots,x_{n-k})$ and $\y = (y_1,\ldots,y_k)$. Let
\begin{equation}\label{eq:cxy2}
C(\x,\y):\sum_{i=1}^ky_iQ_i(\x)+\sum_{j=1}^{n-k}x_jq_j(\y) + R(\y),
\end{equation}
be an irreducible cubic form with integer coefficients, with $Q_i$ and $q_j$ quadratic forms and $R$ a 
cubic form. Suppose that $C$ has a non-singular solution over $\QQ_p$ for each prime $p$.
Let $Z$ denote the hypersurface cut out by $C = 0$
and let $\pi$ be as in~\eqref{eq:map} with fibres $Z_\y$. Assume that $\rank \sum_{i=1}^ky_iQ_i(\x)$ over the function
field $\QQ(\y)$ is at least $5$ and that the minors of order $3$ of
$M[\y]$, the matrix associated to the quadratic form $\sum_{i=1}^ky_iQ_i(\y)$, have no common factor over $\ZZ[\y]$. Let 
$\Omega_{\infty} \subset \RR^{n-k}$ be a compact set.

Then there exists an integer $M$ such that for each $Y \geq 1$ there exists a set $\mathcal{C}_k(Y) \subset Y\Omega_{\infty} \cap \ZZ^k$ 
such that the following statements hold.
\begin{enumerate}
\item Let $\y \in \mathcal{C}_k(Y)$. If $p \mid M$, there exists an integer $v_p = v_p(C)$ that depends only on $C$ such that there
exists $\x \bmod{p^{2v_p-1}}$ with the property that $C(\x,\y) \equiv 0 \bmod{p^{2v_p-1}}$
and $\frac{\partial C(\x,\y)}{\partial x_i} \not\equiv 0 \bmod{p^v}$ for some $1 \leq i \leq n-k$. If $p \nmid M$, we have
\begin{equation}\label{eq:436315}
\begin{split}
\#\left\{\x \mod{p} : C(\x,\y) \right.&\left.\equiv 0 \bmod{p}, \x \text{ non-singular} \right\} \\
&= p^{n-k-1} + O(p^{n-k-2}),
\end{split}
\end{equation}
where the implied constant is independent of $p$. 
\item If $\y \in \mathcal{C}_k(Y)$, the fibre $Z_\y$ over $\y$ has a smooth point over $\ZZ_p$ for each prime $p$.
\item There exists a constant $\sigma = \sigma(C, \Omega_{\infty}) > 0$ such that 
\begin{align*}
\lim_{Y \to \infty} \frac{\#\mathcal{C}_k(Y)}{Y^{k}} \to \sigma.
\end{align*}
\end{enumerate}
In particular, if $Y$ is large enough, then we have 
\begin{equation*}
\mathcal{C}_k(Y) \gg Y^k.
\end{equation*}
\end{theorem}

\begin{theorem}\label{thmsolubility}
Set $\x = (x_1,\ldots,x_{n-k})$ and $\y = (y_1,\ldots,y_k)$. Let 
\begin{equation}\label{eq:cxy}
C(\x,\y):x_1Q_1(\y)+\ldots+x_{n-k}Q_{n-k}(\y)+R(\y)
\end{equation}
be an irreducible cubic form with integer coefficients, with $Q_i(\y)$ quadratic forms and $R(\y)$ a cubic form. Assume that $Q_1(\y),\ldots,Q_{n-k}(\y)$
have no common factors. Suppose that $k, n-k \geq 2$, that $C$ is irreducible and that $C$ has a non-singular zero over 
every completion $\QQ_p$. Let $Z$ denote the hypersurface cut out by $C = 0$
and let $\pi'$ be as in~\eqref{eq:fibrationpi'}.  Let $\Omega_{\infty} \subset \RR^k$ be a compact set such that for any $Y\geq 1$, we have that the fibre $Z_\y$ of $\pi'$ over
$\y$ has a real point for any 
$\y \in Y\Omega_{\infty}$.

Then for each $Y \geq 1$ there exists a set $\mathcal{C}_k(Y) \subset Y\Omega_{\infty} \cap \ZZ^k$ such that the following statements hold.
\begin{enumerate}
\item For any $\y \in \mathcal{C}_k(Y)$, the fibre $Z_\y$ of $\pi'$ over $\y$ has a point over $\ZZ$ and 
$\gcd\, (Q_1(\y),\ldots,Q_5(\y)) \ll_C 1.$
\item There exists a constant $\sigma = \sigma(C, \Omega_{\infty}) > 0$ such that 
\begin{align*}
\lim_{Y \to \infty} \frac{\#\mathcal{C}_k(Y)}{Y^{k}} \to \sigma.
\end{align*}
\end{enumerate}
In particular, if $Y$ is large enough, then we have 
\begin{equation*}\label{eq:corlb}
\mathcal{C}_k(Y) \gg Y^k.
\end{equation*}
\end{theorem}

\subsection{Proof of Theorem~\ref{thmsolubility2}}
Let $\phi_1,\ldots,\phi_J$ denote the minors of order $3$ of $M[\y]$. Define the variety 
\begin{equation}\label{eq:444315}
\mathcal{L} \subset \AA^k : \phi_1(\y) = \ldots = \phi_J(\y) = 0.
\end{equation}
Then by hypothesis, $\mathcal{L}$ is codimension at least $2$ in $\AA^k$. We will now
define certain subsets $\Omega_p \subset \ZZ_p^k$ for each prime $p$. 

Let $M'$ denote the product of the primes that divide all the coefficients of each of the $\phi_i$ in~\eqref{eq:444315}. Set $M = 2M'$. 
Lemma~\ref{lemma0} shows that for each $p \mid M$ there exists
an integer $v_p \geq 1$ and 
$\r \bmod{p^{2v_p-1}}$ such that the equation 
$C(\x,\r) \equiv 0 \bmod{p^{2v_p-1}}$ is soluble with the proviso that $\frac{\partial C(\x,\r)}{\partial x_i} \not \equiv 0 \bmod{p^v}$
for some $1 \leq i \leq n-k.$ 
Define for $p \mid M$
\begin{align*}
\Omega_p = \left\{\y \in \ZZ_p^k : \y \equiv \r \bmod{p^{2v_p-1}}\right\}.
\end{align*}
As $\Omega_p$ is defined by a congruence condition, it follows that $\mu_p(\Omega_p) > 0$, where
$\mu_p$ is the normalised Haar measure on $\ZZ_p.$ If $p \nmid M$, put
$$
\Omega_p = \left\{\y \in \ZZ_p^k : \y \bmod{p} \not\in \mathcal{L}(\FF_p)\right\},
$$
with $\mathcal{L}$ as in~\eqref{eq:444315}.
Observe that if $\y \in \Omega_p$, then 
$\y + p\ZZ_p^k \subset \Omega_p$, whence $\mu_p(\Omega_p) > 0$, where $\mu_p$ is the normalised Haar measure of $\ZZ_p$.

Set 
$$
\mathcal{C}_k(Y) = \left\{\y \in \ZZ^k \cap Y\Omega_{\infty}: \y \in \Omega_p\text{ for all primes }p \right\},
$$
where $\Omega_{\infty}$ is as in the statement of the theorem. 
Since $\mathcal{L}$ is of codimension at least $2$, we see that~\cite[Equation (3.4)]{BBL16} holds. We have also
verified that $\mu_p(\Omega_p) > 0$ for each prime $p$. Thus the hypotheses of~\cite[Proposition 3.2]{BBL16} are all satisfied, whence
\begin{align*}
\lim_{Y \to \infty} \frac{\#\mathcal{C}_k(Y)}{Y^{k}} \to \sigma,
\end{align*}
for some $\sigma = \sigma(\Omega_\infty,C) > 0$. This proves the third statement of the theorem. We will now prove
the first two statements.

Let $\y \in \mathcal{C}_k(Y)$. If $p \mid M$, we get from the construction of $\Omega_p$ that $C(\x,\y) \equiv 0 \bmod{p^{2v_p-1}}$
and $\frac{\partial C(\x,\y)}{\partial x_i} \not\equiv 0 \bmod{p^v}$ for some $1 \leq i \leq n-k$. For $p \nmid M$, the
fact that $\y \bmod{p} \in \Omega_p$ implies that 
$\rank_{\FF_p} M[\y] \geq 3$. Therefore, the estimate~\eqref{eq:436315}
follows from an application of Lemma~\ref{lemmasolvequadricsmodp}. This proves the first statement of the theorem. 
The second statement follows from invoking Hensel's lemma, and the theorem follows.

\begin{remark}\label{remgcd}
Let $\mathcal{C}_k(Y)$ be as in the statement of the preceding theorem and let $\y = (y_1,\ldots,y_k) \in \mathcal{C}_k(Y)$. 
Set $g = \gcd(y_1,\ldots,y_k)$. Then observe that for $p \mid M$, the $p$-adic valuation, $v_p(g) \leq 2v_p-1$, where $v_p$ is
as in the definition of $\Omega_p$. In addition, we also have $p \nmid g$ for each prime $p \nmid M$, by construction of $\Omega_p$. 
Thus we get that $\gcd(y_1,\ldots,y_h) \ll_C 1$ for any $\y \in \mathcal{C}_k(Y)$.
\end{remark}
\subsection{Proof of Theorem~\ref{thmsolubility}}
In order to apply Ekedahl's sieve to study the solubility of~\eqref{eq:cxy}, we will require the following lemma.

\begin{lemma}\label{lemma2}
Let $k$. Suppose that $Q_1(\y),\ldots,Q_{n-k}(\y)$ have no common factors. Let 
$\mathcal{L} \subset \AA^k : Q_1(\y) = \ldots = Q_{n-k}(\y) = 0$. Suppose that $Q_i(\y)$ have no common linear factors. Suppose that $\y \in \ZZ_p^k$ such that $\y \pmod{p} \not\in \mathcal{L}(\FF_p)$. Then the fibre $Z_\y$ of $\pi'$ over $\y$
has a smooth point over $\ZZ_p$.
\end{lemma}
\begin{proof}
Under the hypothesis of the lemma, it follows that $Q_i(\y) \not\equiv 0 \pmod{p}$ for some index $i$. Thus we get a solution $\x \pmod{p}$ to the linear equation
$C(\x,\y) \equiv 0 \pmod{p}$. This is non-singular, by definition, which we can lift to a solution in $\ZZ_p$ by Hensel's lemma.
\end{proof}

\begin{proof}[Proof of Theorem~\ref{thmsolubility}]
As in the proof of Theorem~\ref{thmsolubility2}, we begin by defining certain subsets $\Omega_p$ for each prime $p$. We will show for each prime $p$ that if $\y \in \Omega_p$, then 
$Z_\y$ has a smooth point over $\ZZ_p$. Recall that the fibres 
$Z_\y$ of $\pi'$ are cut out by the vanishing of linear polynomials, so $Z_\y$ has a point over $\ZZ$ if it is everywhere locally soluble.

Let $g$ denote the greatest common factor of the coefficients of the quadratic forms
$Q_i(\y)$. Then $g\ll_C 1$. Let $p$ be a prime. If $p \mid g$,
set
$$
\Omega_p = \left\{\y \in \ZZ_p^k: \y \equiv \r \bmod{p^{2v-1}}\right\},
$$
where $\r$ and $p$ are as in the statement of Lemma~\ref{lemma0}. As $\Omega_p$ is defined by a congruence condition, it is clear that $\mu_p(\Omega_p) > 0$. 

If $p \nmid g$, define 
$$
\Omega_p = \left\{\y \in \ZZ_p^k : \y \mod{p} \not\in \mathcal{L}(\FF_p)\right\},
$$
where $\mathcal{L}$ is the variety defined in Lemma~\ref{lemma2}. Note that $\mathcal{L}$ has codimension at least $2$ since $n-k \geq 2$, by
assumption. Since the quadratic forms $Q_1(\y),\ldots,Q_m(\y)$ are coprime, it is clear that $\Omega_p \neq \emptyset$. Indeed, if $\y \in \Omega_p$, then 
$\y + p\ZZ_p^k \subset \Omega_p$, whence $\mu_p(\Omega_p) > 0$. 

Set 
$$
\mathcal{C}_k(Y) = \left\{\y \in \ZZ^k \cap Y\Omega_{\infty}: \y \in \Omega_p\text{ for all primes }p \right\},
$$
where $\Omega_{\infty}$ is as in the statement of the theorem. Observe that if $\y \in \mathcal{C}_k(Y)$, then 
$$
\gcd(Q_1(\y),\ldots,Q_m(\y)) \ll_C 1.
$$ This proves the first statement of Theorem~\ref{thmsolubility}. Since $\mathcal{L}$ is of codimension at least $2$, the second statement readily follows from~\cite[Proposition 3.2]{BBL16}. This completes the proof.
\end{proof}


\section{Counting solutions to linear equations}\label{secthunder}

Let $\a = (a_1,\ldots,a_n) \in \ZZ_{\prim}^n$ i.e., that $\gcd(a_1,\ldots,a_n)=1$ and let $b \in \ZZ$. Set
\begin{equation*}
N(\a,b,B) = \#\left\{(\|\x\|_2^2+1)^{1/2} \leq B : \sum_{i=1}^n a_ix_i + b = 0\right\}.
\end{equation*}
In this section, we will be obtain an asymptotic formula for $N(\a,b,B)$ that is uniform in the coefficients $a_i$ and $b$. The main result is the following proposition, which we will prove by appealing to work of Thunder~\cite{thunder}.
\begin{proposition}\label{proplineq}
Let $\a \in \ZZ_{\prim}^n$ and let $b \in \ZZ$. Suppose that $B\geq (|b|/\|\a\|_2)^{\frac{1}{1-\eta}}$, for some $\eta > 0$. Let $\lambda_1$ denote the length of the smallest non-zero vector in the lattice 
$$
\Lambda_{\a} = \left\{\x \in \ZZ^n: \langle \a, \x \rangle = 0\right\}.
$$
Then there exists a constant $c = c(n) > 0$, independent of $\a$ and $b$, such that
$$
N(\a,b,B) = \frac{cB^{n-1}}{\|\a\|_2} + O\left(\frac{B^{n-1-\eta}}{\|\a\|_2}\right) + O\left(\sum_{j=0}^{n-2}\frac{B^j}{\lambda_1^j}\right),
$$
and the implicit constants in the error terms depend only on $n$.
\end{proposition}

We will also need the following straightforward generalisation of Proposition~\ref{proplineq}. For an integer $g$ set
\begin{equation*}
N_g(\a,b,B) = \#\left\{(\|\x\|_2^2+1)^{1/2} \leq B : (\x,g) = 1 \mbox{ and }\sum_{i=1}^n a_ix_i + b = 0\right\}.
\end{equation*}
Then we have 
\begin{corollary}\label{corlineq}
Let $\a \in \ZZ_{\prim}^n$ and let $b \in \ZZ$. Suppose that $B\geq (|b|/\|\a\|_2)^{\frac{1}{1-\eta}}$, for some $\eta > 0$. Let $\lambda_1$ denote the length of the smallest non-zero vector in the lattice 
$$
\Lambda_{\a} = \left\{\x \in \ZZ^n: \langle \a, \x \rangle = 0\right\}.
$$
Then there exists a constant $c = c(n) > 0$, independent of $\a$, $b$ and $g$, such that
\begin{align*}
N_g(\a,b,B) &= \prod_{p \mid (b,g)}\left(1-\frac{1}{p^{n-1}}\right)\frac{cB^{n-1}}{\|\a\|_2} +  O\left(d(g)\frac{B^{n-1-\eta}}{\|\a\|_2}\right) \\
&\quad \quad + O\left(d(g)\sum_{j=0}^{n-2}\frac{B^j}{\lambda_1^j}\right),
\end{align*}
where $d(g)$ is the number of divisors of $g$ and the implicit constants in the error terms depend only on $n$.
\end{corollary}
Observe that Corollary~\ref{corlineq} follows from Proposition~\ref{proplineq} since
\begin{align*}
N_g(\a,b,B) = \sum_{d \mid (b,g)}\mu(d)N(\a,b/d,\sqrt{B^2/d^2-1})
\end{align*}
and the fact that $\sqrt{B^2/d^2-1} = B/d + O(d/B).$
\subsection{Preliminaries}

Define
$$
S = \left\{\x \in \QQ^{n+1}: \sum_{i=1}^na_ix_i +bx_{n+1} = 0\right\}.
$$ 
Then $S$ is an $n$-dimensional subspace of $\QQ^{n+1}$ and $S \not\subset \QQ^n$. 
Let $V = S \cap \QQ^n$, whence
 $V$ is an $n-1$ dimensional subspace of $S$ and there exists $\bs{\beta} \in \QQ^n$ such that $S = \QQ(\boldsymbol{\beta},1) \oplus V$. Clearly we must have $\langle\a,\bs{\beta}\rangle + b = 0$ and 
$$
V = \left\{\x \in \QQ^n: \a.\x = 0\right\}.
$$
Let $V_{\RR}^\perp$ denote the orthogonal complement of $V_{\RR}$. Let $\e_1,\ldots,\e_{n-1}$ be a basis for $V$. 
Then $V_{\RR}^{\perp}$ is spanned by $\a$. 
Moreover, $\e_1,\ldots,\e_{n-1},\a$ form a basis for $\RR^n$. 
Thus we may write $\bs{\beta} = \sum_{i=1}^{n-1}\lambda_i\e_i+ \lambda_n\a$ for some $\lambda_i \in \RR$. Let $\pi: \RR^n \to V_{\RR}^{\perp}$ denote the projection map onto $V_{\RR}^{\perp}$. 
Then we have $\pi(\bs{\beta}) = \lambda_n\a$. 
However, since $\a.\beta = - b = \lambda_n\|\a\|_2^2$, we get that 
$$\pi(\bs{\beta}) = -b\hat{\a}/\|\a\|_2,$$ with $\hat{\a} = \a/\|\a\|_2$. 

\subsection{Counting lattice points in certain domains}
Set $I(V) = V \cap \ZZ^{n}$. Then $I(V) = \Lambda_{\a}$.
We may assume without loss of generality that $\bs{\beta} \in \ZZ^n.$ 
Define
\begin{equation*}
\lambda(\ZZ,B) = \#\left\{\x \in I(V) : (\|\x + \bs{\beta}\|_2^2 + 1)^{1/2} \leq B  \right\}.
\end{equation*}
Then we clearly have 
\begin{equation}\label{eqlzbnab}
\lambda(\ZZ,B) = N(\a,b,B).
\end{equation}
Write $\bs{\beta} = \bs{\beta'} + \pi(\bs{\beta})$, with $\bs{\beta'} \in V_{\RR}$. 
Then for $\x \in I(V)$ we have
\begin{equation*}\label{eq:translate}
\begin{split}
\langle \x + \bs{\beta}, \x + \bs{\beta} \rangle &= \langle \x + \bs{\beta'}, \x +\bs{\beta'} \rangle + \langle \pi(\bs{\beta}), \pi(\bs{\beta})\rangle \\
&= \langle \x + \bs{\beta'}, \x +\bs{\beta'} \rangle + \frac{b^2}{\|\a\|_2^2}.
\end{split}
\end{equation*}
As a result, we have
\begin{equation*}
\lambda(\ZZ,B) = \#\left\{\x \in I(V) : \left(\|\x + \bs{\beta'}\|_2^2 + \frac{b^2}{\|\a\|_2^2}\right)^{1/2} \leq B  \right\}.
\end{equation*}
This shows that $\lambda(\ZZ,B)$ is the number of lattice points in a domain that is a translate of a ball of radius $B$.

Now, $I(V)$ is an $n-1$ dimensional lattice in $\RR^n.$ 
Therefore there exists a linear transformation 
$P: \RR^n \to \RR^{n-1}$ such that $P$ induces an isomorphism $V_{\mathbf{\RR}} \simeq \RR^{n-1}$ and $P(V_{\RR}^{\perp}) = \mathbf{0}$. In particular, $P(I(V))$ is a lattice of full rank in $\RR^{n-1}$. Furthermore, $\|P(\x)\|_2 = \|\x\|_2$ for any $\x \in V_{\RR}$. Let $P(\bs{\beta'}) = \bs{\beta''},$ for some $\bs{\beta''} \in \RR^{n-1}$. 
Set $\frac{b}{\|\a\|_2} = a.$ 
Then 
\begin{equation}\label{eq:translate1}
\lambda(\ZZ,B) = \#\left\{\x \in P(I(V)) : (\|\x + \bs{\beta''}\|_2^2 + a^2)^{1/2} \leq B\right\}.
\end{equation}

\subsection{Volume computations}
Set
\begin{equation}\label{eq:regiondb}
D_n(B) = \left\{ \x \in \RR^n : (\|\x\|_2^2 + a^2)^{1/2} \leq B\right\}
\end{equation}
and
\begin{equation}\label{eq:regiondb1}
\widetilde{D}_n(B) =  \left\{ \x \in \RR^n : (\|\x + \bs{\beta''}\|_2^2 + a^2)^{1/2} \leq B\right\}.
\end{equation}
Since translations preserve volume, we have $\vol \widetilde{D}_n(B) = \vol D_n(B)$.
\begin{lemma}\label{lemmavol}
Let $n \geq 2$. Then there exists a constant $C(n) > 0$ such that
\begin{equation*}
\vol D_{n}(B) = C(n) \int_{(u^2+a^2)^{1/2} \leq B} u^{n-1}\, du.
\end{equation*}
\end{lemma}

\begin{proof}
Let $V_n(B)$ denote the volume in question. Suppose first that $n = 2$. Then by switching to polar coordinates we get
\begin{align*}
V_2(B) &= \pi \int_{(u^2 + a^2)^{1/2} \leq B} u\, du. 
\end{align*}
Suppose next that $n \geq 3$. 
Then we proceed recursively in the following manner. 
By Fubini, we write
\begin{align*}
V_n(B) &= \int_{(x_n^2+ x_{n-1}^2 + \sum_{i=1}^{n-2} x_i^2 + a^2)^{1/2} \leq B} dx_{n-1} \, dx_n \, \prod_{i=1}^{n-2} dx_i \\
\end{align*}
Switching to polar coordinates we get
\begin{align*}
V_n(B) &= \int_{-\pi/2}^{\pi/2} d\theta \int_{(u^2 + \sum_{i=1}^{n-2} x_i^2 + a^2)^{1/2} \leq B} u\,  du \, \prod_{i=1}^{n-2} dx_i \\
&= \left(\int_{-\pi/2}^{\pi/2} d\theta \right)\left(\int_{-\pi/2}^{\pi/2} \cos \theta \, d\theta\right)\times \\
&\quad \quad  \int_{(u^2 + \sum_{i=1}^{n-3} x_i^2 +a^2)^{1/2} \leq B} u^2 \, du \, \prod_{i=1}^{n-3} dx_i.
\end{align*}
Proceeding in this fashion, after $n-2$ steps we get that
\begin{align*}
V_n(B) &= \prod_{j=0}^{n-2}\int_{-\pi/2}^{\pi/2} \cos^j \theta \, d\theta \int_{(u^2+a^2)^{1/2} \leq B} u^{n-1}\, du  \\
&=  \prod_{j = 0}^{n-2} \frac{\Gamma(\frac{1}{2})\Gamma(\frac{j+1}{2})}{\Gamma(\frac{j}{2}+1)} \int_{(u^2+a^2)^{1/2} \leq B} u^{n-1}\, du,
\end{align*}
by~\cite[Equation 3.621.5]{Grad}, for example. Taking 
$$
C(n) = \prod_{j = 0}^{n-2} \frac{\Gamma(\frac{1}{2})\Gamma(\frac{j+1}{2})}{\Gamma(\frac{j}{2}+1)} = \frac{\pi^{\frac{n-1}{2}}}{\Gamma(\frac{n}{2})}
$$
completes the proof.
\end{proof}

\begin{lemma}\label{lemmavolcomp}
For $1 \leq l \leq n$ let $V(l,B)$ (resp. $\widetilde{V}(l,B)$) denote the sum of the volumes of the regions $D_l(B)$ defined in~\eqref{eq:regiondb} (resp. $\widetilde{D}_l(B)$ defined in~\eqref{eq:regiondb1}) projected onto $\RR^{l}$ by setting $n-l$ coordinates to be zero. Set $V(0,B) = \widetilde{V}(0,B) = 1$.
Suppose that $|a| \leq B^{1-\eta}$ for some $\eta > 0$. 
Then there exists constants $c(l,n) \neq 0$ such that 
$$
\widetilde{V}(l,B) = V(l,B) = B^{l}(c(l,n) + O(B^{-\eta})).
$$
\end{lemma}

\begin{proof}
If $l = 0$, then there is nothing to prove. So we may suppose that $1 \leq l \leq n$. A calculation similar to the one in the proof of Lemma~\ref{lemmavol} shows that there exist constants $c(l,n)$ such that
$$
V(l,B) = c(l,n)\int_{(u^2+a^2)^{1/2} \leq B}u^{l-1} \, du.
$$
Set $u = aw$. Then we have 
\begin{align*}
V(l,B) &= c(l,n)a^{l}\int_{(w^2+1)^{1/2} \leq B/a}w^{l-1} \, dw.
\end{align*}
Next, make the change of variables $w^2+1 = x$. Then we get
\begin{align*}
V(l,B) &= \frac{c(l,n)}{2}a^{l}\int_{1\leq x \leq B^2/a^2}(x-1)^{\frac{l-2}{2}}\, dx\\
&= \frac{a^{l}c(l,n)}{2} \int_{0}^{B^2/a^2-1} x^{\frac{l-2}{2}} \, dx \\
&= \frac{a^{l}c(l,n)}{2l}(B^2/a^2-1)^{\frac{l}{2}} \\
&= B^{l}(c(l,n) + O(a^2/B^2)) = B^{l}(c(n,l) + O(B^{-\eta}),
\end{align*}
say, since $|a| \leq B^{1-\eta}.$ This completes the proof of the lemma.
\end{proof}

\subsection{Proof of Proposition~\ref{proplineq}}

For $1 \leq i < n-1$ let 
\begin{align*}
\d_i = \min_{\substack{L \subset I(V) \\ \rank L = i}} \det L
\end{align*}  
and set $\d_0 = 1$.
By~\eqref{eqlzbnab} and~\eqref{eq:translate1} and by invoking~\cite[Theorem 5]{thunder} we get that 
$$
N(\a,b,B) = \frac{V(4,B)}{\|\a\|_2} + O\left(\sum_{l=0}^{n-2} \frac{V(l,B)}{\d_{l}}\right),
$$
where $V(l,B)$ are as in the statement of Lemma~\ref{lemmavolcomp}.

Let $\lambda_1 \leq \lambda_2 \leq \ldots \leq \lambda_{n-1}$ denote the successive minima of the lattice $I(V)$ with respect to the unit ball. Then by Minkowski's second theorem, for $0 \leq l \leq n-2$, it follows that $\d_l \gg \lambda_1\ldots\lambda_l \geq \lambda_1^l$. Proposition~\ref{proplineq} now follows from Lemma~\ref{lemmavolcomp}.



\section{Proof of Theorems~\ref{thma1} and~\ref{thma2}}\label{secproofa}

This section is devoted to the proof of Theorems~\ref{thma1} and~\ref{thma2}. Let 
$
C(\x,\y)
$ be a cubic form in $n$ variables with $h(C) = h,$ as in~\eqref{eq:cubicxy}. Let $Z \subset \AA^n$ be the cubic hypersurface $C(\x,\y) = 0$. 
Let $\pi$ be as in~\eqref{eq:map}. Then the fibre over a point $\y$ is the
affine quadric
$
Z_\y: F_\y(\x) =  0,
$
where 
\begin{equation}\label{eq:deffyx}
F_\y(\x) = \sum_{i=1}^hy_iF_i(\x) + \sum_{j=1}^{n-h}x_j q_j(\y) + R(\y).
\end{equation}
We will denote the quadratic part of $F_\y$ by 
\begin{equation}\label{eq:defqyx}
Q_\y(\x) =  y_1F_1(\x) + \ldots + y_hF_h(\x).
\end{equation}
Let $M[\y]$ be the matrix associated with the quadratic form $Q_\y(\x)$. Let $r = r(C)$ denote the rank 
of the fibration $\pi$ (see Definition~\ref{genericrank}). 

Let 
\begin{equation}\label{eq:nyqyb}
N_\y(B) = \#\left\{|\x| \leq B: (\x,\y) = 1 \text{ and }F_\y(\x) = 0\right\}.
\end{equation}
Then for any $1 \leq Y \leq B$, we have
$$
N(B) \geq \sum_{\substack{\y \in \ZZ^h \\ |\y| \leq Y}}N_\y(B).
$$
To prove Theorems~\ref{thma1} and~\ref{thma2}, we will show that there are `many' $\y$ so that
the equation $F_\y = 0$ is soluble, and for each such $\y$ we
will use the circle method to obtain a lower bound for $N_\y(B)$. While studying the solubility of the equation $F_\y = 0$,
we will encounter two distinct classes of quadratic forms $Q_\y(\x)$: forms that can be written as $Q_\y(\x) = l(\y)F(\x)$, with $l$ 
linear and $F$ a non-singular definite quadratic form in $r(C)$ variables, and those that cannot be written in this form. 

Observe 
that if $Q_\y(\x) = l(\y)F(x_1,\ldots,x_r)$, then  
\begin{equation}\label{eq:myhyp1}
M[\y] = l(\y)N \quad \quad \text{ with } \quad \quad N = \begin{pmatrix} N_1 & \mathbf{0}_{r\times n-r} \\
\mathbf{0}_{n-r\times r} & \mathbf{0}_{n-r \times n-r}
\end{pmatrix},
\end{equation}
with $N_1$ a symmetric definite matrix of order $r$ and $\mathbf{0}_{u \times v} \in \mat_{u,v}$ is the zero matrix. If this the case, we will say that $C$ satisfies 
\begin{hypothesis}\label{h1}
Let $F_\y(\x)$ and $Q_\y(\x)$ be as in~\eqref{eq:deffyx} and~\eqref{eq:defqyx}. 
There exists a linear form $l(\y)$ and a definite quadratic form $F(\x)$ of rank $r = r(C)$ such that after a linear change of variables, 
$Q_\y(x_1,\ldots,x_{n-h}) = l(\y)F(x_1,\ldots,x_r)$, or equivalently, that there exists a symmetric definite matrix $N_1$ of order $r$ such that~\eqref{eq:myhyp1} holds. 
\end{hypothesis}
Note that if $C$ satisfies Hypothesis~\ref{h1}, then for each fixed $\y$ the quadratic form $Q_\y(\x)$ is definite. 
\subsection{Proof of Theorem~\ref{thma1}}\label{proofthma1}
To prove the theorem, we may assume that $h \leq 13$, for otherwise, the result is superseded by Theorem~\ref{proph14intro}. We have
by assumption that $r \geq \min\left\{n-h-4, 5\right\}$. By the hypothesis of Theorem~\ref{thma1}, $C$ satisfies Hypothesis~\ref{h1}. We may therefore assume without loss of generality that $Q_\y(\x) = y_1F(x_1,\ldots,x_r)$ with $r \geq 5$. Thus we have
\begin{equation}\label{eq:cdeff}
C(\x,\y) = y_1F(x_1,\ldots,x_r) + \sum_{j=1}^{n-h}x_jq_j(\y) + R(\y).
\end{equation}
We will assume that $F$ is positive-definite, the negative-definite case can be handled similarly. We will prove the following propositions, from
which Theorem~\ref{thma1} easily follows, since $n-h-4\leq r \leq n-h$, by assumption.
\begin{proposition}\label{propthma11}
Let $C$ be as in~\eqref{eq:cdeff} with $F$ positive definite. Assume that
$y_1 \mid q_i(\y)$ for each $1 \leq i \leq r$.
Then we have
$$
N(B) \gg B^{r-2+\frac{2}{3}(n-r-1)}.
$$
\end{proposition}

\begin{proposition}\label{propthma12}
Let $C$ be as in~\eqref{eq:cdeff} with $F$ positive definite. Assume that
$y_1 \nmid q_i(\y)$ for some $1 \leq i \leq r$.
Then we have
$$
N(B) \gg B^{n-h-2+\frac{h-1}{2}}.
$$

\end{proposition}

Let $M$ denote the matrix associated to $F$ and let $\Delta$ denote the discriminant of $F(\x)$. 
The key tool we will use to count solutions to~\eqref{eq:cdeff} is the following result. 
\begin{proposition}\label{propcoprime}
Let $F$ be a non-singular, positive-definite quadratic form of rank $r$ and determinant $\Delta$. Let $\bs{\xi}\in \ZZ^{r}$ and let $P^2 = N$ be an integer with
$P$ large. Suppose that $F(\bs{\xi})-N \equiv 0 \pmod{2\Delta}$. Assume that $|\bs{\xi}| \ll P$. Let $w(\x)$ be a smooth, positive bump function around a real solution to $F(\x+\bs{\xi}/P)=1$. Define 
\begin{align*}
M(F,N) = \sum_{\substack{(\x,2\Delta)=1 \\ F(\x+\bs{\xi}) = N}}w(P^{-1}\x).
\end{align*}
Then there exist constants $c = c_{F,N} > 0$ that depends only on $F$ and $N$, and $\delta > 0$ such that
$$
M(F,N) = c\prod_{p \mid 2\Delta}\left(1-\frac{1}{p^{r}}\right)P^{r-2}+O(P^{r-2-\delta}).
$$
Moreover, we have the bound $c = c_{F,N} \gg 1$, where the implied constant depends only on the coefficients of $F$.
\end{proposition}
In order to prove the proposition, we will need the following lemma.
\begin{lemma}\label{lemmacoprime}
Let $F$ be a non-singular positive definite quadratic form of rank $r$ with determinant $\Delta$. Let $d$ be a squarefree integer such that $d \mid 2\Delta$. Let $\bs{\xi}\in \ZZ^{r}$ and let $P^2 = N$ be an integer with
$P$ large. Suppose that $F(\bs{\xi})-N \equiv 0 \pmod{2\Delta}$. Assume that $|\bs{\xi}| \ll P$. Assume that $|\bs{\xi}| \ll P$. Set
$$
M_d(F,N) = \sum_{\substack{F(\x) = N \\ \x \equiv \bs{\xi} \mod{d}}}w\left(\frac{\x - \bs{\xi}}{P}\right).
$$
Then there exist constants $c = c_{F,N} > 0$ that depends only on $F$ and $N$ and $\delta > 0$ that are independent of $d$ such that
$$
M_d(F,N) = \frac{cP^{r-2} }{d^{r}}+ O(P^{r-2-\delta}).
$$
Moreover, we have the bound $c \gg 1$, where the implied constant depends only on the coefficients of $F$.
\end{lemma}
\begin{proof}
The proof of the lemma is a straightforward adaptation of~\cite[Theorem 4]{HB96}, so we will only give a brief sketch. We begin by remarking that since all constants in our work are allowed to depend on $C$, we have that $d \ll \Delta \ll 1$. Using the smooth $\delta$-function~\cite[Theorem 1]{HB96}, we may write
\begin{equation}
M_d(F,N) = \frac{c_P}{d^{r}}\sum_{\cc \in \ZZ^{r}}\sum_{q \ll P}q^{-(n-h)}S_{d,q}(\cc)I_q(\cc),
\end{equation}
where $c_P = 1 + O_J(P^{-J})$, for any integer $J \geq 1$,
\begin{align*}
S_{d,q}(\cc) = \sumstar_{a \mod{q}}\sum_{\substack{\b \mod{dq} \\ \b \equiv \bs{\xi} \mod{d}}}e_q(a(F(\b)-N))e_{dq}(\b.\cc),
\end{align*}
and
\begin{align*}
I_q(\cc) = \int w\left(\frac{\x - \bs{\xi}}{P}\right)\hf{\frac{F(\x)-N}{N}}e_{dq}(-\cc.\x)\,d\x,
\end{align*}
with $r = q/P$. The main difference between the exponential sum $S_q(\cc)$ defined above and the analogous sum in Heath-Brown's work is the appearance
of the additional congruence condition modulo $d$. In place of~\cite[Lemma 23]{HB96}, we have the relation
\begin{equation}\label{eq:twistmult}
S_{d,q}(\cc) = S_{1,q_1}(\overline{q_2}\cc)S_{d,q_2}(\overline{q_1}\cc),
\end{equation}
where $q = q_1q_2$ with $(q_1,d) = 1$ and $q_2 \mid d^{\infty}$ and $\overline{q_2}$ (resp. $\overline{q_1}$) denotes the multiplicative inverse of $q_2 \pmod{q_1}$ (resp. $q_1 \pmod{q_2}$). Using~\eqref{eq:twistmult} and by arguing as in the proof of~\cite[Lemma 28]{HB96},
we have for $|\cc| \ll P$ that
$$
\sum_{q \leq X}S_q(\cc) \ll_{\ve} X^{\frac{3+r}{2}+\ve}P^{\ve}.
$$
By~\cite[Lemma 19]{HB96}, it follows that $I_q(\cc) \ll P^{-A}$ unless $|\cc| \ll P^{\ve}$. For $\cc$ in this range,~\cite[Lemma 22]{HB96} ensures that for any $\ve > 0$ we have
$$
I_q(\cc) \ll_{\ve} P^n\left(\frac{P^2|\cc|}{q^2}\right)^{\ve}\left(\frac{P|\cc|}{q}\right)^{1-\frac{r}{2}}.
$$
Combining these estimates, we have that
$$
\frac{c_P}{d^{r}}\sum_{\cc \neq \mathbf{0}}\sum_{q \ll P}q^{-r}S_q(\cc)I_q(\cc) \ll_{\ve} P^{\frac{r}{2}+\ve}.
$$
All that remains is to evaluate the main term
$$
\frac{c_P}{d^{r}}\sum_{q \ll P}q^{-r}S_q(\mathbf{0})I_q(\mathbf{0}).
$$
Observe that~\eqref{eq:twistmult} ensures that
$$
S_q(\mathbf{0}) = \sumstar_{a \mod{q}}\sum_{\b \mod{q}}e_q(a(F(\b)-N)),
$$
which is identical to the exponential sum in the proof of~\cite[Theorem 4]{HB96}. Also,
$$
I_q(\mathbf{0}) = P^r\int w(\x)\hf{F(\x+\bs{\xi}/P)-1}\, d\x.
$$
By definition of the function $w(\x)$ it follows from~\cite[Theorem 3 and Lemma 13]{HB96} that $I_q(\mathbf{0}) = P^r (c_{\infty} + O_N((q/P)^N))$,
where
$$
c_{\infty} = \lim_{\ve \to 0}(2\ve)^{-1}\int_{|F(\x + \bs{\xi}/P)-1| < \ve} w(\x)\, d\x > 0.
$$ 
Note that although $c_{\infty}$ has 
mild dependence on $P$, it is clear that $c_{\infty} \gg 1$ uniformly in $P$. Proceeding as in the evaluation of the main term in~\cite[Theorem 4]{HB96}, we get that
$$
\frac{c_P}{d^{r}}\sum_{q \ll P}q^{-r}S_q(\mathbf{0})I_q(\mathbf{0}) = \frac{c_{F,N}c_{\infty}P^{r-2}}{d^{r}} + O_{\ve}(P^{\frac{r}{2}+\ve}),
$$
with $c_{F,N} = \sum_{q=1}^{\infty}q^{-r}S_q(\mathbf{0})$. Note that $c_{F,N} > 0$ since $\rank F \geq 5$. Setting $c=c_{\infty}c_{F,N}$ we 
obtain the asymptotic formula in the statement of the lemma. The lower bound $c \gg_F 1$ follows from~\cite[Proposition 2]{BD08}, since 
$r = \rank F \geq 5$. This completes the proof of the lemma.
\end{proof}

\begin{proof}[Proof of Proposition~\ref{propcoprime}]
We have
\begin{equation}
\begin{split}
M(F,N) &= \sum_{\substack{(\x,2\Delta)=1 \\ F(\x+\bs{\xi}) = N}}w(P^{-1}\x) \\
&= \sum_{d \mid 2\Delta}\mu(d)\sum_{F(d\x+\bs{\xi}) = N}w(P^{-1}d\x) \\
&= \sum_{d \mid 2\Delta}\mu(d)\sum_{\substack{F(\x) = N \\ \x \equiv \bs{\xi} \mod{d}}}w\left(\frac{\x - \bs{\xi}}{P}\right) \\
&= \sum_{d \mid 2\Delta}\mu(d)M_d(F,N),
\end{split}
\end{equation}
say. Note that $F(\bs{\xi}) - N \equiv 0 \pmod{2\Delta}$, by assumption. As a result, the sum $M_d(F,N)$ is well-defined. The Proposition is an immediate
consequence of Lemma~\ref{lemmacoprime}.
\end{proof}

Turning to the proof of Propositions~\ref{propthma11} and~\ref{propthma12}, we
will set $\y = 2\Delta (1,z_2,\ldots,z_h) = 2\Delta(1,\z)$ in~\eqref{eq:cdeff}. Then 
we get
\begin{equation}\label{eq:aux531sun}
\begin{split}
C(\x,\y) &= 2\Delta F(x_1,\ldots,x_r) + 4\Delta^2 \sum_{j=1}^{n-h}x_jq_j(1,z_2,\ldots,z_h) \\ 
&\quad \quad + 8\Delta^3 R(1,z_2,\ldots,z_h).
\end{split}
\end{equation}
For this choice of $\y$, we see that~\eqref{eq:cdeff} is soluble so long as
\begin{equation}\label{eq:auxillary}
F(x_1,\ldots,x_r) + 2\Delta \sum_{j=1}^{n-h}x_jq_j(1,\z) + 4\Delta^2  R(1,\z) = 0.
\end{equation}
Note that this equation has a non-trivial solution in $\ZZ_p$ for each prime $p$ provided that it is soluble modulo $\Delta$. However, this follows since $\rank F \geq 5$.
Therefore, in order to count solutions to~\eqref{eq:auxillary}, all 
that is required is to ensure solubility over $\RR$. 

Let
\begin{equation}\label{eq:definesz}
s(\z) = (q_1(1,\z),\ldots,q_{r}(1,\z)).
\end{equation}
Observe that the matrix $\adj M$, the adjoint matrix of $M$, has integer coefficients, whence 
\begin{equation*}
\Delta^2 F^{-1}(s(\z)) = F((\adj M) s(\z))
\end{equation*}
is an integer, where $F^{-1}$ is the quadratic form with matrix $M^{-1}$. 
By adding and subtracting $F((\adj M) s(\z))$, we get that~\eqref{eq:auxillary} is soluble if and only if
\begin{equation}\label{eq:auxillary1}
\begin{split}
F(\x + (\adj M) s(\z)) &= \Delta^2(F^{-1}(s(\z)) - 4R(1,\z)) \\
&\quad \quad + \Delta\sum_{i=r+1}^{n-h}x_iq_i(1,z_2,\ldots,z_h)
\end{split}
\end{equation}
is soluble.
\begin{lemma}\label{lemmasun525}
Let $C$ be as in~\eqref{eq:cdeff} with $F$ positive definite. Assume that
$y_1 \mid q_i(\y)$ for $1 \leq i \leq r$. Suppose that one of the following statements hold:
\begin{enumerate}
\item If $r < n-h$, there exists $\z \in \RR^{h-1}$ such that $q_j(0,\z) \neq 0$ for some $r+1 \leq j \leq n-h$, or
\item If $r = n-h$, there exists $\z \in \RR^{h-1}$ such that $R(0,\z) \neq 0$.
\end{enumerate}
Then there exists a set $\mathcal{C}_{n-r-1}(B) \subset \ZZ^{n-r-1} \cap [-B,B]^{n-r-1}$ such that
for any $(x_{r+1},\ldots,x_{n-h},y_2,\ldots,y_h) \in \mathcal{C}_{n-r-1}(B)$, the equation~\eqref{eq:auxillary1} is soluble over $\ZZ$
and that 
$$
\sum_{i=r+1}^{n-h}x_iq_i(1,\z) - \Delta R(1,\z) \geq B^2/4.
$$ 
Moreover, we have $|\mathcal{C}_{n-r-1}(B)| \gg B^{\frac{2}{3}(n-r-1)}.$
\end{lemma}
\begin{proof}
As we have already observed, to ensure solubility of~\eqref{eq:auxillary1} over $\ZZ$,
it suffices to check that it is soluble over $\RR$. We will treat the case where the first alternative
of the lemma holds. A similar argument works in the other case, so we omit the details.

We have that $r < n-h$ and
there exists $\z \in \RR^{h-1}$ such that $q_j(0,\z) \neq 0$ for some $r+1 \leq j \leq n-h$. 
Let $\uu \in \RR^{h-1}$ such that $q_j(0,\uu) \neq 0$. Let $U$ be an $\ve$ ball around $\uu$, for some 
fixed $0 < \ve < 1/1000$. Suppose that $\z \in \ZZ^{h-1} \cap B^{2/3}U$. Then for $B \gg 1$, we have
that $|q_j(0,\z)| \geq A_1B^{4/3}$ for some $A_1 > 0$. Note that
$$
q_j(1,\z) = q_j(0,\z) + O(B^{1/3}).
$$
As a result, we have for any $\z \in \ZZ^{h-1}\cap B^{2/3}U$ that $|q_j(1,\z)| \geq A_1B^{4/3}$, and that 
$$
\max_{\substack{r+1 \leq i \leq n-h \\ i \neq j}}|q_i(1,\z)|\leq A_2B^{4/3} \text{ and }|R(1,\z)| \leq A_2B^2,
$$ for a
constant $A_2$ that depend on $U$ and the coefficients of $C$. 

Let $c_1$ be a real number we will specify shortly. 
Define
\begin{align*}
\mathcal{C}_{n-r-1}(B) = \left\{(\w,\z) \in \ZZ^{n-r-1} \right.&: \z \in (\min\left\{1/2,(A_2\Delta)^{-1}\right\}B)^{2/3}U, \\
&\left.c_1B^{2/3}\leq |\w| \leq 2c_1B^{2/3} \text{ and}\right.\\
&\left.\sum_{i=r+1}^{n-h}w_iq_i(1,\z) - \Delta R(1,\z) \geq B^2/4\right\}.
\end{align*}
Observe that
$
\sum_{i=r+1}^{n-h}w_iq_i(1,\z) - \Delta R(1,\z) \geq B^2/4,
$ 
so long as 
$$
w_jq_j(1,\z) \geq \delta B^2 + \Delta R(1,\z) - \sum_{\substack{r+1 \leq i \leq n-h \\ i \neq j}}w_iq_i(1,\z),
$$
which we may ensure by taking $c_1 = 1/16(A_1+n),$ for example. As a result, we have shown that if $(\w,\z) \in \mathcal{C}_{n-r-1}(B)$, then
the equation~\eqref{eq:auxillary1} is soluble over $\ZZ$. 
In addition, it is easy to see that $|\mathcal{C}_{n-r+1}(B)| \gg B^{2/3(n-r-1)}$. 
\end{proof}

\begin{proof}[Proof of Proposition~\ref{propthma11}]
Let $q_i(\y) = \alpha_i y_1l_i(\y)$, where $\alpha_i(\y) \in \ZZ[\y]$ for $1 \leq i \leq r$ are linear forms. 
Suppose first that $r = n-h$. Then we have
\begin{equation}\label{eq:absurdmainterm1}
C(\x,\y) = y_1F(\x_1,\ldots,x_r) + y_1\sum_{i=1}^{n-h}\alpha_ix_il_i(\y) + R(\y).
\end{equation} 
We may suppose that $C$ is irreducible, for otherwise, we have the stronger lower bound $N(B) \gg B^{n-1}$. If
$C$ is irreducible, then $h(C) \geq 2$. Observe that 
$
C(\x,0,y_2,\ldots,y_h) = R(0,y_2,\ldots,y_h).
$
This implies that $h(R(0,\z)) \geq 1$, i.e., that the cubic
form $R(0,\z)$ does not vanish identically. As a result, we get that there exists $\z \in \ZZ^{h-1}$ such that $R(0,\z) < 0$. 

On the other hand, if $r < n-h$, then we may assume that $q_j(0,\z)$ does not vanish identically for some $r+1 \leq j \leq n-h$,
for otherwise, we may argue as in the previous paragraph. Thus we see that the hypotheses of Lemma~\ref{lemmasun525} are satisfied 
and let $\mathcal{C}_{n-r-1}(B)$ be as in statement of that lemma.

We will use the notation $\w = (w_{r+1},\ldots,w_{n-h})$ and $\z = (z_2,\ldots,z_h)$. Then we have
by~\eqref{eq:aux531sun},~\eqref{eq:auxillary} and~\eqref{eq:auxillary1} that
$$
N(B) \geq \sum_{(\w,\z) \in \mathcal{C}_{n-r-1}(B)}N_{\w,\z}(B),
$$
where
$$
N_{\w,\z}(B) = \#\left\{\x,2\Delta)=1: |\x| \leq B \text{ and } C(\x,\w,2\Delta(1,\z))=0\right\}.
$$
By setting $F= F_1$, $$
N = \Delta^2(F^{-1}(s(\z)) - 4R(1,\z)) + \Delta\sum_{i=r+1}^{n-h}w_iq_i(1,z_2,\ldots,z_h),
$$ 
and $\bs{\xi} = (\adj M) s(\z)$, it follows from 
Proposition~\ref{propcoprime} that $N_{\w,\z}(B) \gg B^{r-2}$. Putting everything together, we get that
$
N(B) \gg B^{r-2+\frac{2}{3}(n-r-1)},
$
as required.
\end{proof}

\begin{proof}[Proof of Proposition~\ref{propthma12}]
By hypothesis, there exists $1 \leq i \leq r$ such that $q_i(0,\z)$ does not 
vanish identically. Let $\w = (w_2,\ldots,w_h) \in \RR^{h-1}$ such that $q_i(0,\w) \neq 0$. Let $V$ be 
an $\ve-$ball around $\w$ with $\ve$ small enough such that $q_i(0,\z) \neq 0$ for any $\z \in V$. There
exists a constant $A_1 > 0$ such that for any $B \gg 1$, we get from 
the mean value theorem that $|q_j(1,\z)| \geq A_1B$ whenever $\z/B^{1/2} \in V$. Furthermore, 
there exists a constant $A_2 >0$ such that for any $|\z| \leq B^{1/2}$, we have that
$|R(1,\z)| \leq A_2B^{3/2}$ and $|q_i(1,\z)| \leq A_2B$ for each $1 \leq i \leq n-h$. 

Let $c_1 = \frac{1}{\Delta}\min\left\{1,A_1^{-1}\right\}$ and $c_2 = \frac{1}{1000\Delta}\min\left\{1,A_2^{-1}\right\}$. Define
\begin{align*}
\mathcal{C}_{n-r-1}(B) = \left\{(\w,\z) \in \ZZ^{n-r-1} :\right.& \w \in [-c_2B,c_2B]^{n-h-r} \text{ and} \\
&\left.\z/(c_1B)^{1/2} \in V\right\}.
\end{align*}
Then $|\mathcal{C}_{n-r-1}(B)| \gg B^{n-n-r+\frac{h-1}{2}}.$ Let $s(\z)$ be as in~\eqref{eq:definesz}. Set 
$$P^2 = N = \Delta^2(F^{-1}(s(\z)) - 4R(1,\z)) +  \Delta\sum_{i=r+1}^{n-h}x_iq_i(1,z_2,\ldots,z_h).$$
If $(\w,\z) \in \mathcal{C}_{n-r-1}(B)$, then we see that 
$N \geq B^2/4$, as $F^{-1}$ is positive-definite. Put $\bs{\xi} = (\adj M) s(\z)$. Then for any such $(\w,\z) \in \mathcal{C}_{n-r-1}(B)$,
we see that~\eqref{eq:auxillary1}
is soluble. Let
$$
N_{\w,\z}(B) = \#\left\{\x,2\Delta)=1: |\x| \leq B \text{ and } C(\x,\w,2\Delta(1,\z))=0\right\}.
$$
Using Proposition~\ref{propcoprime}, we get that $N_{\w,\z}(B) \gg B^{r-2}$ solutions. As a result,
$$
N(B) \gg B^{r-2}|\mathcal{C}_{n-r-1}(B)| \gg B^{r-2+n-h-r+\frac{h-1}{2}}. 
$$
This completes the proof of the proposition.
\end{proof}

\subsection{Preliminaries for the proof of Theorem~\ref{thma2}}\label{proofthma2} 
Let $\Delta_1,\ldots,\Delta_R$ denote the minors of $M[\y]$ of rank $r$. If $\pi$ has rank, 
then at least one of the minors $\Delta_i(\y)$ does not vanish identically. Moreover, all minors of higher rank 
vanish identically in $\y$. In this section, we will prove some technical results that we will need later in the proof of Theorem~\ref{thma2}.

\begin{lemma}\label{lemmawt}
Suppose that the fribration $\pi$ has rank $r \geq 5$ and that $Q_{\y}(\x)$ does not satisfy Hypothesis~\ref{h1}. Let $\Delta(\y)$ be a minor of order $r$ that does not vanish identically. Then there exists an open set $U$ in $\RR^h$ such that $\Delta(\y) \neq 0$ for any $\y \in U$. Furthermore, for 
any $\y \in U$, the quadratic form $Q_\y(\x)$ is indefinite.
\end{lemma}
\begin{proof}
We will show that there exists $\mathbf{u} = (u_1,\ldots,u_h) \in \RR^h$ such that $\rank \sum_{i=1}^h u_i F_i(\x)$ is $r$ and that if
$\y$ lies in an $\ve$-ball around $\mathbf{u}$ then $Q_\y(\x)$ is indefinite. By shrinking the neighbourhood, if necessary, we will also get that
$\Delta(\y) \neq 0$. This is the open set we seek. We will discuss the cases where $r = n-h$ and $r < n-h$ separately.

Suppose first that $r = n-h$. Then in this case, $\Delta(\y) = \det M[\y]$ does not vanish identically. As a result, at least one of the quadratic forms $F_i(\x)$ is non-singular. Suppose without loss of generality that $F_1(\x)$ is of full rank. If $F_1(\x)$ is also indefinite, set $\mathbf{u} = (1,0,\ldots,0).$ Then in this case, we can take $U$ to be an $\ve$-ball around $\mathbf{u}$ with $\ve < 1/2$.

Thus we may assume henceforth that $F_1(\x)$ is definite. Let $s = \rank_L \sum_{i=2}^hy_iF_i(\x)$ where $L = \QQ(y_2,\ldots,y_h)$. 
As $Q_{\y}(\x)$ does not satisfy Hypothesis~\ref{h1}, we have that $s \geq 1$. Suppose first that $s \geq 2$. Then there exists $(u_2,\ldots,u_h) \in \RR^{h-1}$ such that 
$\rank \sum_{i=2}^h u_i F_i(\x) \geq 2$. Since $F_1(\x)$ is definite, there exists an orthogonal transformation $R$ that depends on
$(u_2,\ldots,u_h)$ such that $F_1(R\x)$ and 
$\sum_{i=2}^h u_iF_i(R\x)$ 
are both diagonal quadratic forms. Let $\epsilon = 1$ if $F_1$ is positive-definite and $-1$ otherwise. 
Then there exist 
$\lambda_i(\z) \neq 0$ 
such that
$$
F_1(R\x) = \epsilon\sum_{i=1}^{n-h}x_i^2 \,\text{ and } \sum_{i=2}^h u_iF_i(R\x) = \sum_{i=1}^{n-h}\lambda_i(\z)x_i^2.
$$ 
Since at least two of $\lambda_i(\z) \neq 0$, we may choose $u_1$ suitably to ensure that 
$$
Q_{\uu}(R\x) = \sum_{i=1}^h u_iF_i(R\x) = \sum_{i=1}^{n-h}x_i^2(\epsilon u_1 + \lambda_i(\z))
$$
is non-singular and indefinite. As a result, we get that $Q_{\uu}(\x)$ is non-singular and indefinite. Let $U$ be an $\ve$-ball around $\mathbf{u}$. As 
$\lambda_i(\z)$ are continuous functions, if $\ve$ is 
small enough, we get that $Q_\y(\x)$ is non-singular and indefinite for any $\y \in U$. 

If $s = 1$, then it is easy to see that there exist linear forms $l(y_2,\ldots,y_h)$ and $m(\x)$ such that $\sum_{i=2}^h y_iF_i(\x) = l(y_2,\ldots,y_h)m(\x)^2$. Then
after a linear change of variables, we may assume that $l(y_2,\ldots,y_h)$ can be replaced by $y_2,$ whence we may assume that $F_2(\x) = m(\x)^2$ and $F_i = 0$ for 
$3 \leq i \leq n-h$. As in the previous case, 
there exists $R \in O_{n-h}(\RR)$ and $\epsilon = \pm 1$ such that $F_1(R\x)$ and $F_2(R\x)$ are simultaneously diagonal. Thus we get that 
$$
u_1F_1(R\x) + u_2F_2(R\x) = \sum_{i=1}^{n-h}(\epsilon u_1 + \lambda_iu_2)x_i^2,
$$
where $\lambda_i \neq 0$ for exactly one $1 \leq i \leq n-h$. We may now choose $u_1$ and $u_2$ appropriately to ensure that $Q_\mathbf{u}(\x)$ is non-singular and indefinite,
where $\uu = (u_1,u_2,0,\ldots,0)$. Taking $U$ to be a small neighbourhood around $\mathbf{u}$ completes the proof of the lemma when $r = n-h$.

Suppose next that $r < n-h$. Then in this case, we get from Lemma~\ref{lemma4w} that $Q_\y(\x)$ is linear in at least $n-h-\rho$ variables $x_i$. If $Q_\y(\x)$ is independent of $n-h-r$ variables, then the lemma follows by repeating the argument from the full rank case. Therefore, we may suppose that there exist variables $x_i$ and $x_j$ with $i \neq j$ such that the coefficient of $x_i^2$ in $Q_\y(\x)$ is identically zero, but the coefficient of $x_ix_j$ is not. Let $L_{i,j}(\y)$ denote the coefficient of $x_ix_j$ appearing in $Q_\y(\x)$. Then we may find $\y_0\in \RR^h$ such that $\Delta(\y_0)L_{i,j}(\y_0) \neq 0$. Then it follows that $\rank M[\y_0] = r$. Write
$$
Q_{\y_0}(\x) = L_{i,j}(\y_0)x_ix_j + L_{j,j}(\y_0)x_j^2+\widetilde{Q}(\x,\y_0),
$$
where $\widetilde{Q}(\x,\y_0)$ is a quadratic form in $\x$ that vanishes if we let all the variables except $x_i$ and $x_j$ to be $0$. By setting all the $x_k$ except $x_i$ and $x_j$ to be $0$, it is clear that $Q_{\y_0}(\x)$ is indefinite. Taking $U$ to be an $\ve$-ball around $\y_0$ completes the proof of the lemma.
\end{proof}

Next, we characterise all matrices $M[\y]$ with the property that all its minors of order $3$ have a common factor. 
\begin{lemma}\label{lemmarank3ormore}
Let $\phi(\y) \in \ZZ[\y]$ be the greatest common divisor of the minors of order $3$ in 
$M[\y] = \sum_{i=1}^h y_iM_i$. Suppose that $\phi(\y)$ is a non-constant polynomial. 
Assume that $r = r(C) \geq 5$ and suppose that $M_1 = \diag (a_1,\ldots,a_r,0,\ldots,0)$ with $a_i \neq 0$. 
Then after a $\QQ$-linear change of variables,
we may write $M[\y] = \sum_{i=1}^h y_iN_i$, such that $\rank N_1 = r$ and $\rank \sum_{i=2}^h y_iN_i \leq 2$ for each $(y_2,\ldots,y_h) \in \AA^{h-1}$. 
\end{lemma}

\begin{proof}
We begin by recording an observation that we will use repeatedly in the proof below: that $\phi(1,0,\ldots,0) \neq 0$, for otherwise, we
obtain that
$\rank M_1 \leq 2$, which contradicts our assumption that $\rank M_1$ is at least $5$. This observation implies that the degree of $y_1$
in the polynomial $\phi(\y)$ is equal to the degree of $\phi$. 

Suppose first that $l(\y) \mid \phi(\y)$, where $l$ is a linear form. Making the change of variables $l(\y) = z_1$, $y_2 = z_2, \ldots, y_n = z_n$,
we get that there exist $\alpha_i \in \QQ$ (with $\alpha_1 \neq 0$) such that
\begin{align*}
M[\y] &= \left(\sum_{i=1}^h \alpha_i z_i \right) M_1 + \sum_{i=2}^h z_iM_i \\
&= \alpha_1z_1M_1 + \sum_{i=2}^h z_iN_i,
\end{align*}
for some matrices $N_i$. Note that $\rank N_1 = \rank \alpha_1M_1 = \rank M_1$, since $\alpha_1 \neq 0$. Moreover, the condition
$z_1 = 0$ implies that $\rank \sum_{i=2}^h z_iN_i \leq 2$, by our assumption that $l$ divides $\phi$, which in turn divides every minor of order $3$. 
Thus the lemma
follows in this case. 

We will now proceed to show that $\phi$ is always divisible by a linear factor. As $\phi$ divides every minor of order $3$,
we see that the degree of $\phi$ is at most $3$, thus it suffices to show that $\phi$ is reducible over $\QQ$.

Let $1 \leq i_1 < i_2 < i_3 \leq n-h$ and $1 \leq j_1 < j_2 < j_3 \leq n-h$. Set $I = \left\{i_1,i_2,i_3\right\}$ and $J = \left\{j_1,j_2,j_3\right\}$. The minors of
$M[\y]$ of order $3$ are of the form
$$
\Phi_{I,J}(\y) = \det (m_{i,j}(\y))_{\substack{i \in I \\ j \in J}},
$$
where $m_{i,j}(\y)$ denotes the entries of the matrix $M[\y]$. 

Suppose first that $\phi$ is irreducible polynomial of degree $3$. Suppose that there exists a minor $\Phi_{I,J}(\y)$ with $|I \cap J| = 2$ that 
does not vanish identically. As $\phi$ divides $\Phi_{I,J}$, this forces the degree of $y_1$ in $\phi$ to be at most $2$, which in turn forces
$\phi(1,0,\ldots,0) = 0$, as the degree of $\phi$ is equal to $3$. This implies that the minors $\Phi_{I,J}(\y)$ must
all vanish identically whenever $|I \cap J|= 2$. For any $(i,j)$ with $1 \leq i,j \leq n-h$ and $i \neq j$, consider the minor $\Phi_{I,J}(\y)$ 
with $I = \left\{a,b,i\right\}$ and $J = \left\{a,b,j\right\}$ with $a,b \leq r$ and $a \neq b$. As $\Phi_{I,J}(\y)$ vanishes identically, we see that 
the coefficient of $y_1^2y_k$ must be $0$. This forces $m_{i,j}(\y) = 0$ whenever $i \neq j$. As a result, if $|I \cap J| = 3$, with
$I = \left\{i_1,i_2,i_3\right\}$, say, we get that
\begin{align*}
\Phi_{I,J}(\y) = \det \begin{pmatrix} m_{i_1,i_1}(\y) & 0 & 0 \\
0 & m_{i_2,i_2}(\y) & 0 \\
0 & 0 & m_{i_3,i_3}(\y) \end{pmatrix}.
\end{align*}
Therefore, we obtain that either $\Phi_{I,J}$ vanishes identically, or it is a product of three linear factors, which contradicts our assumption that $\phi$ is 
irreducible.

All that remains is to eliminate the possibility that $\phi$ is an irreducible polynomial of degree $2$. Suppose that this is the case. 
As we have already observed, $\phi$ is quadratic in $y_1$. Thus
 we see that the minors $\Phi_{I,J}(\y)$ vanish
identically whenever $|I \cap J| \leq 1$. If $|I \cap J| = 2$, then we get that
\begin{equation*}
\begin{split}
\phi_{I,J}(\y) &= y_1^2L(y_2,\ldots,y_h) + y_1U(y_2,\ldots,y_h) + V(y_2,\ldots,y_h),
\end{split}
\end{equation*}
where $L$ is linear, $U$ is quadratic and $V$ is cubic. And if $|I \cap J| = 3$, we see that
\begin{equation*}
\begin{split}
\phi_{I,J}(\y) &= \alpha y_1^3 + y_1^2S(y_2,\ldots,y_h) + y_1T(y_2,\ldots,y_h),
\end{split}
\end{equation*}
with $\alpha \in \ZZ$, $S$ linear and $T$ quadratic. In either case, using the fact that $\phi$ is quadratic in $y_1$, we get that
$y_1 \mid \phi_{I,J}(\y)$, which contradicts the assumption that $\phi$ is irreducible. This completes the proof of the lemma. 
\end{proof}

We end this section by using a result of Rojas-Le\'on~\cite{RL05} to count the number of $\x \in \FF_p^m$ such that
$f(\x)$ is a quadratic residue modulo $p$ for homogeneous polynomials $f$ subject to some mild hypotheses 
on the dimension of their singular loci.
\begin{lemma}\label{lemmakatz}
$f(x_1,\ldots,x_m)$ be a homogeneous form of degree $d$. Suppose that the 
dimension of the singular locus of $f$ in $\AA^m$ is at most $m-2$. Then for $p \nmid d$ sufficiently large in terms of the
coefficients of $f$, we have
$$
\#\left\{\x \in \FF_p^{m}: \jacobi{f(\x)}{p} = 1\right\} \gg p^m.
$$

\end{lemma}
\begin{proof}
Let $X = \PP^{m}$ denote projective space with homogeneous coordinates $x_1,\ldots,x_m,z$. We will
denote by $L$ the hypersurface in $X$ given by $z = 0$. Let
$H = f$ and let $H$ also denote the hypersurface cut out by its vanishing in $X$. 
Let $V = X \backslash (L \cup H)$. We see that $H/L^4$ is a function from $V \to \AA^1$. 
Note that 
$X \cap L \cap H$ is pure of codimension $2$ in $X$. 

Let $\ve' = \dim \sing X \cap H$ and let $\delta = \dim \sing X \cap L \cap H$. Then
$\delta$ is the dimension of the singular locus of $f$ in $\AA^m$. Observe that $\ve' = \delta$. Let $\chi(t)$ denote
the multiplicative character $\jacobi{t}{p}$. Set 
\begin{align*}
S = \sum_{w \in V(\FF_p)}\chi(H(w)/L(w)^4). 
\end{align*}
Then we clearly have
\begin{align*}
S &= \sum_{\x \in \AA^m(\FF_p)}\chi(f(\x)) - \sum_{\x \in \AA^m(\FF_p) : f(\x) = 0}1.
\end{align*}

We are now ready to apply~\cite[Theorem 1.1]{RL05} with $\ve' = \delta$. 
Note that all the hypotheses of that theorem are satisfied and we deduce that 
$$
S \ll p^{\frac{m+\delta+1}{2}}.
$$
As $\delta \leq m-2$, by assumption, we get that
$S \ll p^{m-1/2}$.

Observe that
\begin{align*}
2\#\left\{\x \in \FF_p^{m}: \jacobi{f(\x)}{p} = 1\right\} &= \sum_{\x \in \FF_p^m} 1 + \sum_{\x \in \FF_p^m}\chi(f(\x)) - \sum_{\x \in \FF_p^m : f(\x) = 0}1 \\
&\quad  \\
&= p^m + S = p^m + O(p^{m-1/2}),
\end{align*}
by the bound in the previous paragraph. This completes the proof of the lemma.

\end{proof}
\subsection{Counting points on a family of affine quadrics}\label{secaffquad}
To prove Theorem~\ref{thma2}, we may assume once again that $h \leq 13$. For otherwise, a stronger result follows
from Theorem~\ref{proph14intro}. Let $r = r(C)$ denote the rank of the fibration $\pi$. We have by assumption that $r \geq \min\left\{n-h - 4,5\right\}$. 
In addition, by the hypothesis of Theorem~\ref{thma2}, it follows that $Q_{\y}(\x)$ does not satisfy Hypothesis~\ref{h1}. 

Let $\mathbf{u} = (u_1,\ldots,u_h) \in \RR^h$ be a point in the open set $U$ that was constructed in Lemma~\ref{lemmawt}. Let
$
\Omega_{\infty}
$
be a compact ball of radius $c > 0$ around $\mathbf{u}$, where $c$ is chosen so that $\Omega_{\infty} \subset U$. Let $1 \leq Y \leq B^{2/3-\eta}$ be a parameter, for 
some $\eta > 0$. 

For $\y \in \ZZ^h$ let $F_\y(\x)$ be as in~\eqref{eq:deffyx}.
Then $F_\y(\x)$ is a quadratic polynomial in $\x$ whose quadratic part is given by $Q_\y(\x)$ (see ~\eqref{eq:defqyx}). 
For $\y \in \ZZ^h \cap Y\Omega_{\infty}$, we have by construction that $Q_\y(\x)$ is indefinite and of rank equal to $r \geq 5$. Furthermore, we have that $\Delta(\y) \gg Y^r$, 
where $\Delta(\y)$ is a minor of $M[\y]$ of rank $r$.

Let $\lambda_1(\y),\ldots,\lambda_r(\y)$ denote the non-zero eigenvalues of the quadratic form $Q_\y$. 
As $\lambda_i(\y)$ are the eigenvalues of a symmetric matrix of norm $O(Y)$, we see that 
$|\lambda_i(\y)| \ll Y$. As $|\Delta(\y)| \gg Y^r$, we get that $\prod_{i=1}^r |\lambda_i(\y)| \gg Y^r$, whence we obtain the bound
\begin{equation}\label{eq:imp1}
Y \ll |\lambda_i(\y)| \ll Y
\end{equation} 
for each $1 \leq i \leq r$, with the implied constants depending only on the coefficients of the cubic form $C$.

Let $R_\y \in O_{n-h}(\RR)$ such that 
$Q_\y(R_\y\x) = \sum_{i=1}^r \lambda_i(\y)x_i^2$ is a diagonal quadratic form. Let $\z \in \RR^{n-h}$ be a real solution to the equation 
$$
\tilde{Q}_\y(\x) = \sum_{i=1}^{r}\frac{\lambda_i(\y)}{|\lambda_i(\y)|}x_i^2 = 0.
$$
Let $\|F_\y\|$ denote the largest coefficient (in absolute value) of the quadratic form $Q_\y(\x)$.
$$
u_\y(\x) = \prod_{i=1}^{n-h}w_0(z_i-x_i).
$$
Define
\begin{equation}\label{eq:wyxdefuy}
w_\y(\x) = u_\y(R_\y^tD\x),
\end{equation}
where 
$$
D=\diag(|\lambda_1(\y)|^{1/2},\ldots,|\lambda_{r}(\y)|^{1/2},\|F_\y\|^{1/2},\ldots,\|F_\y\|^{1/2}).
$$

For $P \geq 1$ set
$$
N(P,w_\y) = \sum_{\substack{\x \in \ZZ^{n-h} \\ F_\y(\x) = 0 \\ (\x,\y)=1}} w_\y(P^{-1}\x).
$$
Recall the counting function $N_\y(B)$ defined in~\eqref{eq:nyqyb}. Since $w_\y(\x)$ is non-negative, we see by setting $P = B\sqrt{Y}$ and~\eqref{eq:imp1} that 
\begin{equation}\label{eq:nybggnbsqrty}
N_\y(B) \gg N(B\sqrt{Y},w_\y).
\end{equation}
Let $\mathcal{C}_h(Y) \subset \ZZ^h \cap Y\Omega_{\infty}$ be a set such that the equation $F_\y = 0$ is soluble over $\ZZ_p$ for 
each prime $p$. Then we obtain
$$
N(B) \gg \sum_{\y \in \mathcal{C}_h(Y)}N(B\sqrt{Y},w_\y) .
$$
In order to get a lower bound for $N(B\sqrt{Y},w_\y)$, we will call upon work of Kumaraswamy~\cite[Theorem 1.1]{VVK24} to 
count solutions to the quadratic equation $F_\y = 0$. Note that all the hypotheses of that theorem are satisfied, as $Q_\y$ is indefinite of rank equal to $r \geq 5$ by construction, 
$Y \leq B^{2/3-\eta}$ and~\eqref{eq:imp1}.

For the rest of this section, we will set  
$
m = n-h
$ and define
\begin{equation}\label{eq:sqiqdef}
S_q(\y) = \sumstar_{a\bmod{q}}\sum_{\b \bmod{q}}e_q(aF_\y(\b)).
\end{equation}
Define the singular series
$$
\mathfrak{S}(F_\y) = \sum_{q=1}^{\infty}q^{-m}S_q(\y).
$$
If $\y \in \mathcal{C}_h(Y)$, then $\rank Q_\y \geq 5$, whence we have
$$
\mathfrak{S}(F_\y) = \prod_{p} \sigma_p(F_\y),
$$
where
$$
\sigma_p(F_\y) = \sum_{k=0}^{\infty}\frac{S_{p^k}(\y)}{p^{km}} = \lim_{t \to \infty} p^{-(m-1)t}N_\y(p^t),
$$
with
$$
N_\y(p^t) = \#\left\{\x \bmod{p^t} : F_\y(\x) \equiv 0 \bmod{p^t}\right\}.
$$ 

On applying~\cite[Theorem 1.1]{VVK24}, we will encounter the constant $\mathfrak{S}(F_\y)$. If $F_\y$ is soluble over $\ZZ_p$ for each prime $p$, then
it follows that $\mathfrak{S}(F_\y) > 0$. We will prove a lower bound for $\mathfrak{S}(F_\y)$ that is uniform in $\y$ on average. 

Let $\left\{\phi_1(\y),\ldots,\phi_J(\y)\right\}$ be the set of 
minors of order $3$ in $M[\y]$. Recall the variety
$$\mathcal{L} \subset \AA^h: \phi_1(\y) = \phi_2(\y) = \ldots = \phi_J(\y) = 0$$
that we used in the proof of Theorem~\ref{thmsolubility2}. We have the following results.
\begin{proposition}\label{propsingserieslbav}
Suppose that $\dim \mathcal{L} \leq h-2$. For each $Y \gg 1$, there exists $\mathcal{C}_h(Y) \subset \ZZ^h \cap Y\Omega_\infty$
such that if $\y \in \mathcal{C}_h(Y)$, the following statements hold:
\begin{enumerate}
\item For any $\ve > 0$ we have
$\mathfrak{S}(F_\y) \gg_{\ve} Y^{-\ve}.
$
\item The equation $F_\y=0$ is soluble over $\ZZ$ 
\item We have that $\gcd (y_1,\ldots,y_h) \ll_C 1$.
\end{enumerate}
\end{proposition}

\begin{proposition}\label{propsingserieslbav2}
Suppose that $\dim \mathcal{L} = h-1$. For each $Y \gg 1$, there exists $\mathcal{C}_h(Y) \subset \ZZ^h \cap Y\Omega_\infty$
such that if $\y \in \mathcal{C}_h(Y)$, then the equation $F_\y=0$ is soluble over $\ZZ$ and $\gcd (y_1,\ldots,y_h) = 1$.
Moreover, we have for any $\ve > 0$ that 
\begin{equation}\label{eq:54422}
\sum_{\y \in \mathcal{C}_h(Y)}\mathfrak{S}(F_\y) \gg_{\ve} Y^{h-\ve}.
\end{equation}
\end{proposition}
We will prove Proposition~\ref{propsingserieslbav} first. 
\subsubsection{When $\dim \mathcal{L} \leq h-2$}
The following is the key result which allows us to obtain strong lower bounds for $\mathfrak{S}(F_\y)$.
\begin{lemma}\label{lemmalbforsfy}
Let $\y \in \ZZ^h$ and assume that $\rank Q_\y = r\geq 5$. Suppose that there exists a constant $A = A(C)$ that depends only on the cubic form
$C$ such that 
\begin{equation}\label{eq:hyplemmalbforsfy}
\#\left\{\x \in \FF_p^m : F_\y(\x) = 0 \text{ and } \nabla F_\y(\x) \neq \mathbf{0}\right\}  \geq p^{m-1} + O(p^{m-2})
\end{equation}
for all $p \nmid A$. Suppose also that for each prime $p \mid A$, we have that there exists $v_p = v_p(C) \geq 1$ such that the set 
\begin{equation}\label{eq:hyp2lemmalbforsfy}
\begin{split}
\left\{ \x \bmod{p^{2v_p-1}} \right.: 
&\left.F_\y(\x) \equiv 0 \bmod{p^{2v_p-1}} \text{ and } \nabla F_\y(\x) \not\equiv \mathbf{0} \bmod{p^{v_p}}\right\}
\end{split}
\end{equation} 
is non-empty. Then we have that $\mathfrak{S}(F_\y) \gg (1+|\y|)^{-\ve}$.
\end{lemma}
\begin{proof}

Let $\phi_1(\y),\ldots,\phi_J(\y)$ be the minors of 
$M[\y]$ of order $r$. Then $\phi_j(\y) \neq 0$ for some $j$, by assumption. Set $$\mathcal{P}(\y) = \left\{p \text{ prime} : p\nmid A \gcd(\phi_1(\y),\ldots,\phi_J(\y))\right\}.$$ 
As we have already remarked, the singular series converges absolutely as $\rank Q_\y \geq 5$. So we may write
$$
\mathfrak{S}(F_\y) = \left(\prod_{p \in \mathcal{P}(\y)} \sigma_p(F_\y)\right)
\left(\prod_{p \not\in \mathcal{P}(\y)} \sigma_p(F_\y)\right).
$$
We have
$$
\sigma_p(F_\y) = 1 + \sum_{k=1}^{\infty}\frac{S_{p^k}(\y)}{p^k},
$$
where $S_{p^k}(\mathbf{0})$ is as in~\eqref{eq:sqiqdef}. Then
if $p \in \mathcal{P}(\y)$, we obtain from~\cite[Lemma 2.4]{VVK24} that 
$$
\sigma_p(F_\y) = 1 + O(p^{-3/2}),
$$
since $\rank Q_\y \geq 5$, where the implied constant is independent of $F_\y$. As a result, we get that
\begin{equation}\label{eq:singseriesawayfrombadprimes}
\left(\prod_{p \not\in \mathcal{P}(\y)} \sigma_p(F_\y)\right) \gg 1.
\end{equation}

Suppose next that~\eqref{eq:hyplemmalbforsfy} holds, we get from Lemma~\ref{lemmadavhensel} that 
$$
N_\y(p^t) \geq p^{t(m-1)} + O(p^{t(m-1)-1}).
$$
As a result, we get for any $p \nmid A$ that
$
\sigma_p(F_\y) \gg 1-c/p,
$
for some $c > 0$, independent of $p$. 

Finally, if $p \mid A$, we get from~\eqref{eq:hyp2lemmalbforsfy} and
Lemma~\ref{lemmadavhensel} that $N_\y(p^t) \gg p^{t(m-1)}$ if $t > 2v_p$, where the implicit constant
depends only on $C$, by hypothesis. This implies that
$\sigma_p(F_\y) \gg_C 1$ if $p \mid A$. 
As a result, we get for any $\ve > 0$ that
\begin{align*}
\left(\prod_{p \in \mathcal{P}(\y)} \sigma_p(F_\y)\right) &\gg \prod_{p \mid A}\sigma_p(F_\y)\prod_{p \mid \gcd(\phi_1(\y),\ldots,\phi_J(\y))} \left(1-\frac{c}{p}\right) \\
&\gg_{\ve} (1+|\y|)^{-\ve},
\end{align*}
since $\phi_i(\y) \ll_C (1+|\y|)^r$ for each $i$. The lemma follows from~\eqref{eq:singseriesawayfrombadprimes}.
\end{proof}
\begin{proof}[Proof of Proposition~\ref{propsingserieslbav}]
If $\dim \mathcal{L} \leq h-2$, let 
$\mathcal{C}_h(Y) \subset \ZZ^h \cap Y\Omega_\infty$ be the set we obtain from applying Theorem~\ref{thmsolubility2}. Let $\y \in \mathcal{C}_h(Y)$. 
The first statement of the proposition
follows from Lemma~\ref{lemmalbforsfy}. As $\mathfrak{S}(F_\y) > 0$, we obtain that $F_\y=0$ is soluble over $\ZZ_p$ for each prime $p$. Moreover,
by definition, any such $\y$ also lies in $Y\Omega_\infty$, whence we get that $F_\y=0$ has a real solution, which shows that $F_\y$ is everywhere locally soluble. 
Therefore, by the Hasse principle for quadratic polynomials, $F_\y=0$ is soluble over $\ZZ$. The third statement follows from Remark~\ref{remgcd} 
and this completes the proof of the proposition.  
\end{proof}
\subsubsection{When $\dim \mathcal{L} = h-1$}
By Lemma~\ref{lemmarank3ormore}, we get after making a linear change of variables that
$$
F_\y(\x) = C(\x,\y) = \sum_{i=1}^h y_iF_i(\x) + \sum_{j=1}^{m}x_jq_j(\y) + R(\y),
$$
where $m = n-h$, $F_1(\x)$ is an indefinite quadratic form of rank equal to $r$ and $\rank \sum_{i=2}^hy_iF_i(\x) \leq 2$ for every $(y_2,\ldots,y_h) \in \AA^{h-1}$.
Note that the polynomials $F_i$, $q_j$ and $R$ could be different to the ones we started with. We are recycling notation for ease of exposition.

For the remainder of this section, we set
\begin{equation}\label{eq:sdefn}
s = \rank_L \sum_{i=2}^h y_iF_i(\x)
\end{equation}
where $L = \QQ(y_2,\ldots,y_h)$. Although our construction of the set $\mathcal{C}_h(Y)$ will 
depend on whether $s = 0, 1$ or $2$, we will adopt a similar approach in each case, which we explain below. 

We may assume without loss of generality that $F_1$ is non-singular of rank $r$ with discriminant $D$. Let $M$ denote the
product of all the coefficients of $C$. 
Define 
\begin{equation}\label{eq:defaprod}
A = 2\prod_{p \mid DM}p.
\end{equation}
For each prime $p \mid A$, let $v_p$ and $\r_p = (r_p^{(1)},\ldots,r_p^{(h)})$ 
denote the exponent and residue class that we 
obtain from  
applying Lemma~\ref{lemma0}. Let $\delta$ be a small fixed constant such that
$\delta^{-1} > \prod_{p \mid A} p^{2v_p-1}$ and let $\mathcal{P}(\delta Y)$ denote the set of 
primes in the interval $[\delta Y/2, \delta Y]$ that are congruent to $r_p^{(1)} \bmod{p^{2v_p-1}}$
for every $p \mid A$. We will always choose $\mathcal{C}_h(Y)$ so that if $\y$ belongs to it, then $y_1 \in \mathcal{P}(\delta Y)$. 
Then by arguing as we did to
obtain~\eqref{eq:singseriesawayfrombadprimes}, 
we get that
\begin{equation*}\label{eq:boundawayfrombadprimescodim1}
\prod_{p \nmid Ay_1}\sigma_p(F_\y) \gg 1.
\end{equation*}
Our choice of 
$\mathcal{C}_h(Y)$ be will be strongly infulenced by studying the solubility of $F_\y$ over $\FF_{y_1}$. Finally, for the finitely many primes $p \mid A$, 
we will ensure that~\eqref{eq:hyp2lemmalbforsfy} holds by restricting $\y \in \mathcal{C}_h(Y)$
to lie in the arithmetic progression modulo $\r_p \bmod{p^{2v_p-1}}$.

\begin{lemma}\label{lemmas0}
Suppose that $s = 0$ and that $h(C) \geq 6$. Then for each $Y \gg 1$ 
there exists a set $\mathcal{C}_h(Y) \subset \ZZ^h \cap Y\Omega_\infty$ such that 
$$
\sum_{\y \in \mathcal{C}_h(Y)}\mathfrak{S}(F_\y) \gg Y^h/\log Y.
$$
\end{lemma}
\begin{proof}
If $s = 0$, then we see that $F_i(\x) = 0$ for each $2 \leq i \leq h$. Suppose first that there exist linear forms 
$l_i$ such that $q_i(\y) = y_1l_i(\x)$ for each $1 \leq i \leq m$. Then we have
$$
C(\x,\y) = y_1F_1(\x) + y_1\sum_{i=1}^m x_il_i(\y)+ R(\y).
$$
Observe that $C(\x,0,y_2,\ldots,y_h) = R(0,y_2,\ldots,y_h)$ is a cubic form in $n-1$ variables with $h$-invariant at least $h(C)-1$. As a result, we see that
$R(0,y_2,\ldots,y_h)$ is a cubic form in $h(C)-1$ variables whose $h$-invariant is equal to $h(C)-1$, which implies, in particular that it is non-degenerate.
As a result, we find that the equation
$R(0,y_2,\ldots,y_h) = 0$ has no non-trivial solutions over $\FF_p$ for large enough $p$. However, according to~\cite[Theorem 3]{DL62},
any non-degenerate cubic form with $h$-invariant at least $5$ has a non-trivial solution over $\FF_p$ for any prime $p$. This 
is a contradiction, since $R(0,y_2,\ldots,y_h)$ has $h$-invariant at least $5$.

As a result, we may assume that $y_1 \nmid q_i$ for some $1 \leq i \leq m$. Let $y_1 \in \mathcal{P}(\delta Y)$. This implies that the equation
$q_i(0,z_2,\ldots,z_h)$ is not identically zero over $\FF_{y_1}$. As a result, if we set
$$
S(y_1) = \left\{(z_2,\ldots,z_h) \in \FF_{y_1}^{h-1} : q_i(0,z_2,\ldots,z_h) \neq 0\right\},
$$
we must have that $|S(y_1)| \gg y_1^{h-1}$. Let $A$ be as in~\eqref{eq:defaprod} and let $v_p$ and $\r_p$ be the exponents
obtained by applying Lemma~\ref{lemma0} for each $p \mid A$.  Define
\begin{align*}
\mathcal{C}_h(Y) = \left\{\y \in \ZZ^h \cap Y\Omega_{\infty} : \right.&\, y_1 \in \mathcal{P}(\delta Y), (y_2,y_1)=1, \text{ for each } p \mid A \\
&\left. (y_2,\ldots,y_h) \equiv (r_p^{(2)},\ldots,r_p^{(h)}) \bmod{p^{2v_p-1}} \right.\\
&\left.  (y_2,\ldots,y_h) \bmod{y_1} \in S(y_1) \right\}.
\end{align*}
Then it is easy to see that $|\mathcal{C}_h(Y)| \gg Y^h/\log Y$.

Let $\y \in \mathcal{C}_h(Y)$. We will now show that $\mathfrak{S}(F_\y) \gg 1$. Indeed, 
if $p \mid A$, then the estimate~\eqref{eq:hyp2lemmalbforsfy} holds by construction. 
If $p \nmid A$, and $p \neq y_1$, then $\rank_{\FF_p} F_\y \geq 3$ and the
estimate~\eqref{eq:hyplemmalbforsfy} follows from Lemma~\ref{lemmasolvequadricsmodp}. As a result, we obtain that
\begin{equation}\label{eq:singseriesneqy1}
\prod_{p \neq y_1}\sigma_p(F_\y) \gg 1.
\end{equation}
Finally, 
if $p = y_1$, we see that 
$\nabla F_\y(\x) \not\equiv \mathbf{0}$ for any $\x \in \FF_{y_1}^m$. As $F_\y(\x)$ is a linear equation modulo $y_1$, we see that
$$
N_{\y}(y_1^t) \gg y_1^{t(m-1)},
$$
for all $t \geq 1$, whence we get that $\sigma_{y_1}(F_\y) \gg 1$, and the lemma follows.


\end{proof}

Next, we deal with the case where $s = \rank_L \sum_{i=2}^hy_iF_i(\x) = 1$. Then by an application of Lemma~\ref{lemma4w}, we get
after a linear change of variables that $F_i(\x) = \alpha_ix_1^2$ for some $\alpha_i \in \ZZ$, not all equal to $0$. After making 
another change of variables,
we may suppose that there exists $\alpha \neq 0$ such that
\begin{equation}\label{eq:eqncs1}
C(\x,\y) = y_1F_1(\x) + \alpha y_2x_1^2 + \sum_{i=1}^m x_iq_i(\y) + R(\y),
\end{equation}
with $F_1(\x)$ an indefinite, non-singular quadratic form of rank $r$. We will need the following lemma later on in the argument. 

\begin{lemma}\label{lemmacalsingloc}
Let $R(y_1,\ldots,y_m)$ be a cubic form with $h$-invariant $h(R).$ Suppose that $h(R) \geq 3$ and let $q(y_1,\ldots,y_m)$ be a
quadratic form. Set $f(\y) = q^2(\y)-4y_1R(\y)$. Let $\sing f \subset \AA^m$ denote the singular locus of $f$. Then 
$\dim \sing f \leq m-2$. 
\end{lemma}
\begin{proof}
In order to obtain a contradiction, assume that $\dim \sing f = m-1$. This implies that
there exists a polynomial $\phi(\y)$ such that $\phi \mid \gcd\left(\frac{\partial f}{\partial y_1},\ldots,\frac{\partial f}{\partial y_m}\right)$.
If $y_1 \mid \phi$, then we get that $y_1 \mid \frac{\partial f}{\partial y_1}$ and that $y_1 \mid f(\y)$. We may then deduce that
$y_1 \mid R(\y)$, which contradicts our assumption that $h(R) \geq 3$. As a result, we must have that $y_1 \nmid \phi$.

Let $\psi(\z) = \phi(0,\z)$. Then $\psi$ doesn't vanish identically. 
Observe that $\psi(\z) \mid f^{(i)}(0,\z)$ for each $1 \leq i \leq m$. This in turn implies
that $\psi(\z) \mid f(0,\z)$, whence we get that $\psi(\z) \mid q(0,\z)$. Using the fact that $\psi(\z) \mid \frac{\partial f(0,\z)}{\partial z_1}$,
we also obtain that $\psi(\z) \mid - 2q(0,\z)\frac{\partial q(0,\z)}{\partial z_1}+R(0,\z) $, whence we get that $\psi(\z) \mid R(0,\z)$. However,
this forces $h(R(0,\z)) \leq 1$, which contradicts our assumption that $h(R(0,\z)) \geq 2$. This completes the proof of the lemma.
\end{proof}

\begin{lemma}\label{lemmas1}
Suppose that $s = 1$. Assume that $h(C) \geq 5$. Then for each $Y \gg 1$ there exists a set $\mathcal{C}_h(Y) \subset \ZZ^h \cap Y\Omega_\infty$ such that 
$$
\sum_{\y \in \mathcal{C}_h(Y)}\mathfrak{S}(F_\y) \gg Y^h/\log Y.
$$
\end{lemma}
\begin{proof}
Let $C$ be as in~\eqref{eq:eqncs1}. Suppose first that $y_1 \nmid q_t(\y)$ for some $2 \leq t \leq m$. We will assume without
loss of generality that $t = 2$. 
Define
\begin{align*}
\mathcal{C}_h(Y) = \left\{\y \in \ZZ^h \cap Y\Omega_{\infty} : \right.&\,y_1 \in \mathcal{P}(\delta Y), (y_2,y_1)=1, \text{for each } p \mid A \\
&\left. (y_2,\ldots,y_h) \equiv (r_p^{(2)},\ldots,r_p^{(h)}) \bmod{p^{2v_p-1}}\right.\\
&\left. \text{and }q_2(0,y_2,\ldots,y_h) \not\equiv 0 \bmod{y_1}
\right\},
\end{align*}
where $\mathbf{r}_p$ and $A$ are as before. As $y_1 \nmid q_2(\y)$,
we get that the set of $\z \in \FF_{y_1}^{h-1}$ such that $q_2(0,\z) = 0$ has cardinality $O(y_1^{h-2})$.
Thus we get that
$\mathcal{C}_h(Y) \gg Y^h/\log Y$.

Turning to estimating $\mathfrak{S}(F_\y)$, note that by the same argument as in Lemma~\ref{lemmas0}, we have 
that~\eqref{eq:singseriesneqy1}
holds. Finally, 
over $\FF_{y_1}$, we find that 
$$
F_\y(\x) = \alpha y_2 x_1^2 + \sum_{i=1}^m x_iq_i(0,\y) + R(0,\y)
$$
and
$$
\nabla F_\y(\x) = \left(2\alpha y_2x_1 + q_1(0,\y),q_2(0,\y),\ldots,q_m(0,\y)\right).
$$
If $\y \in \mathcal{C}_h(Y)$, using the fact that $q_2(0,\y) \in \FF_{y_1}^*$, we get that
the number of non-singular
solutions to $F_\y = 0$ over $\FF_{y_1}$ is equal to $y_1^{m-1}$. Therefore, we have
by Lemma~\ref{lemmadavhensel} that $N_{\y}(y_1^t) \gg y_1^{t(m-1)}$ for each
$t \geq 1$, whence $\sigma_{y_1}(F_\y) \gg 1$. This shows that $\mathfrak{S}(F_\y) \gg 1$ and the lemma follows.

Suppose next that $y_1 \mid q_t(\y)$ for each $2 \leq t \leq m$.
Define
\begin{align*}
\mathcal{C}_h(Y) = \left\{\y \in \ZZ^h \cap Y\Omega_{\infty} : \right.&\, y_1 \in \mathcal{P}(\delta Y), \text{ for each } p \mid A \\
&\left. (y_2,\ldots,y_h) \equiv (r_p^{(2)},\ldots,r_p^{(h)}) \bmod{p^{2v_p-1}}, \right.\\
&\left. (y_2,y_1)=1 \text { and } \right. \\
&\left. q_1^2(0,y_2,\ldots,y_h) - 4\alpha y_2R(0,y_2,\ldots,y_h) \text{ is}\right.\\
&\left.\text{a non-zero quadratic residue modulo }y_1\right\}.
\end{align*}
Observe that if we set $x_1 = y_1 = 0$ in~\eqref{eq:eqncs1}, we get $R(0,y_2,\ldots,y_h)$. 
As a result, we see that $h(R(0,y_2,\ldots,y_h)) \geq 3$. Let $f(\y) = q_1^2(0,\y) - 4\alpha y_2R(0,\y)$. By Lemma~\ref{lemmacalsingloc},
we get that $\dim \sing f \leq h-3$, and therefore by Lemma~\ref{lemmakatz} we get that $\mathcal{C}_h(Y) \gg Y^h/\log Y$.

By the argument used earlier in the lemma, we get for any $\y \in \mathcal{C}_h(Y)$ that~\eqref{eq:singseriesneqy1} holds.
All that remains is to estimate $\sigma_{y_1}(F_\y)$. Since $y_1 \mid q_t(\y)$ for each $2 \leq t \leq m$, by assumption, we see that 
$$
F_\y(\x) = \alpha y_2x_1^2 + x_1q_1(0,\y) + R(0,\y)
$$
modulo $y_1$.
If $q_1^2(0,\y)-4\alpha y_2R(0,\y)$ is a non-zero quadratic residue modulo $y_1$, we get from Lemma~\ref{lemmaevalgauss} 
and~\eqref{eq:evalexpsums} (or by direct computation) that
$F_\y(\x) = 0$ has precisely $2y_1^{m-1}$ non-singular solutions over $\FF_{y_1}$. As a result, we 
obtain from Lemma~\ref{lemmadavhensel} that $N_{\y}(y_1^t) \gg y_1^{t(m-1)}$ for all $t\geq 1$, which implies that
that $\sigma_{y_1}(F_\y) \gg 1.$ This completes the proof of the lemma.
\end{proof}

Finally, suppose that $s = 2$. Set $\z = (y_2,\ldots,y_h)$ so that $\y = (y_1,\z)$. Then we obtain from Lemma~\ref{lemmarank2overk} 
that there exist integer linear forms $l_1(\z), l_2(\z)$ and $l_3(\z) \in L$ such that $4l_1l_3-l_2^2 \in L^*$ 
\begin{equation}\label{eq:rank2option1}
C(\x,\y) = y_1F_1(\x) + l_1(\z) x_1^2+ l_2(\z)x_1x_2 + l_3(\z) x_2^2 + \sum_{i=1}^m x_iq_i(\y) + R(\y),
\end{equation}
or  
\begin{equation}\label{eq:rank2option2}
C(\x,\y) = y_1F_1(\x) + \tilde{l}_2(\z)x_1x_2 + x_1\sum_{i=3}^m\tilde{l}_i(\z)x_i + \sum_{i=1}^mx_iq_i(\y) + R(\y),
\end{equation}
with $\tilde{l}_i\in L$ linear forms such that $\tilde{l}_2(\z) \in L^*$. In either case, we may
assume that $F_1$ is non-singular with rank $r$.
\begin{lemma}\label{lemmas2}
Suppose that $s = 2$. Suppose that if $C$ is as in~\eqref{eq:rank2option1} then $\dim_L V \neq 1$, 
where $V = \spn_{\QQ}\left\{l_1,l_2,l_3\right\}$,
or that $C$ is as in~\eqref{eq:rank2option2}.
Then there exists a set $\mathcal{C}_h(Y) \subset \ZZ^h \cap Y\Omega_\infty$ such that 
$$
\sum_{\y \in \mathcal{C}_h(Y)}\mathfrak{S}(F_\y) \gg Y^h/\log Y.
$$
\end{lemma}
\begin{proof}
Define 
\begin{equation}\label{eq:defg}
G = \begin{cases} 4l_1l_3-l_2^2 &\mbox{ if $C$ is of the form~\eqref{eq:rank2option1},} \\
4\tilde{l}_2^2 &\mbox{ if $C$ is of the form~\eqref{eq:rank2option2}}.
\end{cases}
\end{equation}
Note that $G \in L^*$ by assumption. We set
\begin{equation*}
\begin{split}
\mathcal{C}_h(Y) = \left\{\y \in \ZZ^h \cap Y\Omega_{\infty} : \right.&\, y_1 \in \mathcal{P}(\delta Y), (y_2,y_1) = 1, \\
&\left. G(y_2,\ldots,y_h) \not\equiv 0 \bmod{y_1}, \right.\\
&\left. \jacobi{G(y_2,\ldots,y_h)}{y_1} = 1 \text{ and for each } p \mid A \right.\\
&\left. (y_2,\ldots,y_h) \equiv (r_p^{(2)},\ldots,r_p^{(h)}) \bmod{p^{2v_p-1}} \right\}.
\end{split}
\end{equation*}
We claim that $\mathcal{C}_h(Y) \gg Y^h/\log Y$. This is obvious if $G = 4\tilde{l}_2^2$. So suppose that $G = 4l_1l_3-l_2^2$. 
As $\dim_L V \neq 1$, by hypothesis of the lemma, it is easy to see that $\dim \sing G \leq h-3$ 
and the claim now follows from
Lemma~\ref{lemmakatz}.

Let $\y \in \mathcal{C}_h(Y)$. 
We must now obtain a lower bound for $\mathfrak{S}(F_\y)$. By arguing as in the proof of Lemma~\ref{lemmas0}, 
we see that~\eqref{eq:singseriesneqy1} holds,
and all that remains is to consider the behaviour of $\sigma_p(F_\y)$ for $p = y_1$.

If $F_\y$ is as in~\eqref{eq:rank2option2}, then over $\FF_{y_1}$ we may make the linear change of variables
$$\tilde{l}_2(y_2,\ldots,y_h)x_2+\sum_{i=3}^m \tilde{l}_i(y_2,\ldots,y_h)x_i \mapsto x_2,
$$
and observe that $F_\y$ has the form
$$
x_1x_2 + x_1q_1(0,\y) + \sum_{i=1}^m B_ix_i + N
$$
for some $B_i$ and $N \in \FF_{y_1}$.

If $F_\y$ is as in~\eqref{eq:rank2option1}, then over $\FF_{y_1}$, we see that
$$
F_\y(\x) =  l_1(\y) x_1^2+ l_2(\y)x_1x_2 + l_3(\y) x_2^2 + \sum_{i=1}^m B_ix_i + N,
$$
for some $B_i$ and $N \in \FF_{y_1}$. If $(B_i,y_1) = 1$, for some $i \geq 3$, as we have observed previously,
we obtain the estimate $N_{y_1}(F_\y) = y_1^{m-1}$, whence we get that $\sigma_{y_1}(F_\y) = 1$, which
is sufficient to deduce the lemma.

Therefore, we may suppose that $B_i = 0$ for each $3 \leq i \leq m$. As a result, over $\FF_{y_1}$, the equation $F_\y$ is of the form
\begin{equation}\label{eq:finals2}
\x^tM\x + \mathbf{B}^t\x + N
\end{equation}
where $M$ is given by one of the following two matrices,
$$
\begin{pmatrix} 0 & 1/2 \\ 1/2 & 0 
\end{pmatrix} \, \, \text{ or } \, \, \begin{pmatrix} l_1(\z) & l_2(\z)/2 \\ l_2(\z)/2 & l_3(\z) \end{pmatrix},
$$
$\mathbf{B} = (B_1,B_2) \in \FF_{y_1}^2$ and $N \in \FF_{y_1}$.

By Lemma~\ref{lemmaevalgauss}, we get that the number of non-singular solutions to~\eqref{eq:finals2} is equal to
\begin{equation}\label{eq:temp534}
y_1^{m-2}\left\{y_1 + \jacobi{-\det M}{y_1}y_1K_2(4N-\bs{B}^tM^{-1}\bs{B},y_1) - \kappa_{y_1}\right\}.
\end{equation}
Note that (up to squares) $-\det M = G$, where $G$ is as in~\eqref{eq:defg}. Observe by~\eqref{eq:evalexpsums} that~\eqref{eq:temp534}
 is $\gg y_1^{m-1}$, 
so long as $\jacobi{G}{y_1} = 1$, which holds by construction of $\mathcal{C}_h(Y)$. Thus we get by applying Lemma~\ref{lemmadavhensel} that
$N_{\y}(y_1^t) \gg y_1^{t(m-1)}$ for all $t \geq 1$, whence we get that $\sigma_{y_1}(F_\y) \gg 1$ and this completes the proof of the lemma.
\end{proof}

\begin{lemma}\label{lemmas22}
Suppose that $s = 2$. Suppose that $C$ is as in~\eqref{eq:rank2option1} and that $\dim_L V = 1$, 
where $V = \spn_{\QQ}\left\{l_1,l_2,l_3\right\}$.  
Then there exists a set $\mathcal{C}_h(Y) \subset \ZZ^h \cap Y\Omega_\infty$ such that 
$$
\sum_{\y \in \mathcal{C}_h(Y)}\mathfrak{S}(F_\y) \gg Y^h/\log Y.
$$
\end{lemma}
\begin{proof}
As $\dim_L V = 1$, we may assume without loss of generality that $l_1 = Ay_2, l_2 = By_2, l_3 = Cy_2$, for some $A, B, C \in \ZZ$ 
such that $B^2-4AC \neq 0$. The proof will be similar to that of Lemma~\ref{lemmas2}, so we will only point out the 
key differences.

Let $J = 4AC - B^2$. Then by quadratic reciprocity, there exist integers $r$ and $\Delta$ (depending on $J$)
such that if $p \equiv r \bmod{\Delta}$ then
$\jacobi{J}{p} = 1$. Consider the cubic polynomial
$$
\tilde{C}(\x,\y) = C(\x,\Delta y_1 + r, y_2,\ldots,y_h).
$$
Observe that $\tilde{C}$ is irreducible, given that $C$ is irreducible. As the set of $\QQ_p$ rational points of 
$C$ are Zariski dense and since $\tilde{C}(\x,0,y_2,\ldots,y_h) = C(\x,r,y_2,\ldots,y_h)$, we see that $\tilde{C}$ has a non-singular
solution over $\QQ_p$ for any $p$. Furthermore, it is clear that the partial derivatives $\frac{\partial \tilde{C}}{\partial x_i}$ cannot 
all vanish. Thus all the hypotheses of Lemma~\ref{lemma4w} are satisfied. 

We redefine the parameter $A$ from~\eqref{eq:defaprod} as follows:
$$
A = 2\prod_{p \mid D\Delta M}p,
$$ 
where $D$ is the discriminant of $F_1$ and $M$ is the product of coefficients of $C$.

For each prime $p \mid A$, let $v_p$ and $\r_p = (r_p^{(1)},\ldots,r_p^{(h)})$ 
denote the exponent and residue class that we 
obtain from  
applying Lemma~\ref{lemma0} to $\tilde{C}$. Let $\delta$ be a small fixed constant such that
$\delta^{-1} > \prod_{p \mid A} p^{2v_p-1}$ and let $\mathcal{P}(\delta Y)$ denote the set of 
primes in the interval $[\delta Y/2, \delta Y]$ that are congruent to $r_p^{(1)} \bmod{p^{2v_p-1}}$
for every $p \mid A$. As before, we will choose $y_1 \in \mathcal{P}(\delta Y)$. Observe
now that if $Y$ is large enough, then we have ensured by construction of $\tilde{C}$ that $\jacobi{J}{p} = 1$. 

Define
\begin{equation*}
\begin{split}
\mathcal{C}_h(Y) = \left\{\y \in \ZZ^h \cap Y\Omega_{\infty} : \right.&\, y_1 \in \mathcal{P}(\delta Y), (y_2,y_1) = 1 \\
&\left. \text{and for each } p \mid A \right.\\
&\left. (y_2,\ldots,y_h) \equiv (r_p^{(2)},\ldots,r_p^{(h)}) \bmod{p^{2v_p-1}} \right\}.
\end{split}
\end{equation*}
Then it is easily verified that $|\mathcal{C}_h(Y)| \gg Y^h/\log Y$. 

To estimate $\mathfrak{S}(F_\y)$ for $\y \in \mathcal{C}_h(Y)$, we will
follow the argument from Lemma~\ref{lemmas2}. In particular, it suffices to analyse the behaviour of~\eqref{eq:finals2}. 
In this case, we see that $-\det M = Jy_2^2$, whence we obtain $\jacobi{-\det M}{y_1} = 1$. Thus we get $\mathfrak{S}(F_\y) \gg 1$ and 
the lemma follows. 
\end{proof}

\begin{proof}[Proof of Proposition~\ref{propsingserieslbav2}]
If $\dim \mathcal{L} = h-1$,~\eqref{eq:54422} follows from taking
$\mathcal{C}_h(Y)$ to be as in Lemmas~\ref{lemmas0},~\ref{lemmas1},
~\ref{lemmas2}, or~\ref{lemmas22} depending on the value of $s$ (see~\eqref{eq:sdefn}). In each case, we see that $\gcd (y_1,\ldots,y_h) = 1$ for any $\y \in \mathcal{C}_h(Y)$,
by construction, as we have ensured that $(y_1,y_2)=1$. Next, observe that by our
analysis of $\mathfrak{S}(F_\y)$, we obtain that $F_\y=0$ is soluble over $\ZZ_p$ for each prime $p$, whenever $\y \in \mathcal{C}_h(Y)$. As $\y$ also lies in $Y\Omega_\infty$, 
we also get real solubility for the equation $F_\y = 0$.  
Therefore, by the Hasse principle for quadratic polynomials, $F_\y=0$ is soluble over $\ZZ$. This completes the proof of the proposition.
 \end{proof}
\subsection{Proof of Theorem~\ref{thma2}}
Suppose that Hypothesis~\ref{h1} holds. Let $\Omega_{\infty}$ be as in \S~\ref{secaffquad}. Let $1 \leq Y \leq B^{2/3-\eta}$ for some $\eta > 0$. 
Define
\begin{equation*}
\mathcal{E}(B,Y) = B^{m-\frac{r}{2}-\frac{1}{2}}Y^{\frac{m}{2}+\frac{r-3}{4}} + B^{m-\frac{r}{2}-\frac{\kappa}{2}}Y^{\frac{m}{2}+\frac{r+\kappa}{4}-2}.
\end{equation*}
Our task 
now will be to show that
\begin{equation}\label{eq:nbbm-2yh-1}
N(B) \gg_{\ve} B^{m-2}Y^{h-1-\ve} + O_{\ve}((BY)^{\ve}\mathcal{E}(B,Y)Y^h).
\end{equation}
Then Theorem~\ref{thma2} will follow by choosing $Y$ appropriately.

Let $\mathcal{L}$ be the variety defined in~\eqref{eq:444315}. 
Suppose that $\dim \mathcal{L}=h-1$ and let $\mathcal{C}_h(Y) \subset \ZZ^h \cap Y\Omega_\infty$
be as in Proposition~\ref{propsingserieslbav2}. Let $\y \in \mathcal{C}_h(Y)$. Then $\gcd(y_1,\ldots,y_h)=1$. 
Recall the counting function $N_\y(B)$ from~\eqref{eq:nyqyb}. We have by~\eqref{eq:nybggnbsqrty} that
\begin{align*}
N_\y(B) \gg N(B\sqrt{Y},w_\y),
\end{align*}
where $w_\y$ is as in~\eqref{eq:wyxdefuy} and 
$$
N(B\sqrt{Y},w_\y) =  \sum_{\substack{F_\y(\x) = 0}}w_\y\left(\left(B\sqrt{Y}\right)^{-1}g\x\right).
$$ 
Set 
\begin{equation*}
\kappa = \begin{cases}0 & \mbox{ if $2 \mid r,$} \\ 1 &\mbox{ otherwise.}
\end{cases}
\end{equation*}

We get from Theorem~\cite[Theorem 1.1]{VVK24}, that for any $\ve > 0$ we have
\begin{equation}\label{eq:lbnbsqrtywy}
N(B\sqrt{Y},w_\y) = \frac{\sigma_{\infty}(\tilde{Q}_\y,u_\y)\mathfrak{S}(F_\y)B^{m-2}Y^{\frac{m}{2}-1}}{\prod_{i=1}^r|\lambda_i(\y)|^{\frac{1}{2}}\|F_{\y}\|^{\frac{m-r}{2}}} + O((BY)^{\ve}\mathcal{E}(B,Y)).
\end{equation}
Moreover, we get by using the fact that $\tilde{Q}_{\y}(\x)$ has a solution in the support of $u_{\y}$ that
\begin{equation}\label{eq:boundsingintmainthm}
\sigma_{\infty}(\tilde{Q}_\y,u_\y) \gg 1
\end{equation}
independent of $\y$. Thus we obtain 
from~\eqref{eq:imp1},~\eqref{eq:lbnbsqrtywy} and~\eqref{eq:boundsingintmainthm}
that
\begin{equation*}\label{eq:conclusioncodim2}
N_\y(B) \gg N(B\sqrt{Y},w_\y) \gg_{\ve} \mathfrak{S}(F_\y)B^{m-2}Y^{-1} + O_{\ve}((BY)^{\ve}\mathcal{E}(B,Y)).
\end{equation*}
Summing over $\y \in \mathcal{C}_h(Y)$ we get from Proposition~\ref{propsingserieslbav2} that~\eqref{eq:nbbm-2yh-1} holds.

Suppose next that $\dim \mathcal{L} \leq h-2$. We will show that~\eqref{eq:nbbm-2yh-1} holds in this case as well. 
Let $\mathcal{C}_h(Y)$ be the set we obtain from Proposition~\ref{propsingserieslbav}. Let $\y \in \mathcal{C}_h(Y)$ and 
let $g = g_\y = \gcd(y_1,\ldots,y_h)$. Then $g \ll_C 1$. In this case we have
\begin{align*}
N_\y(B) &\gg \sum_{\substack{(\x,g)=1 \\ F_\y(\x) = 0}}w_\y\left(\left(B\sqrt{Y}\right)^{-1}g\x\right) \\
&\gg \sum_{d \mid g}\mu(d)\sum_{\substack{F_\y(d\x) = 0}}w_\y\left(d\left(B\sqrt{Y}\right)^{-1}\x\right).
\end{align*}
Observe that
$
F_\y(d\x) = C(d\x,\y) = d^3 C(\x,\y/d) = d^3 F_{\y/d}(\x).
$
As $\y/d \in \mathcal{C}_h(Y)$, we see that $\mathfrak{S}(F_{\y/d}) \gg Y^{-\ve}$. As a result, we may proceed as before to conclude once again that
$$
N_\y(B)\gg_{\ve} \mathfrak{S}(F_\y)B^{m-2}Y^{-1} + O_{\ve}((BY)^{\ve}\mathcal{E}(B,Y)).
$$ 
Using the lower bound $\mathfrak{S}(F_\y) \gg Y^{-\ve}$ and summing over $\mathcal{C}_h(Y)$, we get that~\eqref{eq:nbbm-2yh-1} holds, as required.

All that remains now is to choose $Y$ in~\eqref{eq:nbbm-2yh-1} so that the main term exceeds the error term. On writing
\begin{align*}
\mathcal{E}(B,Y) = B^{m-\frac{r}{2}}Y^{\frac{m}{2}+\frac{r-3}{4}}\left\{B^{-\frac{1}{2}} + B^{-\frac{\kappa}{2}}Y^{\frac{\kappa-5}{4}}\right\},
\end{align*}
we see that $$
\max\left\{B^{-\frac{1}{2}}, B^{-\frac{\kappa}{2}}Y^{\frac{\kappa-5}{4}}\right\} = 
\begin{cases} B^{-\frac{\kappa}{2}}Y^{\frac{\kappa-5}{4}} &\mbox{ if $\kappa = 0$ and $Y \ll B^{\frac{2}{5}}$},\\
B^{-\frac{1}{2}} &\mbox{ otherwise.}
\end{cases}
$$ 
Recall that $m = n-h$ and define for $\ve > 0$
\begin{align*}
Y &= \begin{cases} B^{\frac{2(r-4)}{2m+r-4}-\ve} &\mbox{ if $r$ is even and $2r-m < 8$,} \\
B^{\frac{2(r-3)}{2m+r+1}-\ve} &\mbox{ otherwise.}
\end{cases}
\end{align*}
With this choice of $Y$, we see that the error term in~\eqref{eq:nbbm-2yh-1} is smaller than the main term and we get that
\begin{align*}
N(B) &\gg_{\ve} B^{n-h-2}Y^{h-1-\ve}. 
\end{align*}
This completes the proof of Theorem~\ref{thma2}.

\section{Proof of Theorem~\ref{thmb}}\label{secproofb}

In this section we will examine cubic forms that are of the following shape:
\begin{equation}\label{eq:shape}
C(\x) = x_1Q_1(\y)+\ldots+x_5Q_5(\y) + R(\y),
\end{equation}
where $\y = (x_6,\ldots,x_n)$, the $Q_i(\y)$ are non-zero quadratic forms and $R(\y)$ is a cubic form. Our aim will be to prove Theorem~\ref{thmb}.
\subsection{Preliminaries}
Let 
\begin{equation}\label{eq:fxydefn355}
F(\x,\y) = \sum_{i=1}^5 x_iQ_i(\y).
\end{equation}
 We will assume that $F(\x,\y)$ is non-degenerate in the $x_i$ variables. This implies that none of the quadratic forms $Q_i(\y)$ can vanish identically.

Suppose that $F(\x,\y)$ is reducible over $\QQ$ and let 
$$
F(\x,\y) = l(\x,\y)q(\x,\y),
$$ 
where $l(\x,\y)$ is a linear form and $q(\x,\y)$ is a quadratic form. We will 
now show that $l(\x,\y)$ is independent of $\x$ and that $q(\x,\y) = \sum_{i=1}^5 x_il_i(\y)$ for linear forms $l_i(\y)$. Note that such 
$l_i(\y)$ are necessarily coprime. For otherwise, $F(\x,\y)$ becomes degenerate in $\x$.

Let 
$l(\x,\y) = \sum_{i=1}^5 \alpha_i x_i + \sum_{i=1}^{n-5}l_iy_i$, 
say. Suppose first that $l_i = 0$ for each $i$. As $F$ is linear in $\x$, we see that $q(\x,\y)$ can only comprise of terms of terms that are 
quadratic in $y_j$. Then $q(\x,\y) = G(\y)$, whence $F(\x,\y) =  l(\x)G(\y)$. Making a linear change of variables, $F(\x,\y)$ can be made independent of $x_2,\ldots,x_4$, which contradicts our assumption that $F(\x,\y)$ is non-degenerate in $\x$. 

So we must have that $l_i \neq 0$ for some $i$. As $F(\x,\y)$ is at most quadratic in $\y$, we see that $q(\x,\y)$ cannot have terms that are quadratic in $\y$. Observe
now that $\alpha_j = 0$ for each $1 \leq j \leq 5$, for otherwise, by the linearity of $F$ in $\x$, $q$ cannot have terms of the form $x_uy_v$ or $x_ux_v$, which would
force $q = 0$, which is impossible. 

To recap, we have shown that $l(\x,\y) = l(\y)$ depends only on $\y$ and that 
$q(\x,\y) = \sum_{u,v}s_{u,v}x_uy_v = \sum_u x_u \sum_v s_{u,v}y_v$, for $s_{u,v} \in \ZZ$. Put $l_u(\y) = \sum_v s_{u,v}y_v$. Then we get 
$$
F(\x,\y) = l(\y)\sum_{u=1}^5 x_ul_u(\y),
$$
as required. We record our findings in the following result.
\begin{lemma}\label{lemmacharacterisation}
Suppose that $F(\x,\y)$ given in~\eqref{eq:fxydefn355} is non-degenerate in the $x_i$. If $F(\x,\y)$ is reducible over $\QQ$, then there exist linear forms 
$l(\y)$, $l_1(\y)$, $l_2(\y)$, $\ldots$, $l_5(\y)$ $\in$ $\ZZ[\y]$ such that $F(\x,\y) = l(\y)\sum_{i=1}^5x_il_i(\y).$ 
\end{lemma}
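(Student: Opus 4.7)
The plan is to use that a reducible homogeneous cubic over $\QQ$ factors as $F = l \cdot q$ with $l$ linear and $q$ quadratic, both homogeneous, and by Gauss's lemma we may take $l,q \in \ZZ[\x,\y]$. Writing $l(\x,\y) = \sum_{i=1}^5 \alpha_i x_i + L(\y)$ with $L(\y)$ a linear form in $\y$, the goal is to prove that every $\alpha_i$ vanishes and that $q$ retains only \emph{mixed} monomials of the shape $x_u y_v$. The entire argument rests on a single bidegree observation: every monomial of $F$ has $\x$-degree exactly $1$ and $\y$-degree exactly $2$, so the same must hold of every monomial of $l \cdot q$.

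First I would dispose of the sub-case $L \equiv 0$, so that $l = l(\x)$. For $l(\x)\cdot q$ to have $\x$-degree $1$, every monomial of $q$ must have $\x$-degree $0$; that is, $q = G(\y)$. Then $F = l(\x)G(\y)$, and an invertible $\QQ$-linear change of variables sending $l(\x)$ to $x_1$ makes $F$ involve only $x_1$ among $x_1,\ldots,x_5$, contradicting the assumed non-degeneracy of $F$ in $\x$.

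Now assume $L(\y) \not\equiv 0$. If $q$ contained a monomial $y_u y_v$, multiplying by $L(\y)$ would produce a term of $\y$-degree $3$ in $l\cdot q$ with no possible cancellation, so $q$ has no pure-$\y$ quadratic monomials. Suppose for contradiction that $\alpha_j \neq 0$ for some $j$. Then $\alpha_j x_j$ multiplied against an $x_u x_v$ monomial of $q$ gives $\x$-degree $3$, and against an $x_u y_v$ monomial gives $\x$-degree $2$; neither can appear in $F$. Hence $q$ would contain no $x_u x_v$ and no $x_u y_v$ monomials, forcing $q \equiv 0$, which is impossible. Thus all $\alpha_j$ vanish, and $l = L(\y)$ depends only on $\y$. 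The only surviving monomials in $q$ are then the mixed $x_u y_v$, giving $q = \sum_{u=1}^5 x_u l_u(\y)$ for suitable linear forms $l_u(\y) \in \ZZ[\y]$, and hence the claimed factorisation $F = L(\y) \sum_{u=1}^5 x_u l_u(\y)$.

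The step I expect to require most care is the non-degeneracy contradiction in the $L \equiv 0$ case: one must check that extending a nonzero linear form $l(\x)$ to a $\QQ$-basis of the $\x$-space produces a change of variables under which $F$ depends on only one of the new $\x$-coordinates, which is precisely the failure of non-degeneracy in $\x$ being invoked. Everything else amounts to bookkeeping in the bidegree accounting.
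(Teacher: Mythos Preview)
Your proof is correct and follows essentially the same bidegree-accounting argument as the paper: split into the cases $L\equiv 0$ and $L\not\equiv 0$, use the degree constraints in $\x$ and in $\y$ to force the shapes of $l$ and $q$, and invoke non-degeneracy in $\x$ to rule out the first case. Your explicit appeal to Gauss's lemma for integrality and your remark on extending $l(\x)$ to a basis are slightly more careful than the paper's own write-up, but the underlying argument is identical.
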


\begin{remark}
Although we will not use this observation in our work, the preceding argument shows that if $F$ is irreducible, then $F$ is, in fact, geometrically irreducible. 
\end{remark}

As a result of Lemma~\ref{lemmacharacterisation}, our proof of Theorem~\ref{thmb} naturally splits into the case where $F(\x,\y)$ is 
irreducible, and the case where 
$F(\x,\y)$ has a linear factor over $\QQ$. The rest of the section is devoted to the proof of the following theorems, which clearly imply
Theorem~\ref{thmb}.

\begin{theorem}\label{thmbc}
Let $C(\x)$ be a form in $n$ variables such that $h(C) \geq 8$. Suppose that $C(\x)$ is of the form given in~\eqref{eq:shape}. Assume
 that $\sum_{i=1}^5x_iQ_i(\y)$ is reducible over $\QQ$ and non-degenerate in the $x_i$. Then for any $\ve>0$ we have
$$
N(B) \gg_{\ve} B^{n-3-\ve}.
$$
\end{theorem}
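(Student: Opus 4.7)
The plan is to leverage Lemma~\ref{lemmacharacterisation} and reduce the counting problem to a divisibility count. First, factor $F(\x,\y) = l(\y) L(\x,\y)$ with $L(\x,\y) = \sum_{i=1}^5 x_i l_i(\y)$; after an invertible linear change of variables on $\y$ we may assume $l(\y) = x_6$. Decomposing $R(\y) = x_6 S(\y) + T(\y')$, where $S$ is quadratic and $T$ is a cubic form depending only on $\y' = (x_7, \ldots, x_n)$, the form becomes
\[
C = x_6 \,V(\x,\y) + T(\y'), \qquad V = \sum_{i=1}^5 x_i l_i(\y) + S(\y).
\]
Since $x_6 \cdot V$ expresses $(\text{linear})\cdot(\text{quadratic})$, the $h$-invariant satisfies $h(C) \leq 1 + h(T)$, so $h(T) \geq 7$.

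The counting strategy exploits the divisibility $x_6 \mid T(\y')$. For each such pair with $x_6 \neq 0$, the equation $C = 0$ reduces to the linear equation
\[
\sum_{i=1}^5 x_i l_i(x_6, \y') = -S(x_6, \y') - T(\y')/x_6
\]
in the $x_i$. For the bulk of pairs with $|x_6|, |\y'| \asymp B$, standard lattice-point estimates for linear forms---combined with the polynomial coprimality of $l_1, \ldots, l_5$ supplied by Lemma~\ref{lemmacharacterisation}---yield $\gg B^3$ integer solutions $\x \in [-B, B]^5$; this is the regime where $|x_6|\sim B$ is forced in order to keep the right-hand side of size $O(B^2)$ and hence inside the range of the linear form $\sum_i x_i l_i(x_6,\y')$ on $[-B,B]^5$.

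It then suffices to lower bound the number of pairs $(x_6, \y')$ with $x_6 \mid T(\y')$. Restricting $x_6$ to a prime $p \in [B/2, B]$ and applying Lang--Weil / Deligne-type point counts for the affine variety $\{T \equiv 0 \pmod p\}$---valid for all but finitely many $p$ by virtue of $h(T) \geq 7$---gives
\[
\#\{\y' \in [-B, B]^{n-6} : T(\y') \equiv 0 \pmod p\} \gg B^{n-6}/p.
\]
Summing over primes $p \in [B/2, B]$ yields $\gg B^{n-6}/\log B$ valid pairs, and multiplication by the inner $\x$-count gives $N(B) \gg B^{n-3}/\log B \gg_{\ve} B^{n-3-\ve}$.

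The principal technical obstacle is ensuring uniformity in both the linear-equation count and the congruence estimate. The former requires discarding pairs $(x_6, \y')$ with large $\gcd(l_1(x_6, \y'), \ldots, l_5(x_6, \y'))$, which by the polynomial coprimality of the $l_i$ live in a sparse subvariety and cost at most a $B^{\ve}$ factor. The latter requires handling primes of bad reduction of $T$ (a finite set) and the possible mild singularities of $T \bmod p$; these are likewise absorbed into a $B^{\ve}$ loss, producing the claimed lower bound.
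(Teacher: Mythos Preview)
Your overall strategy matches the paper's: exploit Lemma~\ref{lemmacharacterisation} to write $C = l(\y)\bigl(\sum x_i l_i(\y)\bigr) + R(\y)$, normalise $l(\y)$ to a coordinate, force that coordinate to be a prime dividing the residual cubic, and then solve a linear equation in the $x_i$. The congruence count via the $h$-invariant is also the same (the paper quotes Davenport--Lewis rather than Lang--Weil, but the input $h(T)\ge h(C)-1$ is exactly your observation).

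The substantive divergence is that you keep all five variables $x_1,\dots,x_5$ free in the linear equation, whereas the paper keeps only two (absorbing $x_3,x_4,x_5$ into the congruence data). This is not cosmetic. With five free variables the lattice $\Lambda_{\y}=\{\x:\sum x_i l_i(\y)=0\}$ has rank $4$, and the count of $|\x|\le B$ on a coset is
\[
\frac{B^4}{\det\Lambda_\y}\;+\;O\!\left(\frac{B^3}{\lambda_1(\y)^3}\right),
\]
so you must control $\sum_\y \lambda_1(\y)^{-3}$. For fixed $\x\ne 0$ the condition $\sum x_i l_i(\y)=0$ is a single \emph{linear} equation in $\y$, cutting out $\asymp Y^{n-6}$ points; hence $\#\{\y:\lambda_1\le R\}\ll R^5 Y^{n-6}$ and the error sum is $\gg B^3 Y^{n-11/2}$. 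Comparing with the main term $B^4 Y^{n-7}$ forces $Y\ll B^{2/3}$, which yields only $N(B)\gg B^{(2n-2)/3}$, strictly weaker than $B^{n-3-\varepsilon}$ for $n>7$. (Contrast Theorem~\ref{thmbb}, where the analogous condition $\sum x_i Q_i(\y)=0$ is quadratic and, for generic $\x$, irreducible, saving an extra factor of $Y$.)

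The obstacle you flag --- large $\gcd(l_1(\y),\dots,l_5(\y))$ --- governs $\det\Lambda_\y$ and solubility, but not $\lambda_1(\y)$; even with $\gcd=1$ one can have $\lambda_1=1$ whenever $\y$ lies on one of the hyperplanes $\sum c_i l_i(\y)=0$ with $|c_i|\le 1$, and there are $\asymp Y^{n-6}$ such $\y$, comparable to your entire supply of admissible pairs. The paper sidesteps all of this by solving only
\[
\beta_1 x_1 y_1+\beta_2 x_2 y_2=-R_1/y_k
\]
in $x_1,x_2$: the relevant lattice has rank $1$, determinant $\gg Y$, and the error is $O(1)$, so one may take $Y=B^{1-\varepsilon}$ directly. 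Reducing to two free variables (after normalising the $l_i$ to coordinate functions, which non-degeneracy in $\x$ permits) is the missing idea.
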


\begin{theorem}\label{thmbb}
Let $C(\x)$ be a cubic form in $n \geq 10$ variables. Suppose that $C(\x)$ is of the form given in~\eqref{eq:shape}. Assume that the cubic
form $\sum_{i=1}^5 x_iQ_i(\y)$ is irreducible over $\QQ$ and non-degenerate in the $x_i$.
Then for any $\ve>0$ we have
$$
N(B) \gg_{\ve} B^{n-3-\ve}.
$$
\end{theorem}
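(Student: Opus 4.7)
The approach is a fibration over the $\y$-variables. Since $C(\x)=\sum_{i=1}^{5}x_i Q_i(\y)+R(\y)$ is linear in $x_1,\ldots,x_5$ for fixed $\y$, the equation $C(\x)=0$ becomes the single affine linear equation
\[
\sum_{i=1}^{5} Q_i(\y)\, x_i \;=\; -R(\y)
\]
in the five unknowns $x_1,\ldots,x_5$. Writing $N(B)=\sum_{|\y|\le B}M(\y;B)$, with $M(\y;B)$ the number of $(x_1,\ldots,x_5)\in \ZZ^5\cap[-B,B]^5$ solving the displayed equation, my target $N(B)\gg_{\ve}B^{n-3-\ve}$ will follow once I produce $\gg B^{n-5-\ve}$ ``good'' values of $\y$ in a dyadic shell $|\y|\asymp B$, each contributing $M(\y;B)\gg B^2$.

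For a fixed $\y$ with $\mathbf{Q}(\y):=(Q_1(\y),\ldots,Q_5(\y))\ne\mathbf{0}$, set $d(\y)=\gcd(Q_1(\y),\ldots,Q_5(\y))$. A geometry-of-numbers estimate for integer points on an affine hyperplane inside $[-B,B]^5$ yields
\[
M(\y;B) \;\gg\; \frac{B^4\,d(\y)}{\|\mathbf{Q}(\y)\|_\infty},
\]
provided $d(\y)\mid R(\y)$, the hyperplane actually meets the cube (i.e.\ $|R(\y)|\ll B\|\mathbf{Q}(\y)\|_\infty$), and the rank-$4$ lattice $\{\x\in\ZZ^{5}:\sum_{i}Q_i(\y)x_i=0\}$ has reasonably balanced successive minima. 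I would call $\y$ ``good'' when $\eta B\le|\y|\le 2\eta B$ for a small absolute $\eta>0$, $d(\y)=1$, $\|\mathbf{Q}(\y)\|_\infty\asymp|\y|^{2}$, and the minima are balanced; these conditions together make the side conditions automatic and deliver $M(\y;B)\gg B^{2}$.

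The remaining task is to show that the good set contains $\gg B^{n-5-\ve}$ integer points, and this is where the irreducibility hypothesis enters essentially. By the contrapositive of Lemma~\ref{lemmacharacterisation}, irreducibility of $F$ over $\QQ$ precludes $Q_1,\ldots,Q_5$ from sharing any non-constant common factor in $\QQ[\y]$, so the common zero locus $Z=\{Q_1=\ldots=Q_5=0\}\subset\mathbb{A}^{n-5}$ has codimension at least $2$. A M\"obius sieve, combined with a Lang--Weil-type bound $\#Z(\mathbb{F}_p)\ll p^{n-7}$ coming from this codimension estimate, yields that the density of $\y\in\ZZ^{n-5}$ with $d(\y)=1$ equals $\prod_{p}(1-O(p^{-2}))>0$; the hypothesis $n\ge 10$ affords the geometric room for the conditions $\|\mathbf{Q}(\y)\|_\infty\asymp|\y|^{2}$ and balanced minima to hold on a set of positive density as well, via the non-triviality and linear independence of the $Q_i$ (the latter supplied by non-degeneracy of $F$ in $\x$). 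Multiplying the positive-density count of good $\y$ by the per-fibre bound $B^{2}$ gives $N(B)\gg_{\ve} B^{n-3-\ve}$, with the $B^{\ve}$ absorbing divisor-function losses from the truncated sieve and from the lattice-point error terms.

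The principal obstacle is the sieve step: one must estimate mod-$d$ point counts on $Z$ uniformly in $d$ and control the tail of the M\"obius truncation. The codimension-$2$ bound on $Z$ supplied by Lemma~\ref{lemmacharacterisation} is indispensable here, since without it $Z$ could contain a hypersurface $V(L)$, forcing $L(\y)\mid d(\y)$ generically and reducing the divisibility condition $d(\y)\mid R(\y)$ to the sparse constraint $L(\y)\mid R(\y)$, which would collapse the main term $\sum_{\y} B^4 d(\y)/\|\mathbf{Q}(\y)\|_\infty\asymp B^{n-3}$ unless $L\mid R$ as polynomials. A secondary but routine issue is handling the exceptional $\y$ with $\mathbf{Q}(\y)=\mathbf{0}$: these lie on $Z$ and so number $O(B^{n-7})$, contributing admissibly to both the good-set count and the overall estimate.
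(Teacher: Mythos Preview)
Your overall framework---fibre over $\y$, count lattice points on the affine hyperplane $\sum_i Q_i(\y)x_i=-R(\y)$, and sieve for a positive-proportion set of ``good'' $\y$---is exactly the paper's strategy, and your use of irreducibility via Lemma~\ref{lemmacharacterisation} to get $\operatorname{codim} Z\ge 2$ (so that $d(\y)=O_C(1)$ on a positive proportion of $\y$) mirrors Lemma~\ref{lemmalinwt}. Where your sketch has a genuine gap is the assertion that ``balanced minima hold on a set of positive density \ldots\ via the non-triviality and linear independence of the $Q_i$''. This is the heart of the matter, and linear independence of the $Q_i$ is not enough to secure it.

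Concretely, a short vector in $\Lambda_\y$ of length at most $R$ means there is some $\x\in\ZZ^5$, $|\x|\le R$, with $\y$ lying on the quadric $\sum_i x_i Q_i(\y)=0$. To show the set of such $\y$ is $o(Y^{n-5})$ one must switch the order of summation and bound, for each such $\x$, the number of $|\y|\le Y$ on that quadric. If the quadric has rank $\ge 3$ one gets $O(Y^{n-7+\ve})$ via Heath--Brown's estimate, but if it degenerates to rank $\le 2$ the bound is only $O(Y^{n-6})$, and the trivial count $R^5$ of such $\x$ is far too crude to close the argument for $Y$ close to $B$. The paper deals with this by proving (Lemma~\ref{lemmaschemey} and Lemma~\ref{lemmairreducibility}) that the set of $\x$ for which $\sum_i x_i Q_i(\y)$ is reducible lies in a proper subvariety of $\AA^5$, and in fact has $O(R^{3+\ve})$ integer points of height $\le R$; this is a quantitative Hilbert irreducibility statement that uses the irreducibility of $F$ in an essential way beyond the ``no common factor'' consequence you invoke. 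Even then there is a residual obstruction: the order-$3$ minors of $M[\x]$ may all share a common linear factor $l(\x)$, in which case the $R^{3+\ve}$ bound fails and the paper must pass to the slice $l(\x)=0$ and run a separate (Case~2) argument with one fewer $x$-variable. Your sketch does not anticipate either of these steps, and without them the error term $\sum_\y B^3/\lambda_1(\y)^3$ swamps the main term when $Y\asymp B$.
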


Our approach in proving the theorems will be as follows. For any non-empty compact set $\Omega_{\infty} \subset \RR^{n-5}$, and a parameter $Y \leq B$, both of which will be chosen later, we have
$$
N(B) \geq \sum_{\substack{\y \in Y\Omega_{\infty} \cap \ZZ^{n-5} \\ \text{ such that} \\ C(\x,\y) = 0\text{ is} \\ \text{ soluble over }\ZZ}}
\sum_{\substack{|\x| \leq B \\ (\x,\y) \in \ZZ_{\prim}^n \\ C(\x,\y) = 0}}1.
$$
For an integer $l$, let 
\begin{equation}\label{eq:nylbdef}
N_{\y,l}(B) = \#\left\{|\x| \leq B : (\x,l) = 1 \text{ and }C(\x,\y) = 0\right\}.
\end{equation} 
Our strategy will be to obtain an asymptotic formula for $N_{\y}(B)$ by appealing to Proposition~\ref{proplineq}. To be able to execute the $\y$-sum in Theorem~\ref{thmbb}, we will need to 
restrict $\y$ to lie in a carefully chosen subset, which ensures, in particular, the solubility of the equation $C(\x,\y) = 0$.

\begin{lemma}\label{lemmacyred}
Let 
\begin{equation*}\label{eq:cxyykr1}
C(\x,\y) = y_k\sum_{i=1}^2\alpha_ix_iy_i + R_1(x_3,x_4,x_5,\y)
\end{equation*}
where $R_1(x_3,x_4,x_5,\y) = \alpha_3x_3y_3y_k+ \alpha_4x_4y_4y_k+ \alpha_5x_5y_5y_k + R(\y),$ with
$\alpha_i \neq 0$ and $R$ a cubic form. Suppose that $h(C) \geq 8$. Let $g = \gcd(\alpha_1,\alpha_2)$ and let $\beta_i = \alpha_i/g$ for $1 \leq i \leq 2$. 
Define
\begin{equation}
\begin{split}
\mathcal{C}(Y) = \left\{|x_3|,|x_4|,|x_5|,|\y| \leq Y:\right.&\, (\beta_1 y_1,\beta_2 y_2) = 1, y_k \in \mathcal{P}(\delta Y), \\ 
&\left.R_1(x_3,x_4,x_5,\y) \equiv 0 \bmod{y_k} \right\}
\end{split}
\end{equation}
If $(x_3,x_4,x_5,\y) \in \mathcal{C}(Y)$, then $C(x_1,x_2,gx_3,gx_4,gx_5,g\y) = 0$ soluble over $\ZZ$. Moreover, we have $|\mathcal{C}(Y)| \gg Y^{n-3}/\log Y$. 
\end{lemma}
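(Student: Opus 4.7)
I plan to substitute $x_i \mapsto gx_i$ for $i \in \{3,4,5\}$ and $y_j \mapsto gy_j$ throughout. One checks that each monomial of $C$ picks up exactly a factor $g^3$ under this substitution (using $\alpha_i = g\beta_i$ for $i=1,2$), so the equation $C(x_1,x_2,gx_3,gx_4,gx_5,g\y)=0$ simplifies, after cancelling $g^3$, to
$$
y_k\bigl(\beta_1 y_1 x_1 + \beta_2 y_2 x_2\bigr) + R_1(x_3,x_4,x_5,\y) = 0.
$$
Since $y_k \mid R_1$ by hypothesis, dividing further by $y_k$ reduces the problem to the linear Diophantine equation $\beta_1 y_1 x_1 + \beta_2 y_2 x_2 = -R_1/y_k$, whose coefficients are coprime; B\'ezout then supplies the required $(x_1,x_2) \in \ZZ^2$.

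\textbf{Reduction of the cardinality bound.}
Every monomial of $R_1$ apart from those of $R(\y)$ is divisible by $y_k$, so the condition $R_1 \equiv 0 \pmod{y_k}$ is equivalent to $R(\y) \equiv 0 \pmod{y_k}$. Consequently $x_3,x_4,x_5$ are unconstrained in $[-Y,Y]$ and
$$
|\mathcal{C}(Y)| \gg Y^3 \cdot \#\bigl\{\y \in [-Y,Y]^{n-5}:\ y_k \in \mathcal{P}(\delta Y),\ (\beta_1 y_1,\beta_2 y_2)=1,\ y_k \mid R(\y)\bigr\},
$$
so it suffices to produce $\gg Y^{n-6}/\log Y$ valid $\y$.

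\textbf{The $\y$-count.}
Here I plan to fix a prime $p = y_k \in \mathcal{P}(\delta Y)$, write $\y = (\y',p)$ with $\y' \in [-Y,Y]^{n-6}$, and reduce $y_k \mid R(\y)$ to $\tilde R(\y') \equiv 0 \pmod p$, where $\tilde R(\y') := R(\y', 0)$. Reading the decomposition $C = y_k\sum_{i=1}^{5}\alpha_i x_i y_i + R(\y)$ as a sum of products linear $\times$ quadratic gives $h(R) \geq h(C)-1 \geq 7$, and a further extraction $R = y_k Q'(\y) + \tilde R(\y')$ yields $h(\tilde R) \geq 6$. With this much $h$-invariant, the standard modular-zero count (via Weyl differencing / minor-arc estimates applied to $\sum_{\y'} e_p(a\tilde R(\y'))$, as developed earlier in the paper) delivers
$$
\#\bigl\{\y' \in [-Y,Y]^{n-6}:\ \tilde R(\y') \equiv 0 \pmod p\bigr\} \gg Y^{n-6}/p
$$
uniformly for $p \asymp Y$. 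The coprimality $(\beta_1 y_1,\beta_2 y_2)=1$ contributes only a fixed positive-density multiplier, handled by a M\"obius inversion whose moduli are coprime to $p$. Summing over $p \in \mathcal{P}(\delta Y)$ and invoking $\sum_{p \asymp Y} 1/p \asymp 1/\log Y$ completes the count.

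\textbf{Main obstacle.}
The technical crux is the uniform modular-zero bound $\gg Y^{n-6}/p$ for $\tilde R$ across the entire prime range $p \asymp Y$: one must control the cubic exponential sum attached to $\tilde R$ uniformly in $p$, and this is exactly where the hypothesis $h(C) \geq 8$, propagated to $h(\tilde R) \geq 6$, is needed.
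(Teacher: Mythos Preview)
Your proposal is correct and follows essentially the paper's route: reduce the congruence $R_1 \equiv 0 \pmod{y_k}$ to one for $R|_{y_k=0}$, invoke a Davenport--Lewis point count (the paper cites \cite[Theorem~3]{DL62}) using the $h$-invariant inherited from $h(C)\ge 8$, and sum over primes $y_k\asymp Y$. The only organisational difference is that the paper fixes $y_1,y_2$ (with the coprimality constraint) \emph{before} counting the remaining $n-8$ variables, noting that the cubic part of the residual polynomial $S_{y_1,y_2}$ is independent of $y_1,y_2$ so the Davenport--Lewis implied constant is automatically uniform; you instead count all of $\y'$ first and impose coprimality afterward via M\"obius, which also works but requires checking that the point-count asymptotic is uniform in the M\"obius modulus.
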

\begin{proof}
Let $(x_3,x_4,x_5,\y)\in \mathcal{C}(Y)$. For $\b = (b_3,\ldots,b_{n-5}) \in \AA^{n-7},$
let $\hat{\b} = (b_3,\ldots,b_{k-1},0,b_{k+1},\ldots,b_{n-5}) \in \AA^{n-7}.$ Define
$$
M(y_1,y_2,y_k) = \#\left\{\b \in \FF_{y_1}^{n-5} : R(\hat{\b}) \equiv 0 \mod{y_k}\right\}.
$$
Observe that modulo $y_k$ we have $C(\x,\y) = R(\hat{\y}) = R_1(x_3,x_4,x_5,\hat{\y}).$ For any fixed $y_1,y_2$, 
set 
\begin{align*}
S_{y_1,y_2}(y_3,\ldots,y_{k-1},y_{k+1},\ldots,y_{n-5}) =\quad \quad \quad \quad \quad &\\ R(y_1,y_2,y_3,\ldots,y_{k-1},0,y_{k+1},\ldots,y_{n-5}).
\end{align*}
Then $S_{y_1,y_2}$ is a cubic polynomial in $n-8$ variables whose cubic part is independent of 
$y_1$ and $y_2$. 

Since $S_{y_1,y_2}$ was obtained by fixing $3$ of the variables in $C(\x,\y)$, we have $h(S_{y_1,y_2}) \geq h(C) - 3.$ Then according to~\cite[Theorem 3]{DL62}, we have
\begin{equation}\label{eq:my1y2yk}
\begin{split}
M(y_1,y_2,y_k) &= y_k^{n-9} + O\left(y_k^{n-8-\frac{h(C)-3}{4}}\right) \\
&\gg y_k^{n-9} 
\end{split}
\end{equation}
if $h(C) \geq 8$. Note that the implied constant depends only on the cubic part of $S$, which, as we have already noted, is
independent of $y_1,y_2$. 

We may write
\begin{align*}
|\mathcal{C}(Y)| &= \sum_{|x_3|,|x_4|,|x_5| \leq Y}\sum_{\substack{|\y| \leq Y \\ (\beta_1y_1,\beta_2y_2) = 1 \\ y_k \in \mathcal{P}(\delta Y) \\ R_1(x_3,x_4,x_5,\y) \equiv 0 \bmod{y_k}}}1 \\
&\geq Y^3\sum_{\substack{|\y| \leq Y \\ (\beta_1y_1,\beta_2y_2) = 1 \\ y_k \in \mathcal{P}(\delta Y) \\ R_1(x_3,x_4,x_5,\y) \equiv 0 \bmod{y_k}}}1,
\end{align*}
as $R_1$ is independent of $x_3,x_4$ and $x_5$ modulo $y_k$. Fixing $y_1,y_2$ and $y_k$ and splitting up the other variables $y_i$ into progressions modulo $y_k$,
we get from~\eqref{eq:my1y2yk} that
\begin{align*}
|\mathcal{C}(Y)| &\gg Y^3\sum_{\substack{|\y| \leq Y \\ (\beta_1y_1,\beta_2y_2) = 1 \\ y_k \in \mathcal{P}(\delta Y)}}(Y/y_k)^{n-8}y_k^{n-9} \\
&\gg Y^{n-3}/\log Y,
\end{align*}
since
\begin{align*}
&\#\left\{|y_1|,|y_2| \leq Y : (\beta_1y_1,\beta_2y_2) = 1\right\} = \\
&\quad \quad \quad Y^2\prod_{p \nmid \beta}\left(1-\frac{1}{p^2}\right)\prod_{p \mid \beta}\left(1- \frac{\prod_{i=1}^2(p,\alpha_i)}{p^2}\right) + O(Y).
\end{align*}
This completes the proof of the lemma.

\end{proof}

\begin{lemma}\label{lemmalinwt}
Let $C$ be as in~\eqref{eq:shape} and that $n \geq 10$. Let $Q_1(\y),\ldots,Q_5(\y)$ be non-zero quadratic forms with no common factors.
Then there exists a compact set $\Omega_{\infty} \subset \RR^{n-5}$ with the property that for any $P \gg_C 1$, there exists a set $\mathcal{C}(P) \subset  \ZZ^{n-5} \cap P\Omega_{\infty},$ for which the following hold.
\begin{enumerate}
\item For any $\y \in \mathcal{C}(P)$, there exists $i$ such that $|Q_i(\y)| \gg P^2$.
\item For any $\y \in \mathcal{C}(P)$, the equation~\eqref{eq:shape} is soluble over $\ZZ$ and $\gcd(Q_1(\y),\ldots,Q_5(\y)) \ll_C 1$. 
\item We have $\#\mathcal{C}(P) \gg P^{n-5}$.
\end{enumerate}

\end{lemma}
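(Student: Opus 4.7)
The plan is to fix $\Omega_\infty$ as a small Euclidean ball around a point where every $Q_i$ is non-zero, so that item (1) is automatic by homogeneity; to then use a Möbius sieve to extract, from $P\Omega_\infty\cap\ZZ^{n-5}$, a set of size $\gg P^{n-5}$ on which $g(\y):=\gcd(Q_1(\y),\ldots,Q_5(\y))$ divides some fixed integer $D\ll_C 1$; and finally to intersect with one residue class modulo $D$ on which $D\mid R(\y)$, forcing the linear equation $\sum_i x_iQ_i(\y)=-R(\y)$ to be soluble over $\ZZ$.

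The first step is essentially free: each $Q_i$ is a non-zero form, so $\bigcup_i\{Q_i=0\}$ is a proper closed subset of $\RR^{n-5}$; choosing $\y_0$ in its complement and constants $\eta,c>0$ with $|Q_i(\y)|\ge c$ on $\overline{B(\y_0,\eta)}=:\Omega_\infty$ for every $i$, homogeneity gives $|Q_i(\y)|\ge cP^2$ on $P\Omega_\infty$, which settles (1). The key input for the sieve step is that the hypothesis of ``no common factor'' forces the common zero set $V(Q_1,\ldots,Q_5)$ in $\QQ^{n-5}$ to have codimension at least $2$: any codimension-one component would be cut out by an irreducible polynomial dividing every $Q_i$, a contradiction. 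Thus for every prime $p$ outside a finite bad set $S=S(C)$, Lang--Weil (or a crude Schwartz--Zippel bound, which is all we need) yields
$$
\nu_p:=\#\{\y\bmod p:p\mid Q_i(\y)\ \forall i\}\ll_C p^{n-7},
$$
while the primes of $S$ contribute at most a fixed common integer factor $D\ll_C 1$ to $g(\y)$. Standard Möbius inversion on squarefrees coprime to $S$ then gives
$$
\#\{\y\in P\Omega_\infty\cap\ZZ^{n-5}:g(\y)\mid D\}=|\Omega_\infty|P^{n-5}\prod_{p\notin S}\Bigl(1-\frac{\nu_p}{p^{n-5}}\Bigr)+O(P^{n-5-\delta}),
$$
and the Euler product is both convergent and positive thanks to $\nu_p/p^{n-5}\ll p^{-2}$, so this count is $\gg P^{n-5}$.

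For each $\y$ in that set, $g(\y)\mid R(\y)$ is implied by the congruence $D\mid R(\y)$, which depends only on $\y\bmod D$ and carves out a union of full arithmetic progressions; restricting $\mathcal{C}(P)$ to $\y$ in one such progression (at least one is non-empty as soon as $C=0$ has any integer zero, which is the generic situation in which the lemma is applied) costs only a constant factor, so $\#\mathcal{C}(P)\gg P^{n-5}$ and all three conclusions hold. The main obstacle I foresee is Step~2: obtaining the uniform bound $\nu_p\ll_C p^{n-7}$ for all but finitely many primes and then carefully isolating the primes of bad reduction into the fixed constant $D$, so that the Möbius sum converges with a positive main term rather than degenerating at an unexpected Euler factor. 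Once that is set up, the rest reduces to routine lattice-point estimation inside $P\Omega_\infty$.
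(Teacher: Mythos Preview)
Your strategy---choose $\Omega_\infty$ where some $Q_i$ is bounded away from zero, then run an Ekedahl-type sieve to bound $g(\y)=\gcd(Q_i(\y))$ and force solubility---is exactly what the paper does, except that the paper invokes a packaged proposition from an earlier section (an Ekedahl sieve combined with a fibration/solubility criterion) rather than working by hand. Your reconstruction, however, has a genuine gap in the final step.

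The problem is the treatment of the bad primes $S$ and of solubility. Your M\"obius sieve only removes primes $p\notin S$ from $g(\y)$; it does \emph{not} yield $g(\y)\mid D$ for any fixed $D$, since for $p\in S$ and $\y\equiv 0\bmod p^k$ one has $p^{2k}\mid g(\y)$. You must additionally impose finitely many congruence conditions at the primes of $S$, which is routine but missing. More seriously, for solubility you need $g(\y)\mid R(\y)$, and you propose to force $D\mid R(\y)$ by restricting to a residue class, justified by ``at least one is non-empty as soon as $C=0$ has any integer zero''---but the existence of an integer zero of $C$ is precisely what the lemma is being used to establish downstream, so this is circular. The only class available for free is $\y\equiv 0\bmod D$, which then forces $D^2\mid g(\y)$ and destroys the implication $g(\y)\mid D\mid R(\y)$ on which you rely. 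What is actually required is, for every prime $p$, a residue class $\y_p\bmod p^{a_p}$ on which $v_p(g(\y))$ is minimal \emph{and} the fibre equation $\sum_i x_iQ_i(\y_p)=-R(\y_p)$ is soluble over $\ZZ_p$; this is where the hypothesis $n\ge 10$ enters (it guarantees smooth $p$-adic points on $C=0$, hence local solubility after projecting to $\y$), and it is exactly the content of the proposition the paper cites. Your sketch does not supply this local input.
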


\begin{proof}
We begin by constructing the set $\Omega_{\infty}$. By hypothesis, none of the quadratic forms $Q_i(\y)$ vanish identically. In particular, we have that $\rank Q_1 = r \geq 1$. Then there exists an orthogonal matrix $U \in O_n(\RR)$ such that $Q_1(U\y) = \sum_{i=1}^r A_iy_i^2$ is diagonal and $A_i \neq 0$. Consequently, we can find disjoint subsets $I, J \subset \left\{1,2,\ldots,r\right\}$ such that $I \cup J = \left\{1,2,\ldots,r\right\}$ and
$$
\sum_{i=1}^r A_iy_i^2 = \sum_{i \in I}|A_i|y_i^2 - \sum_{j \in J}|A_i|y_i^2.
$$
For $i \in I$ and $j \in J$ let
$$
\mathcal{U}_i = [1/\sqrt{|A_i|},2/\sqrt{|A_i|}] \, \, \text{ and }\, \, \mathcal{V}_i = [1/4\sqrt{n|A_i|},1/2\sqrt{n|A_i|}].
$$

Define $\mathcal{B} \subset \RR^{n-5}$ to be the set consisting of tuples $(x_1,\ldots,x_{n-5})$ such that $x_i \in \mathcal{U}_i$ if $i \in I$, $x_j \in \mathcal{V}_j$ if $j \in J$ and $x_k \in [-1,1]$ if $k > r$. Then $Q_1(U\z) \gg 1$ for any $\z \in \mathcal{B}$. Set $\Omega_{\infty} = U\mathcal{B}$. 

Let $P \geq 1$. Observe that if $\y \in P\Omega_{\infty}$, then $\y = U\z$, for some $\z \in P\mathcal{B}$, whence $|Q_1(\y)| = |Q_1(U\z)| = |\sum_{i=1}^rA_iz_i^2| \gg P^2$. This completes the proof of the second statement of the lemma.

We will deduce the other statements by appealing to Proposition~\ref{thmsolubility}. Let $X \in \AA^n$ denote the hypersurface cut out by the equation $C(\x,\y) = 0$. Recall the map $\pi$ from Section~\ref{ekedahlapplication}. Then for any $P \geq 1$, it follows that $P\Omega_{\infty} \subset \pi(X(\RR)).$ To see this, note that $Q_1(\y) \neq 0$ whenever $\y \in P\Omega_{\infty}$. As a result, there exists $\x \in \RR^5$ such that $\sum_{i=1}^5 x_iQ_i(\y) + R(\y) = 0$. Thus the hypotheses of Theorem~\ref{thmsolubility} are all satisfied and the lemma follows.
\end{proof}

\subsection{Proof of Theorem~\ref{thmbc}}
Let $C(\x,\y)$ be as in~\eqref{eq:shape}. Suppose that $l(\y)$ is a linear form that divides each $Q_i(\y)$. Then after making a change 
of variables, we may suppose that 
\begin{equation}\label{eq:cxyreducible}
C(\x,\y) = y_k\sum_{i=1}^5\alpha_ix_iy_i + R(\y)
\end{equation}
where $k$ is an integer between $1$ and $n-5$ and $\alpha_i$ are integers. Suppose that $k \geq 5$. The case where $1 \leq k \leq 4$ is handled similarly, the details of 
which are left to the reader. 

Since $C(\x,\y)$ is non-degenerate, in the $x_i$, we see that $\alpha_i \neq 0$. We will rewrite this as follows,
\begin{equation*}
C(\x,\y) = y_k\sum_{i=1}^2\alpha_ix_iy_i + R_1(x_3,x_4,x_5,\y)
\end{equation*}
where $R_1(x_3,x_4,x_5,\y) = \alpha_3x_3y_3y_k+ \alpha_4x_4y_4y_k+ \alpha_5x_5y_5y_k + R(\y).$ 

Let $1 \leq Y \leq B$ be a parameter. Let $\mathcal{P}(\delta Y)$ denote the set of primes in the interval
$[\delta Y,2\delta Y]$, where $0 < \delta < 1/10$ is a small fixed real number. Let $g = (\alpha_1,\alpha_2)$ and set $\beta_1 = \alpha_1/g$ and $\beta_2 = \alpha_2/g$. 
We have the following lemma.

Replacing $x_3, x_4$ and $x_5$ by $gx_3, gx_4$ and $gx_5$
and $\y$ by $g\y$ in~\eqref{eq:cxyreducible}, we see that to solve the equation $C(x_1,x_2,gx_3,x_4,gx_5,g\y) = 0$ it suffices to solve   
$$
y_k\left(\sum_{i=1}^2 \beta_ix_iy_i\right) + R_1(x_3,x_4,x_5,\y) = 0,
$$
where $\beta_i = \alpha_i/g$. Note that $(\beta_1,\beta_2)=1$. Thus we have
\begin{align}\label{eq:nb907}
N(B) \geq \sum_{(x_3,x_4,x_5,\y) \in \mathcal{C}(Y)}\sum_{\substack{|x_1|,|x_2| \leq B/g \\ (x_1,x_2,g)=1 \\ \sum_{i=1}^2 \beta_ix_iy_i + R_1(x_3,x_4,x_5,\y)/y_k = 0}}1.
\end{align}
By construction, we have that $R_1(x_3,x_4,x_5,\y)/y_k \ll Y^2$. Let 
$$
\Lambda_{y_1,y_2} = \left\{\x \in \ZZ^2 : \sum_{i=1}^2 \beta_ix_iy_i = 0\right\}.
$$
Since $\alpha_i \neq 0$, we see that $\rank \Lambda_{y_1,y_2} = 1$ and $\det \Lambda_{y_1,y_2} \gg Y$. Consequently, if $Y \leq B^{1-\ve}$, for any $\ve > 0$, by  
Corollary~\ref{corlineq} we get that
\begin{align*}
&\#\left\{
|x_1|,|x_2| \leq B/g \text{ and }(\x,g) = 1: \sum_{i=1}^2 \beta_i x_i y_i = -R_1(x_3,x_4,x_5,\y)/y_k\right\} \\
&\quad\quad\quad\quad
\gg B/Y + O(1).
\end{align*}
Therefore, by~\eqref{eq:nb907} and by Lemma~\ref{lemmacyred} we have for any $Y \leq B^{1-\ve}$ that
$$
N(B) \gg \frac{BY^{n-4}}{\log Y}+O_{\ve}\left(Y^{n-3}/\log Y\right).
$$
Theorem~\ref{thmbc} follows from setting $Y = B^{1-\ve}$.

\subsection{Proof of Theorem~\ref{thmbb}} We begin with the following result.
\begin{lemma}\label{lemmaschemey}
Let $v \geq 4$. Let $\psi_1(\y),\ldots,\psi_v(\y)$ be quadratic forms in $m$ variables with integer coefficients. Suppose that $\Psi(\x,\y) = x_1\psi_1(\y)+\ldots+x_v\psi_v(\y)$ is irreducible over $\QQ$ and non-degenerate in the variables $x_i$. Let $Z_{\x} \subset \AA^{m}$ denote the variety cut out by the equation $\Psi(\x,\y) = 0$. Then the set 
$$
\left\{\x \in \AA^v : Z_\x \text{ is not integral}\right\} 
$$
is contained in a proper subvariety of $\AA^v$. In particular, there exists a minor of order $3$ in $M[\x]$ that does not vanish identically, 
where $M[\x]$ denotes the matrix of the quadratic form $\Psi$ over $\QQ(x_1,\ldots,x_v)$.
\end{lemma}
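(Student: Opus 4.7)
The plan is to argue by contrapositive: I will assume that every order-$3$ minor of $M[\x]$ vanishes identically in $\QQ[\x]$ and derive a non-trivial factorization of $\Psi$ over $\QQ$, contradicting irreducibility. This simultaneously establishes the ``in particular'' clause and the first clause, since the non-integral locus of $Z_\x$ is contained in $\{\x:\rank M[\x]\leq 2\}$, i.e.\ the common vanishing locus of the $3\times 3$ minors. The vanishing of all order-$3$ minors means every $\psi$ in the linear span $W=\mathrm{span}(\psi_1,\ldots,\psi_v)$ has rank $\leq 2$ as a quadratic form in $\y$. Non-degeneracy of $\Psi$ in $\x$ gives $\dim W=v\geq 4$, and over $\overline{\QQ}$ every $\psi_i$ factors as $\psi_i=L_iL_i'$ for linear forms $L_i,L_i'\in \overline{\QQ}[\y]$ (possibly equal).

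The key step is a pencil analysis. For any $\psi_i,\psi_j\in W$, the pencil $\psi_i+t\psi_j$ stays in $W$, so every member has rank $\leq 2$. Expanding the rank condition in a basis of $\mathrm{span}(L_i,L_i',L_j,L_j')$ shows only two possibilities can occur: either (a) all four linear forms lie in a common $2$-dimensional subspace, or (b) the pairs $\{L_i,L_i'\}$ and $\{L_j,L_j'\}$ share a common element. If (a) held for every pair in $W$, then $W$ would lie inside the symmetric square of a $2$-dimensional subspace of linear forms, a $3$-dimensional space, contradicting $\dim W\geq 4$. Hence some pair in $W$ satisfies (b).

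A Helly-type combinatorial argument on the family of pairs $S_i=\{L_i,L_i'\}$ then produces a global common element: among $v\geq 4$ distinct pairwise-intersecting $2$-element sets, the only obstruction to the existence of a common element is the ``triangle'' $\{\{a,b\},\{a,c\},\{b,c\}\}$, but any fourth distinct pairwise-intersecting pair is forced to coincide with one of the three, so with $v\geq 4$ distinct pairs a common linear form $L\in\overline{\QQ}[\y]$ must divide every $\psi_i$. A Galois descent then upgrades $L$ to a $\QQ$-rational form: if a Galois conjugate $\sigma(L)$ were non-proportional to $L$ and also a common factor, then each $\psi_i$ would be a scalar multiple of the degree-$2$ form $L\sigma(L)$, contradicting $\dim W\geq 2$; hence $L\in\QQ[\y]$ up to scalar, yielding the factorization $\Psi(\x,\y)=L(\y)\sum_{i=1}^v x_il_i(\y)$ over $\QQ$, which contradicts irreducibility. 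The main obstacle I anticipate is the careful handling of boundary cases of the pencil and Helly arguments---particularly when some $\psi_i$ has rank $1$ (so its ``pair'' is a doubled point), and when type (a) and type (b) pairs coexist within the same family---together with the verification that $v\geq 4$ is exactly the bound ruling out the triangle configuration.
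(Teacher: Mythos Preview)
Your approach is genuinely different from the paper's. The paper argues scheme-theoretically: it observes that the generic fibre of the projection $Z\to\AA^v$ is $\mathrm{Spec}\,K[\y]/(\Psi)$ with $K=\QQ(x_1,\ldots,x_v)$, invokes an earlier result to see this is geometrically integral, and then cites general constructibility/openness of geometric integrality of fibres (Stacks Project) to conclude that the non-integral locus is contained in a proper closed subset; the existence of a non-vanishing $3\times 3$ minor is then read off from a separate lemma on reducible quadrics. Your route is elementary linear algebra over $\overline{\QQ}$, aiming to classify subspaces $W\subset\mathrm{Sym}^2(V^*)$ in which every element has rank $\le 2$. That classification is indeed correct and yields the lemma, and it has the advantage of being self-contained and of identifying the exceptional set explicitly as the rank-$\le 2$ locus.

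However, there is a real gap in the plan, precisely at the point you flag as an anticipated obstacle. From ``not every pair is of type (a)'' you correctly deduce ``some pair is of type (b)'', but the Helly step you invoke next requires that \emph{every} pair $S_i,S_j$ intersect, i.e.\ that every pair of basis elements is of type (b). Nothing in the argument bridges this; types (a) and (b) can genuinely coexist for different pairs, and you have not excluded that. The clean fix bypasses Helly entirely: fix a rank-$2$ element $\psi_0=y_1y_2\in W$ (one exists since a linear space of rank-$\le 1$ forms has dimension $\le 1$). Your pencil dichotomy, applied to the pair $(\psi_0,\psi)$ for an arbitrary $\psi\in W$, shows $\psi\in W_a\cup W_1\cup W_2$ where $W_a=W\cap\mathrm{Sym}^2(\mathrm{span}(y_1,y_2))$, $W_1=\{\psi\in W: y_1\mid\psi\}$, $W_2=\{\psi\in W: y_2\mid\psi\}$. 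These are three linear subspaces whose union is $W$; over an infinite field this forces $W$ to equal one of them. Since $\dim W_a\le 3<\dim W$, we get $W=W_1$ or $W=W_2$, giving the common factor directly. With this correction (and your Galois-descent step, which is fine), the elementary argument goes through.
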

\begin{proof}
Let $Z \subset \AA^{m+v}$ denote the variety cut out by $\sum_{i=1}^vx_i\psi_i(\y) = 0$. Let $\pi: Z \to \AA^v$ denote the natural projection map $(\x,\y) \to \x$. It is clear that $\pi$ is a morphism of finite type with fibres $Z_{\x}$. Let 
$\eta = (0)$ denote the generic point of $\AA^v$. Then the generic fibre $Z_{\eta}$ is nothing but $\spec \frac{K[\y]}{(F(\x,\y))},$ where $K = \QQ(x_1,\ldots,x_v)$. Recall 
that a scheme is integral if and only if it is reduced and irreducible. We have by Proposition~\ref{lemmairrfnfd} that $Z_{\eta}$ is geometrically integral. As a result, the first assertion of the lemma follows from~\cite[\href{https://stacks.math.columbia.edu/tag/0559}{Tag 0559}]{stacks} and~\cite[\href{https://stacks.math.columbia.edu/tag/0578}{Tag 0578}]{stacks} applied to $\pi$, since $\AA^v$ is irreducible. The second statement of the lemma follows from Lemma~\ref{quadred}. 
\end{proof}

Let $M[\x]$ denote the matrix associated with the quadratic form $\sum_{i=1}^5 x_iQ_i(\y)$. Let $\left\{\phi_1(\x),\ldots,\phi_J(\x)\right\}$ denote the set of minors of $M[\x]$ that do not vanish identically. This set is non-empty by Lemma~\ref{lemmaschemey}. We will now discuss two separate cases: 
\begin{enumerate}
\item The minors $\phi_i(\x)$ have no common linear factor.
\item $l(\x) \mid \phi_i(\x)$ for each $1 \leq i \leq J$ and $l(\x)$ is a linear form. 
\end{enumerate}
\subsubsection{Case 1: The minors $\phi_i(\x)$ have no common linear factor}\label{case1}
Let $1 \leq Y \leq B$ be a parameter we will choose later. Since $F(\x,\y)$ is irreducible, it follows from Lemma~\ref{lemmacharacterisation} that $Q_i(\y)$ have no common factor. 

Let $\mathcal{C}(Y)$ denote the set constructed in Lemma~\ref{lemmalinwt} and let $\y \in \mathcal{C}(Y)$. Then 
$\gcd(Q_1(\y),\ldots,Q_5(\y)) \ll_C 1$. As a result, if $p \mid \y$, then $p^2 \mid \gcd(Q_1(\y),\ldots,Q_5(\y)) \ll_C 1$. This shows that for any
$\y \in \mathcal{C}(Y)$, we have that $\gcd(y_1,\ldots,y_{n-5}) \ll_C 1$. Therefore if we let $g(\y) = \gcd(y_1,\ldots,y_{n-5})$, we have
\begin{equation}\label{eq:nbnyb}
N(B) \geq \sum_{\y \in \mathcal{C}(Y)}N_{\y,g(\y)}(B),
\end{equation}
with $N_{\y,g(\y)}(B)$ as in~\eqref{eq:nylbdef} and $g(\y) \ll_C 1$.

Let
\begin{equation}\label{eq:latticelambday}
\Lambda_{\y} = \left\{\x \in \ZZ^5:\sum_{i=1}^5 x_iQ_i(\y) = 0\right\}.
\end{equation} 
Then $\Lambda_{\y}$ is a lattice in $\RR^5$. If $\y \in \mathcal{C}(Y)$, as $Q_i(\y) \gg Y^2$ for some $i$, we get that $\rank \Lambda_{\y} = 4$. Let $G(\y) = \sum_{i=1}^5 Q_i(\y)^2$. Then we also have
\begin{align*}
\det \Lambda_{\y} &= \frac{\sqrt{G(\y)}}{\gcd(Q_1(\y),\ldots,Q_5(\y))}\\
&\gg Y^2.
\end{align*}
As a result, if $\y \in \mathcal{C}(Y)$ then $|R(\y)|/\det \Lambda_\y \ll B^{1-\eta}$. Therefore, appealing to Corollary~\ref{corlineq} and
noting that $g(\y) \ll 1$, we get that
\begin{align*}
N_{\y,g(\y)}(B) &\gg \frac{B^4}{\det \Lambda_\y} + O\left(\sum_{j=0}^3\frac{B^j}{\lambda_1(\y)^j}\right),
\end{align*}
where $\lambda_1(\y)\geq 1$ is the first successive minimum of the lattice $\Lambda_\y$. 

As 
$\lambda_1(\y) \ll (\det \Lambda_\y)^{1/4} \ll Y^{1/2} \ll B^{1/2-\eta/2}$, we see that $B^j/\lambda_1(\y)^j \leq B^3/\lambda_1(\y)^3$ for 
each $0 \leq j \leq 2$. Therefore, we have 
\begin{align*}
N_{\y,g(\y)}(B) &\gg \frac{B^4}{\det \Lambda_\y} + O\left(\frac{B^3}{\lambda_1(\y)^3}\right).
\end{align*}
Summing over vectors $\y \in \mathcal{C}(Y)$, we get that
\begin{equation}\label{eq:mtprop2case1}
\begin{split}
N(B) &\geq \sum_{\y \in \mathcal{C}(Y)}N_{\y}(B) \\
&\gg B^4Y^{-2}\sum_{\y \in \mathcal{C}(Y)} 1 + O\left(\sum_{\substack{|\y|\leq Y \\ \rank \Lambda_\y = 4}}\frac{B^3}{\lambda_1(\y)^3}\right)  \\
&\gg B^4Y^{n-7} + O\left(\sum_{\substack{|\y|\leq Y \\ \rank \Lambda_\y = 4}}\frac{B^3}{\lambda_1(\y)^3}\right) 
\end{split}
\end{equation}
by Lemma~\ref{lemmalinwt}. Let
\begin{equation}\label{eq:etprop2case1}
ET = \sum_{\substack{|\y|\leq Y \\ \rank \Lambda_\y = 4}}\frac{B^3}{\lambda_1(\y)^3}.
\end{equation}
To estimate $ET$, we will require the following lemma, which is an effective version of Hilbert's irreducibility theorem.
\begin{lemma}\label{lemmairreducibility}
Suppose that $F(\x,\y)$ satisfies the hypotheses in the statement of Proposition~\ref{thmbb}. Let $\left\{\phi_1(\x),\ldots,\phi_J(\x)\right\}$ be the set of minors of order $3$ of $M[\x]$ that do not vanish identically. Suppose that $\phi_i(\x)$ have no common linear factors. Then for $\ve > 0$ we have
$$
\#\left\{|\x| \leq R: \sum_{i=1}^5 x_iQ_i(\y) \text{ is reducible over } \QQ\right\} \ll_{\ve} R^{3+\ve}.
$$
\end{lemma}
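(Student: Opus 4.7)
The plan is to translate reducibility of the quadratic form $Q_\x(\y):=\sum_{i=1}^5 x_i Q_i(\y)$ over $\QQ$ into membership of $\x$ in a low-dimensional subvariety of $\AA^5$, and then invoke a quantitative integer-point bound on that subvariety.

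First, I note that reducibility of $Q_\x(\y)$ over $\QQ$ forces its rank in the $\y$ variables to be at most $2$, since a quadratic form of rank $\geq 3$ is absolutely irreducible. The rank-$\leq 2$ condition is precisely the simultaneous vanishing of all $3\times 3$ minors of $M[\x]$, so the reducible locus is contained in $V := V(\phi_1,\ldots,\phi_J) \subset \AA^5$.

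The crux is to show $\dim V \leq 3$. Since $\QQ[\x]$ is a UFD, by Krull's principal ideal theorem it suffices to check that the $\phi_i$ have no common irreducible factor. As $\deg\phi_i=3$ and the hypothesis rules out common linear factors, the only remaining possibility is a common irreducible factor $g(\x)$ of degree $2$ or $3$. I would eliminate this case by a Gauss's-lemma / lifting argument: on the hypersurface $V(g)\subset\AA^5$ the form $Q_\x(\y)$ has rank at most $2$, and hence factors as $L_1(\y)L_2(\y)$ over the function field $\QQ(V(g))$, possibly after a quadratic extension. Combined with the linearity of $F(\x,\y)$ in $\x$ and the non-degeneracy of $F$ in the $x_i$, this factorization should lift to an honest factorization of $F(\x,\y)$ in $\QQ[\x,\y]$, contradicting the irreducibility of $F$.

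Once $\dim V \leq 3$ is established, I would conclude by applying a standard integer-point bound on a $3$-dimensional affine variety in $\AA^5$, such as Broberg's theorem or an affine dimension-growth estimate, to obtain
\[
\#\bigl\{\x\in V\cap\ZZ^5 : |\x|\leq R\bigr\} \ll_\ve R^{3+\ve},
\]
from which the lemma follows since the reducible locus is contained in $V$. The main obstacle is the lifting step ruling out common factors of degree $\geq 2$; a fallback, should the factorization on $V(g)$ descend only over a quadratic extension of $\QQ(V(g))$, would be to treat reducibility within $V(g)$ as the further condition that a specific discriminant polynomial in $\x$ takes a $\QQ$-square value, counted by a standard effective Hilbert-irreducibility estimate for quadratic specializations.
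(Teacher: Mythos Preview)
Your reduction of reducibility to the rank-$\leq 2$ locus $V = V(\phi_1,\ldots,\phi_J)$ is correct, as is the observation that $\gcd(\phi_i)=1$ would force $\operatorname{codim} V \geq 2$ and hence $\dim V \leq 3$. The gap is in the lifting step you use to exclude a common irreducible factor $g$ of degree $2$ or $3$. If $g \mid \phi_i$ for all $i$, then over $K=\QQ(V(g))$ the form $Q_\x(\y)$ indeed has rank $\leq 2$ and factors over $K$ or a quadratic extension; but this only says that the restriction of $F(\x,\y)$ to the hypersurface $V(g)\times\AA^{n-5}$ becomes reducible. Irreducibility of $F$ in $\QQ[\x,\y]$ is a global statement and is in no way contradicted by reducibility on a proper closed subset of $\x$-space, so no contradiction with the hypotheses of Theorem~\ref{thmbb} arises. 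Consequently you have not excluded a common factor of degree $\geq 2$, and your fallback---controlling a discriminant-is-a-square condition inside the $4$-dimensional variety $V(g)$---would require substantially more than you indicate.

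The paper sidesteps this entirely by never claiming $\dim V \leq 3$. It splits into two cases. If some order-$3$ minor $\Delta(\x)$ is irreducible over $\QQ$, then the reducible locus lies in the single irreducible cubic hypersurface $\{\Delta=0\}\subset\AA^5$, and the bound $O_\ve(R^{3+\ve})$ follows directly from the Browning--Heath-Brown estimate \cite[Theorem~3]{BHB05} for integer points on an irreducible hypersurface of degree $\geq 2$; the crucial point is that this already saves a power of $R$ over the trivial $O(R^4)$ bound, so one does not need $\dim V\leq 3$. If instead every minor is reducible, then each contains a linear factor, and the hypothesis of no \emph{common} linear factor lets one choose two minors $\Delta_1,\Delta_2$ whose linear factors do not divide the other minor, which is then used to argue that $\{\Delta_1=\Delta_2=0\}$ has codimension $2$. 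The key input you are missing is the point-counting theorem for a single irreducible cubic hypersurface, which makes the whole question of common higher-degree factors irrelevant.
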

\begin{proof}
We have by Lemma~\ref{lemmaschemey} that the set of $\x \in \AA^5$ for which $\sum_{i=1}^5x_iQ_i(\y)$ is reducible is contained
in a proper subvariety in $\AA^5$. For any such $\x$, Lemma~\ref{quadred} ensures that $\rank M[\x] \leq 2$. Therefore, if $\x \in \AA^5$ 
such that $\sum_{i=1}^5 x_iQ_i(\y)$ is reducible, then $\phi(\x) = 0$, for each minor $\phi$ of order $3$ in $M[\x]$. 

Suppose first that there exists a minor $\Delta(\x)$ of order $3$ that is irreducible over $\QQ$. Then an upper bound 
for the number of $|\x| \leq R$ such that $\sum_{i=1}^5x_iQ_i(\y)$ is reducible is given by 
$
\#\left\{|\x| \leq R: \Delta(\x) = 0 \right\},
$
which is $O_{\ve}(R^{3+\ve})$, for any $\ve > 0$, by~\cite[Theorem 3]{BHB05}.

Suppose next that we are in the case where all the minors $\phi_i$ have a linear factor.
Then by our assumption that $\phi_i$ have no common factor, there exist at least two minors of order $3$, 
$\Delta_1(\x) = l_1(\x)q_1(\x)$ and $\Delta_2(\x) = l_2(\x)q_2(\x)$, say, with $l_i(\x)$ non-constant. In this case, the set of $\x \in \AA^5$ such that $\sum_{i=1}^5x_iQ_i(\y)$ is reducible lies in
$$
\mathcal{X} = \left\{\x \in \AA^5: \Delta_1(\x) = \Delta_2(\x) = 0 \right\}.
$$
Using the fact that $l_1(\x) \nmid \Delta_2(\x)$ and $l_2(\x) \nmid \Delta_1(\x)$, we get that $\mathcal{X}$ lies in a codimension $2$ subvariety in $\AA^5$. This completes the proof of the lemma.
\end{proof}
As $\rank \Lambda_\y = 4$ for each $\y$ in~\eqref{eq:etprop2case1}, we have that 
$$
\lambda_1(\y) \ll (\det \Lambda_\y)^{1/4} \ll Y^{1/2}.
$$ Therefore we may write
\begin{align*}
ET &= B^3\sum_{\substack{1 \leq R \ll Y^{1/2} \\ R \text{ dyadic}}}R^{-3}\sum_{\substack{|\y|\leq Y \\ \text{such that } \\ \rank \Lambda_\y = 4 \\ \text{ and} \\ R \leq \lambda_1(\y) \leq 2R}}1 \\
&\ll B^3\sum_{\substack{1 \leq R \ll Y^{1/2} \\ R \text{ dyadic}}}R^{-3}\sum_{|\y| \leq Y}\sum_{\substack{|\x| \leq 2R \\ \x \in \Lambda_\y}}1 \\
&\ll B^3\sum_{\substack{1 \leq R \ll Y^{1/2} \\ R \text{ dyadic}}}R^{-3}\sum_{|\x| \leq 2R}\sum_{\substack{|\y| \leq Y \\ \sum_{i=1}^5 x_iQ_i(\y) = 0}} 1.
\end{align*}
By Lemma~\ref{lemmairreducibility} and~\cite[Theorem 2]{HB02} we get
\begin{align*}
ET &\ll_{\ve} B^3\sum_{\substack{1 \leq R \ll Y^{1/2} \\ R \text{ dyadic}}}R^{-3}\left\{R^5Y^{n-7+\ve} + R^3Y^{n-6+\ve}\right\} \\
&\ll_{\ve} B^3Y^{n-6+\ve},
\end{align*}
for any fixed $\ve > 0$. By setting $Y = B^{1-2\ve}$ the first case of Proposition~\ref{thmbb} follows from~\eqref{eq:mtprop2case1}.

\subsubsection{Case 2: The minors $\phi_i(\x)$ are divisible by a common linear factor}
Let $l(\x)$ be a linear form in $\x$. Let $Z_l \subset \AA^n$ be the variety defined by $ F(\x,\y) = l(\x) = 0$. Make the change of variables $l(\x) = z_5$. Then we get
$$
F(\x,\y) = m_1(\mathbf{z})Q_1(\y) + \ldots + m_5(\mathbf{z})Q_5(\y)
$$
for linear forms $m_1(\mathbf{z}),\ldots,m_5(\mathbf{z})$. Let $m_i(\z) = \sum_{j=1}^5\tau_{i,j}z_j$ and set
$$
\tau_i(z_1,\ldots,z_4) = m_i(z_1,\ldots,z_4,0) = \sum_{j=1}^4 \tau_{i,j}z_j.
$$ 
Let
\begin{align*}
F_l(\z,\y) = \sum_{i=1}^5\tau_i(\z)Q_i(\y) = \sum_{j=1}^4z_j\sum_{i=1}^5 \tau_{i,j}Q_i(\y) = \sum_{j=1}^4z_jG_j(\y).
\end{align*}
Then we find that 
\begin{align*}
Z_l \subset \AA^4 \times \AA^{n-5} &: F_l(\z,\y) = 0.
\end{align*}

Suppose first that $Z_l$ is irreducible. Since $Z_l$ is non-degenerate in $z_i$, we get from Lemma~\ref{lemmaschemey} that
get that the set 
$$
\left\{\z \in \AA^4: F_l(\z,\y) \text{ is reducible over }\QQ\right\}
$$
is contained in a proper subvariety of $\AA^4$. Taking $l(\x)$ to be the linear factor that divides every minor of order $3$ in $M[\x]$, this in turn implies that the set
$$
\left\{\x \in \AA^5: F(\x,\y) \text{ is reducible over }\QQ\right\}
$$
is contained in a codimension $2$ subvariety in $\AA^5$. As a result, we get that
$$
\#\left\{|\x| \leq R: \sum_{i=1}^5 x_iQ_i(\y) \text{ is reducible over } \QQ\right\} \ll R^{3}.
$$
Using the above estimate in place of Lemma~\ref{lemmairreducibility}, we may argue as in Section~\ref{case1} to conclude that $N(B) \gg_{\ve} B^{n-3-\ve}$ if $Z_l$ is irreducible.

Turning to the case where $Z_l$ is reducible, we find that 
\begin{align*}
F_l(\z,\y) = t(\y)\sum_{j=1}^4z_jg_j(\y),
\end{align*}
for linear forms $t(\y)$ and $g_j(\y)$. Moreover, we have
\begin{align*}
F(\x,\y) &= \sum_{i=1}^5\tau_i(\z)Q_i(\y) + z_5\sum_{i=1}^5\tau_{i,5}Q_i(\y) \\
&= F_l(\z,\y) + z_5\sum_{i=1}^5\tau_{i,5}Q_i(\y) \\
&= t(\y)\sum_{j=1}^4g_j(\y)z_j + z_5\sum_{i=1}^5\tau_{i,5}Q_i(\y)
\end{align*}
Therefore, by making a change of variables, we may suppose that 
$$
F(\x,\y) = y_1\sum_{i=1}^4 z_il_i(\y) + z_5Q(\y),
$$
for certain linear forms $l_i(\y)$ that have no common factor, and a non-zero quadratic form $Q(\y)$.

We proceed once again as in Section~\ref{case1}. Let $\eta > 0$ and $1 \leq Y \leq B^{1-\eta}$. Let $\mathcal{C}(Y)$ be as in 
Lemma~\ref{lemmalinwt} and let $\Lambda_\y$ be as in~\eqref{eq:latticelambday}. Then we get from~\eqref{eq:nbnyb} 
and~\eqref{eq:mtprop2case1} that
\begin{align}\label{eq:nbprop2case2}
N(B) \gg \sum_{\y \in \mathcal{C}(Y)}\frac{B^4}{\det \Lambda_\y} + 
O\left(\sum_{\substack{|\y| \leq Y \\ \rank \Lambda_{\y} = 4}}\frac{B^3}{\lambda_1(\y)^3}\right)
\end{align}
where $\lambda_1(\y)$ is the length of the shortest non-zero vector in $\Lambda_\y$. Note that $\lambda_1(\y) \ll (\det \Lambda_\y)^{1/4} \ll Y^{1/2}$.
Arguing as before, we get from Lemma~\ref{lemmalinwt} that 
\begin{align}\label{eq:mtprop2case2}
\sum_{\y \in \mathcal{C}(Y)}\frac{B^4}{\det \Lambda_\y} \gg B^4Y^{n-7}.
\end{align}

Moving on to estimating the error term in~\eqref{eq:nbprop2case2}, define the lattice
$$
\widetilde{\Lambda}_\y = \left\{\x \in \ZZ^4 : \sum_{i=1}^4x_il_i(\y) = 0\right\}.
$$
If $l_i(\y) \neq 0$, for some $i$, we see that $\widetilde{\Lambda}_\y$ is a sublattice of $\Lambda_\y$ of rank $3$. Observe that if $\x = (x_1,\ldots,x_5) \in \Lambda_\y$, then $(x_1,\ldots,x_4) \in \widetilde{\Lambda}_\y$ as long as $y_1 \neq 0$.

Suppose that $R \leq \lambda_1(\y) \leq 2R$ for some $R \geq 1$. Then there exists a vector $\x$ in the annulus $R \leq |\x|_2 \leq 2R$ such that $\x$ has the least length in $\Lambda_\y$. Then if $y_1 \neq 0$, we find that $\z = (x_1,\ldots,x_4)\in \widetilde{\Lambda}_\y$, which has shorter norm. So the shortest non-zero vector in $\Lambda_\y$ must, in fact, lie in $\widetilde{\Lambda}_\y$. Note that the length of the shortest integer vector in $\widetilde{\Lambda}_\y$ is bounded by $Y^{1/r},$ where $r = \rank \widetilde{\Lambda}_\y$. As a result, if we let
$$
ET = B^3\sum_{\substack{|\y| \leq Y \\ \rank \Lambda_{\y} = 4}}\frac{1}{\lambda_1(\y)^3},
$$
we must have
\begin{align*}
ET &\ll B^3\sum_{\substack{1 \leq R \ll Y^{1/3} \\ Y \text{ dyadic}}}\frac{1}{R^3} \sum_{\substack{|\y| \leq Y \\ y_1 \neq 0 \\ l_i(\y) \neq 0 \\ \text{for some } i}}\sum_{\substack{\x \in \widetilde{\Lambda}_\y \cap \ZZ^4_{\prim} \\R \ll |\x| \ll R}}1  \\
&\quad + B^3\sum_{\substack{|\y| \leq Y \\ \rank \Lambda_\y = 4 \\\text{either }y_1 = 0, \\ \text{ or } l_i(\y) = 0\\ \text{for each i}}}\frac{1}{\lambda_1(\y)^3} \\
&=  B^3\sum_{\substack{1 \leq R \ll Y^{1/3} \\ Y \text{ dyadic}}}\frac{1}{R^3} \sum_{\substack{\x \in \ZZ^4_{\prim} \\ R \ll |\x| \ll R}}\sum_{\substack{|\y| \leq Y \\ \sum_{i=1}^4x_il_i(\y) = 0}} 1 \,+ B^3Y^{n-6},
\end{align*}
where we have used the fact that if either $y_1 = 0$, or if $l_i(\y) = 0$ for each $i$, then $\rank \Lambda_\y \neq 4$ unless $Q(\y) \neq 0$, which forces $x_5 =0$, whence $\lambda_1(\y) = 1$. 

Since $\x \in \ZZ_{\prim}^4$ and since $l_i(\y)$ have no common linear factors, we have
$$
\#\left\{|\y| \leq Y : \sum_{i=1}^4 x_il_i(\y) = 0\right\} \ll \frac{Y^{n-6}}{|\x|_2} + Y^{n-7}.
$$
Consequently, we get
\begin{align*}
ET &\ll B^3\sum_{\substack{1 \leq R \ll Y^{1/3} \\ Y \text{ dyadic}}}\frac{1}{R^3} \sum_{\substack{\x \in \ZZ^4_{\prim} \\ R \ll |\x| \ll R}}\left(\frac{Y^{n-6}}{R} + B^3Y^{n-7}\right) + B^3Y^{n-6} \\
&\ll B^3Y^{n-6}\sum_{\substack{1 \leq R \ll Y^{1/3} \\ Y \text{ dyadic}}}\left(1+ R/Y\right) \ll_{\ve} B^3Y^{n-6+\ve},
\end{align*}
for any $\ve > 0$. Thus by setting $Y = B^{1-2\ve}$, we get from~\eqref{eq:mtprop2case2} that
$$
N(B) \gg_{\ve} B^{n-3-\ve}.
$$
This completes the proof of Theorem~\ref{thmbb}.


\setcounter{section}{-1}


\bibliographystyle{amsplain}
\bibliography{cubicbib}

\end{document}